\newtheorem{definition}{Definition}[section]
\newtheorem{theorem}[definition]{Theorem}
\newtheorem{lemma}[definition]{Lemma}
\newtheorem{corollary}[definition]{Corollary}
\newtheorem{example}[definition]{Example}
\newtheorem{conjecture}[definition]{Conjecture}
\newtheorem{problem}[definition]{Problem}
\newtheorem{note}[definition]{Note}
\newtheorem{proposition}[definition]{Proposition}
\begin{document}
\title{\bf 
A compact presentation for the \\
alternating central extension
of 
the
\\
positive part of
$U_q(\widehat{\mathfrak{sl}}_2)$ }
\author{
Paul Terwilliger 
}
\date{}

\maketitle
\begin{abstract}
This paper concerns the positive part $U^+_q$ of the quantum group  $U_q({\widehat{\mathfrak{sl}}}_2)$.
The algebra $U^+_q$ has a presentation involving two generators that satisfy the cubic $q$-Serre relations. We recently
introduced an algebra $\mathcal U^+_q$ called the alternating central extension of $U^+_q$. We presented $\mathcal U^+_q$
by generators and relations. The presentation is attractive, but the multitude of generators and relations makes
the presentation unwieldy. In this paper we obtain a presentation of $\mathcal U^+_q$
 that involves a small subset of the original set of generators and a very manageable set of relations. We call this presentation the
 compact presentation of $\mathcal U^+_q$. 

\bigskip

\noindent
{\bf Keywords}. $q$-Onsager algebra; $q$-Serre relations; $q$-shuffle algebra; tridiagonal pair.
\hfil\break
\noindent {\bf 2020 Mathematics Subject Classification}. 
Primary: 17B37. Secondary: 05E14, 81R50.

 \end{abstract}

\section{Introduction}
The algebra $U_q({\widehat{\mathfrak{sl}}}_2)$ is well known in representation theory \cite{charp} and statistical mechanics
\cite{JM}. This algebra has 
 a subalgebra $U^+_q$ called the positive 
part. The algebra $U^+_q$ has a presentation involving two generators (said to be standard) and two relations,
called the $q$-Serre relations. 
The presentation is given in Definition \ref{def:U} below.
\medskip

\noindent 
Our interest in $U^+_q$  is motivated by some applications to linear algebra and combinatorics;  these will be described shortly. Before going into detail, we have a comment about $q$.
In the applications, either $q$ is not a root of unity, or $q$ is a root of unity with exponent large enough to not interfere with the rest of the application.
To keep things simple, throughout the paper we will assume that $q$ is not a root of unity.
\medskip

\noindent
Our first application has to do with tridiagonal pairs \cite{TD00}.
 A  tridiagonal pair is roughly described as an ordered pair of diagonalizable linear maps on
a nonzero finite-dimensional vector space, that each act on the eigenspaces of the other one in a block-tridiagonal fashion \cite[Definition~1.1]{TD00}. There is a type of tridiagonal pair said to be $q$-geometric \cite[Definition~2.6]{nonnil};
for this type of tridiagonal pair the eigenvalues of each map form  a $q^2$-geometric progression. A finite-dimensional irreducible $U^+_q$-module on which the standard generators are not nilpotent,
is essentially the same thing as a tridiagonal pair of $q$-geometric type \cite[Theorem~2.7]{nonnil}; these $U^+_q$-modules are described in \cite[Section~1]{nonnil}. See \cite{bockting}, \cite{NomTerTB} for more background on tridiagonal pairs.
\medskip

\noindent
Our next application has to do with distance-regular graphs \cite{banIto}, \cite{BCN}, \cite{dkt}.
Consider a distance-regular graph $\Gamma$ that has diameter $d\geq 3$ and classical parameters $(d,b,\alpha, \beta)$  \cite[p.~193]{BCN}  with  $b=q^2$ and $\alpha = q^2-1$.
The condition on $\alpha$ implies that $\Gamma$ is formally self-dual in the sense of \cite[p.~49]{BCN}.
Let $A$ denote the adjacency matrix of $\Gamma$, and let $A^*$ denote the dual adjacency matrix with respect to any vertex of $\Gamma$ \cite[Section~7]{drg}.
Then by \cite[Lemma~9.4]{drg}, there exist complex numbers  $r, s, r^*, s^*$ with $r, r^*$ nonzero such that $rA+sI$, $r^* A^*+s^* I$ satisfy the
$q$-Serre relations. As mentioned in
\cite[Example~8.4]{drg},
 the above parameter restriction is satisfied by the
bilinear forms graph \cite[p.~280]{BCN},
the alternating forms graph \cite[p.~282]{BCN},
the Hermitean forms graph \cite[p.~285]{BCN},
the quadratic forms graph \cite[p.~290]{BCN},
the affine $E_6$ graph \cite[p.~340]{BCN},
and the extended ternary Golay code graph \cite[p.~359]{BCN}.
\medskip

\noindent Our next application has to do with uniform posets \cite{miklavic}, \cite{uniform}. Let ${\rm GF}(b)$ denote a finite field with $b$ elements, and 
let $N, M$ denote positive integers.
  Let $H$ denote a vector space over ${\rm GF}(b)$ that has dimension $N+M$. Let $h$ denote a subspace of $H$ with dimension $M$. Let $P$ denote the set of subspaces of $H$ that have zero  intersection with $h$.
  For $x,y \in P$ define $x \leq y$ whenever $x \subseteq y$. The relation $\leq $ is a partial order on $P$, and the poset $P$ is ranked with rank $N$.
  The poset $P$ is called an attenuated space poset, and denoted by $A_b(N,M)$ \cite{wenLiu}, \cite[Example~3.1]{uniform}.
  By \cite[Theorem~3.2]{uniform} the poset  $A_b(N, M)$  is uniform in the sense of \cite[Definition~2.2]{uniform}.
  It is shown in \cite[Lemma~3.3]{wenLiu} that for $A_b(N,M)$ the raising matrix $R$ and the lowering matrix $L$ satisfy the $q$-Serre relations, 
  provided that $b=q^2$.

\medskip

\noindent Our last application has to do with $q$-shuffle algebras. Let $\mathbb F$ denote a field, and let $x, y$ denote noncommuting indeterminates.
Let $V$ denote the free associative $\mathbb F$-algebra with generators $x, y$.  By a letter in $V$ we mean $x$ or $y$. For an integer $n\geq 0$, by a word of length $n$ in $V$
we mean a product of letters $v_1 v_2 \cdots v_n$. The words in $V$ form a basis for the vector space $V$.
In
\cite{rosso1, rosso} M. Rosso introduced
an algebra structure on $V$, called the
$q$-shuffle algebra.
For letters $u, v$ their
$q$-shuffle product is
$u\star v = uv+q^{\langle u,v\rangle }vu$, where
$\langle u,v\rangle =2$
(resp. $\langle u,v\rangle =-2$)
if $u=v$ (resp.
 $u\not=v$).
 By \cite[Theorem~13]{rosso1},  in the $q$-shuffle algebra $V$ the elements $x$, $y$ satisfy the $q$-Serre relations.
 Consequently there exists an algebra homomorphism  $\natural$
from $U^+_q$ into the $q$-shuffle algebra
$V$, that sends the standard generators of $U^+_q$ to $x$, $y$.
By \cite[Theorem~15]{rosso} the map $\natural $ is injective. 
\medskip

\noindent Next we recall the alternating elements in
 $U^+_q$ \cite{alternating}. 
 Let  $v_1v_2\cdots v_n$ denote a word in $V$. This word is called alternating whenever $n\geq 1$ and $v_{i-1} \not=v_i$
 for $2 \leq i \leq n$. Thus the alternating words have the form $\cdots xyxy \cdots$.
 The alternating words are displayed below:
\begin{align*}
& x, \qquad  xyx, \qquad xyxyx, \qquad xyxyxyx, \qquad \ldots
\\
&y, \qquad yxy, \qquad yxyxy,  \qquad yxyxyxy, \qquad \ldots
\\
&yx, \qquad yxyx,  \qquad yxyxyx, \qquad yxyxyxyx, \qquad  \ldots
\\
& xy, \qquad  
xyxy,\qquad  xyxyxy, \qquad xyxyxyxy, \qquad \ldots
\end{align*}
\noindent 
By \cite[Theorem~8.3]{alternating} each alternating word is contained in the image of $\natural$. An element of $U^+_q$
is called alternating whenever it is the 
$\natural$-preimage of an alternating word. For example, the standard generators of $U^+_q$ are alternating because they are the
$\natural$-preimages of the alternating words $x,y$. 
It is shown in \cite[Lemma~5.12]{alternating} that for each row in the above display, the corresponding alternating elements mutually commute. 
A naming scheme for alternating elements is introduced in  \cite[Definition~5.2]{alternating}.
\medskip


\noindent Next we recall the alternating central extension of $U^+_q$ \cite{altCE}. In \cite{alternating} we 
displayed two types of relations among
the alternating elements of $U^+_q$; the first type is \cite[Propositions~5.7,~5.10,~5.11]{alternating} and the second type is
\cite[Propositions~6.3,~8.1]{alternating}. 
The relations in \cite[Proposition~5.11]{alternating} are redundant;
they follow from the relations in \cite[Propositions~5.7,~5.10]{alternating} as pointed out in \cite[Propositions~3.1, 3.2]{baspp} and \cite[Remark~2.5]{basFMA};
see also Corollary \ref{cor:redundant} below.
The relations in \cite[Proposition 6.3]{alternating} are also redundant; they follow from
the relations in \cite[Propositions~5.7,~5.10]{alternating} as shown in the proof of \cite[Proposition 6.3]{alternating}.
By \cite[Lemma~8.4]{alternating} and the previous comments,  the algebra $U^+_q$ is presented by
its alternating elements and the relations in \cite[Propositions~5.7,~5.10,~8.1]{alternating}. 
For this presentation it is natural to ask what happens if the
 relations in \cite[Proposition~8.1]{alternating} are removed. To answer this question, in \cite[Definition~3.1]{altCE}
we defined an algebra $\mathcal U^+_q$ by generators and relations in the following way. The generators, said to be alternating,
are in bijection with the alternating elements of $U^+_q$. The relations are the ones in \cite[Propositions~5.7, 5.10]{alternating}. By construction there
exists a surjective algebra homomorphism  $\mathcal U^+_q\to U^+_q$ that sends each alternating generator of $\mathcal U^+_q$
to the corresponding alternating element of $U^+_q$. In \cite[Lemma~3.6, Theorem~5.17]{altCE} we adjusted this homomorphism to get an algebra isomorphism
$\mathcal U^+_q \to U^+_q \otimes \mathbb F \lbrack z_1, z_2, \ldots \rbrack$, where
$\lbrace z_n\rbrace_{n=1}^\infty $ are mutually commuting indeterminates. By \cite[Theorem~10.2]{altCE} the alternating generators  form a PBW basis
for $\mathcal U^+_q$. The algebra $\mathcal U^+_q$ is called the alternating central extension of $U^+_q$.
\medskip

\noindent 
We mentioned above that the algebra  $\mathcal U^+_q$ is presented by its alternating generators and the relations in \cite[Propositions~5.7,~5.10]{alternating}. 
This presentation is attractive, but the
 multitude of generators and relations makes the presentation unwieldy. In this paper we obtain a presentation of $\mathcal U^+_q$
 that involves a small subset of the original set of generators and a very manageable set of relations. This presentation is given in Definition \ref{def:CE}
  below; we call it the
 compact presentation of $\mathcal U^+_q$. At first glance, it is not clear that the algebra presented in Definition \ref{def:CE} is equal to $\mathcal U^+_q$.
 So we denote by $\mathcal U$ the algebra presented in Definition \ref{def:CE}, and eventually prove that $\mathcal U = \mathcal U^+_q$.
 After this result is established, we describe some features of $\mathcal U^+_q$ that are illuminated by the presentation in Definition \ref{def:CE}. 
\medskip

\noindent Our investigation of $\mathcal U^+_q$ is inspired by some recent developments in statistical mechanics, concerning the $q$-Onsager algebra $O_q$.
In \cite{BK05} Baseilhac and Koizumi introduce a current algebra $\mathcal A_q$ for $ O_q$, in order to solve boundary integrable systems with hidden symmetries.
In \cite[Definition~3.1]{basnc} Baseilhac and Shigechi give a presentation of $\mathcal A_q$ by generators and relations. This presentation and the discussion in \cite[Section~4]{basnc} suggest that $\mathcal A_q$
is related to $O_q$ in roughly the same way that $\mathcal U^+_q$
is related to $U^+_q$. 
The relationship between $\mathcal A_q$ and $ O_q$ was conjectured in
\cite[Conjectures~1,~2]{basBel} and
 \cite[Conjectures~4.5,~4.6,~4.8]{z2z2z2}, before being settled in \cite[Theorems~9.14, 10.2, 10.3, 10.4]{pbwqO}.
The articles 
\cite{bas2,
bas1,
basXXZ,
basBel,
BK05,
bas4,
basKoi,
BK,
basnc}
contain background information on $ O_q$ and $\mathcal A_q$.
\medskip

\noindent Earlier in this section, we indicated how  $U^+_q$ has applications to tridiagonal pairs, distance-regular graphs, and uniform posets. Possibly $\mathcal U^+_q$
appears in these applications,  and this  possibility should be investigated in the future.
\medskip

\noindent This paper is organized as follows. In Section 2 we review some facts about $U^+_q$.
In Section 3, we introduce the algebra $\mathcal U$ and give an algebra homomorphism $U^+_q \to \mathcal U$.
In Section 4, we introduce the alternating generators for $\mathcal U$ and establish some formulas involving these generators.
In Sections 5, 6 we use these formulas and generating functions to show that the alternating generators for $\mathcal U$ satisfy the relations in
\cite[Propositions~5.7,~5.10]{alternating}. Using this result, we prove that $\mathcal  U= \mathcal U^+_q$. Theorem \ref{thm:Fin} and Corollary \ref{cor:altExt} are the main results of the paper.
In Section 7 we describe some features of $\mathcal U^+_q$ that are illuminated by the presentation in Definition \ref{def:CE}. Appendix A contains a list of relations involving the generating
functions from Section 5.

\section{The algebra $U^+_q$} We now begin our formal argument.
For the rest of the paper, the following notational conventions are in effect.
Recall the natural numbers 
$\mathbb N =\lbrace 0,1,2,\ldots \rbrace$.
 Let $\mathbb F$ denote a field. Every vector space and tensor product 
 mentioned is over $\mathbb F$. Every algebra mentioned is associative, over $\mathbb F$, and 
has a multiplicative identity. Fix a nonzero $q \in \mathbb F$
that is not a root of unity.
Recall the notation
\begin{eqnarray*}
\lbrack n\rbrack_q = \frac{q^n-q^{-n}}{q-q^{-1}}
\qquad \qquad n \in \mathbb N.
\end{eqnarray*}
\noindent For elements $X, Y$ in any algebra, define their
commutator and $q$-commutator by 
\begin{align*}
\lbrack X, Y \rbrack = XY-YX, \qquad \qquad
\lbrack X, Y \rbrack_q = q XY- q^{-1}YX.
\end{align*}
\noindent Note that 
\begin{align*}
\lbrack X, \lbrack X, \lbrack X, Y\rbrack_q \rbrack_{q^{-1}} \rbrack
= 
X^3Y-\lbrack 3\rbrack_q X^2YX+ 
\lbrack 3\rbrack_q XYX^2 -YX^3.
\end{align*}

\begin{definition} \label{def:U}
\rm (See \cite[Corollary~3.2.6]{lusztig}.)
Define the algebra $U^+_q$ by generators $W_0$, $W_1$ and relations
\begin{align}
\label{eq:qSerre1}
\lbrack W_0, \lbrack W_0, \lbrack W_0, W_1\rbrack_q \rbrack_{q^{-1}} \rbrack =0,
\qquad \qquad 
\lbrack W_1, \lbrack W_1, \lbrack W_1, W_0\rbrack_q \rbrack_{q^{-1}}\rbrack =0.
\end{align}
We call $U^+_q$ the {\it positive part of $U_q({\widehat{\mathfrak{sl}}}_2)$}.
The generators $W_0, W_1$ are called {\it standard}.
The relations (\ref{eq:qSerre1}) are called the {\it $q$-Serre relations}.
\end{definition}
\noindent We will use the following concept.
\begin{definition}\rm 
(See \cite[p.~299]{damiani}.)
Let $ \mathcal A$ denote an algebra. A {\it Poincar\'e-Birkhoff-Witt} (or {\it PBW}) basis for $\mathcal A$
consists of a subset $\Omega \subseteq \mathcal A$ and a linear order $<$ on $\Omega$
such that the following is a basis for the vector space $\mathcal A$:
\begin{align*}
a_1 a_2 \cdots a_n \qquad n \in \mathbb N, \qquad a_1, a_2, \ldots, a_n \in \Omega, \qquad
a_1 \leq a_2 \leq \cdots \leq a_n.
\end{align*}
We interpret the empty product as the multiplicative identity in $\mathcal A$.
\end{definition}

\noindent In \cite[p.~299]{damiani} Damiani obtains a PBW basis for $U^+_q$ that involves some elements
\begin{align}
\lbrace E_{n \delta+ \alpha_0} \rbrace_{n=0}^\infty,
\qquad \quad 
\lbrace E_{n \delta+ \alpha_1} \rbrace_{n=0}^\infty,
\qquad \quad 
\lbrace E_{n \delta} \rbrace_{n=1}^\infty.
\label{eq:Upbw}
\end{align}
These elements are defined recursively as follows:
\begin{align}
E_{\alpha_0}= W_0, \qquad \quad
E_{\alpha_1} = W_1, \qquad \quad
E_\delta = q^{-2} W_1 W_0 - W_0 W_1
\label{eq:dam1}
\end{align}
and for $n\geq 1$,
\begin{align}
\label{eq:dam2}
E_{n \delta + \alpha_0} &= \frac{\lbrack E_\delta, E_{(n-1)\delta + \alpha_0}\rbrack }{q+q^{-1}},
\qquad \qquad 
E_{n \delta + \alpha_1} = \frac{\lbrack  E_{(n-1)\delta + \alpha_1}, E_\delta\rbrack}{q+q^{-1}},
\\
&
E_{n \delta} = q^{-2} E_{(n-1)\delta + \alpha_1} W_0 - W_0 E_{(n-1)\delta + \alpha_1}.
\label{eq:dam3}
\end{align}

\begin{proposition}
\label{prop:damiani} {\rm (See \cite[p.~308]{damiani}.)}
 A PBW basis for $U^+_q$ is obtained by the elements {\rm (\ref{eq:Upbw})} in the linear
order
\begin{align*}
E_{\alpha_0} <
E_{\delta+ \alpha_0} < 
E_{2 \delta + \alpha_0} < 
\cdots < 
E_\delta < 
E_{2\delta} < 
E_{3\delta} < 
\cdots < 
E_{2\delta+ \alpha_1} <
E_{\delta+ \alpha_1} < 
E_{\alpha_1}.
\end{align*}
\end{proposition}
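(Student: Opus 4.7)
The plan is to follow Damiani's argument from \cite{damiani} in three stages: establish a package of commutation relations among the root vectors, use these to show that the ordered monomials span $U^+_q$, and then match graded dimensions to obtain linear independence.

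First, I would establish by induction on $n$ the following auxiliary relations among the elements in (\ref{eq:Upbw}): (a) the imaginary root vectors pairwise commute, $\lbrack E_{m\delta}, E_{n\delta}\rbrack = 0$ for $m,n\geq 1$; (b) the real root vectors of each type commute among themselves, $\lbrack E_{m\delta+\alpha_0}, E_{n\delta+\alpha_0}\rbrack = 0$ and $\lbrack E_{m\delta+\alpha_1}, E_{n\delta+\alpha_1}\rbrack = 0$; and (c) the mixed brackets $\lbrack E_\delta, E_{n\delta+\alpha_1}\rbrack$ and $\lbrack E_{m\delta+\alpha_0}, E_{n\delta+\alpha_1}\rbrack_{q^{\pm 1}}$ can be expanded as polynomials in the elements of (\ref{eq:Upbw}). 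The recursions (\ref{eq:dam1})--(\ref{eq:dam3}) give several of these identities as built-in definitions, and the $q$-Serre relations (\ref{eq:qSerre1}) propagate the rest through the induction.

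Second, once these commutation relations are in hand, spanning is routine. Introduce the $\mathbb N^2$-grading on $U^+_q$ with $\deg W_0 = (1,0)$ and $\deg W_1 = (0,1)$; the elements $E_{n\delta+\alpha_0}$, $E_{n\delta+\alpha_1}$, $E_{n\delta}$ are homogeneous of bidegrees $(n{+}1,n)$, $(n,n{+}1)$, $(n,n)$ respectively. Induction on bidegree, combined with (a)--(c), lets one straighten any monomial in $W_0,W_1$ into a linear combination of PBW monomials in the prescribed order, modulo lower-degree terms that are handled by the inductive hypothesis.

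Third, for linear independence I would invoke the known graded character of $U^+_q$ (for instance from the Hall-algebra realization via the Kronecker quiver, or from the general Kac--Moody theory of quantum affine algebras). The generating function of ordered PBW monomials in the proposed root vectors agrees with this character in each bidegree, so the spanning established above forces independence.

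The main obstacle is stage one, specifically the inductive verification of (a) (commutativity of the imaginary root vectors) together with the precise form of the mixed brackets in (c); this is the technical heart of Damiani's argument and requires careful iterated use of the $q$-Serre relations. Once that package is in place, the spanning and dimension-counting steps are essentially formal.
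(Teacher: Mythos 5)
The paper offers no proof of this proposition: it is quoted verbatim from Damiani \cite[p.~308]{damiani}, so there is nothing internal to compare your argument against. Judged on its own terms, your outline follows the broad shape of Damiani's argument (commutation package, straightening to get spanning, graded dimension count for independence), and your bidegree assignments $(n{+}1,n)$, $(n,n{+}1)$, $(n,n)$ are correct. However, your claim (b) is false as stated. The real root vectors of a fixed type do \emph{not} commute: as recorded in Lemma \ref{lem:com3} of this very paper, for $i>j$ one has
\begin{align*}
E_{i\delta+\alpha_0} E_{j\delta+\alpha_0} = q^{-2} E_{j\delta+\alpha_0} E_{i\delta+\alpha_0} - (q^2-q^{-2}) \sum_{\ell} q^{-2\ell} E_{(j+\ell)\delta+\alpha_0} E_{(i-\ell)\delta+\alpha_0} - \cdots,
\end{align*}
with nontrivial correction terms (and an extra square term when $i-j$ is even). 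What the straightening argument actually needs is not commutativity but the Levendorskii--Soibelman convexity property: a product taken in the wrong order equals a scalar multiple of the reversed product plus a combination of ordered monomials in root vectors lying strictly between the two in the chosen linear order. Your claim (a), that the imaginary root vectors commute, is correct (it is (\ref{eq: mutCom})), but asserting the same for the $E_{n\delta+\alpha_i}$ would contradict Lemma \ref{lem:com3} and would make the correction terms in your own stage-two induction disappear, breaking the bookkeeping.

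A secondary caution: your independence step leans on "the known graded character of $U^+_q$." That is a legitimate route (e.g.\ via the specialization $q\to 1$ or the Hall-algebra realization), but it imports a result that is itself comparable in depth to the proposition; Damiani's own independence argument proceeds instead by an inductive filtration analysis. If you intend the character input as a black box, you should say which form of it you are using and why the bigraded (not merely $\mathbb N$-graded) character is available, since the monomial count must be matched degree-by-degree in $\mathbb N\times\mathbb N$.
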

\noindent The elements 
(\ref{eq:Upbw}) satisfy many relations \cite{damiani}. We mention a few for later use.

\begin{lemma}
\label{lem:com3}
{\rm (See \cite[p.~300]{damiani}.)}
For  $i,j \in \mathbb N$ with $i>j$ the following hold in $U^+_q$.
\begin{enumerate}
\item[\rm (i)] Assume that $i-j=2r+1$ is odd. Then
\begin{align*}
&
E_{i\delta+\alpha_0}
E_{j\delta+\alpha_0}
=
q^{-2}
E_{j\delta+\alpha_0}
E_{i\delta+\alpha_0}
-
(q^2-q^{-2})\sum_{\ell=1}^{r}
q^{-2\ell}
E_{(j+\ell) \delta+\alpha_0}
E_{(i-\ell) \delta+\alpha_0},
\\
&
E_{j\delta+\alpha_1}
E_{i\delta+\alpha_1} =
q^{-2}
E_{i\delta+\alpha_1 }
E_{j\delta+\alpha_1 }
-
(q^2-q^{-2})\sum_{\ell=1}^{r}
q^{-2\ell}
E_{(i-\ell) \delta+\alpha_1}
E_{(j+\ell) \delta+\alpha_1}.
\end{align*}
\item[\rm (ii)] Assume that $i-j=2r$ is even. Then
\begin{align*}
E_{i\delta+\alpha_0}
E_{j\delta+\alpha_0}
 =
q^{-2}
E_{j\delta+\alpha_0}
&E_{i\delta+\alpha_0}
-
q^{j-i+1} (q-q^{-1}) E^2_{(r+j)\delta+\alpha_0}
\\
&-\;
(q^2-q^{-2})\sum_{\ell=1}^{r-1}
q^{-2\ell}
E_{(j+\ell) \delta+\alpha_0}
E_{(i-\ell) \delta+\alpha_0},
\\
E_{j\delta+\alpha_1}
E_{i\delta+\alpha_1} =
q^{-2}
E_{i\delta+\alpha_1 }
&
E_{j\delta+\alpha_1 }
-
q^{j-i+1} (q-q^{-1}) E^2_{(r+j)\delta+\alpha_1}
\\
&-\;
(q^2-q^{-2})\sum_{\ell=1}^{r-1}
q^{-2\ell}
E_{(i-\ell) \delta+\alpha_1}
E_{(j+\ell) \delta+\alpha_1}.
\end{align*}
\end{enumerate}
\end{lemma}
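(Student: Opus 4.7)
My plan is to prove the two $\alpha_0$ identities by induction on the gap $i - j$, and then obtain the two $\alpha_1$ identities by applying an involution. Since the $q$-Serre relations (\ref{eq:qSerre1}) are symmetric under the swap $W_0 \leftrightarrow W_1$, there is a unique algebra anti-automorphism $\tau \colon U_q^+ \to U_q^+$ with $\tau(W_0) = W_1$ and $\tau(W_1) = W_0$. A direct induction using (\ref{eq:dam1})--(\ref{eq:dam3}) shows that $\tau(E_\delta) = E_\delta$ and $\tau(E_{n\delta + \alpha_0}) = E_{n\delta + \alpha_1}$ for all $n \ge 0$. Applying $\tau$ (which reverses products) to each $\alpha_0$ identity produces the matching $\alpha_1$ identity, so I only need to prove the two claims involving the elements $E_{m\delta + \alpha_0}$.

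For the base case $i - j = 1$, I would prove $[E_{(n+1)\delta + \alpha_0}, E_{n\delta + \alpha_0}]_q = 0$ by a secondary induction on $n$. When $n = 0$, expanding $E_{\delta + \alpha_0}$ via the recursions (\ref{eq:dam1}) and (\ref{eq:dam2}) turns the claim into a scalar multiple of the first $q$-Serre relation in (\ref{eq:qSerre1}); a short computation confirms this directly. The inductive step rewrites $E_{(n+1)\delta + \alpha_0}$ as $[E_\delta, E_{n\delta + \alpha_0}]/(q + q^{-1})$, then uses the Jacobi identity applied to $E_\delta, E_{n\delta + \alpha_0}, E_{(n-1)\delta + \alpha_0}$ together with the inductive hypothesis to propagate the vanishing $q$-commutator one step.

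For the inductive step $i - j \ge 2$, I would write
\begin{align*}
(q + q^{-1}) E_{i\delta + \alpha_0} E_{j\delta + \alpha_0}
 = E_\delta E_{(i-1)\delta + \alpha_0} E_{j\delta + \alpha_0} - E_{(i-1)\delta + \alpha_0} E_\delta E_{j\delta + \alpha_0},
\end{align*}
apply the induction hypothesis (valid because $(i-1) - j < i - j$) to $E_{(i-1)\delta + \alpha_0} E_{j\delta + \alpha_0}$ in the first term, and use the relation $E_\delta E_{j\delta + \alpha_0} = E_{j\delta + \alpha_0} E_\delta + (q + q^{-1}) E_{(j+1)\delta + \alpha_0}$ (a rewriting of (\ref{eq:dam2})) to move $E_\delta$ past $E_{j\delta + \alpha_0}$ in the second term. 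After collecting, the loose $E_\delta$ factors recombine into elements of the form $E_{(k+1)\delta + \alpha_0}$, and I reinvoke the hypothesis on the resulting smaller-gap products. The accumulation produces a sum $\sum_\ell c_\ell(q) E_{(j+\ell)\delta + \alpha_0} E_{(i-\ell)\delta + \alpha_0}$ with $c_\ell(q) = -(q^2 - q^{-2}) q^{-2\ell}$. The parity split arises because when $i - j$ is even there is a unique value $\ell = r = (i - j)/2$ at which the two indices $j + \ell$ and $i - \ell$ collide, forcing a squared term $E^2_{(r+j)\delta + \alpha_0}$ whose coefficient collapses to $-q^{j-i+1}(q - q^{-1})$; when $i - j$ is odd no such collision occurs and the full sum over $1 \le \ell \le r$ survives.

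The main obstacle I anticipate is precisely this bookkeeping: verifying that the coefficients $-(q^2 - q^{-2})q^{-2\ell}$ emerge with the correct signs and powers of $q$ after the iterated reductions, and that the diagonal correction in part (ii) appears with exactly the coefficient $-q^{j-i+1}(q - q^{-1})$. Once the base case and the single-step commutator $[E_\delta, E_{n\delta + \alpha_0}] = (q + q^{-1}) E_{(n+1)\delta + \alpha_0}$ are in hand, the remainder reduces to a finite (if tedious) combinatorial manipulation that the induction forces to close.
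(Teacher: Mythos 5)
First, note that the paper does not prove this lemma at all: it is imported verbatim from Damiani \cite[p.~300]{damiani}, where it is part of the main technical theorem underlying the PBW basis and is established over several pages by a simultaneous induction involving the full system of straightening relations (including the mixed relations $[E_{i\delta+\alpha_0},E_{j\delta+\alpha_1}]_q=-qE_{(i+j+1)\delta}$, the mutual commutativity of the $E_{i\delta}$, and the commutators of the $E_{i\delta}$ with the real root vectors). So you are attempting to reprove a substantial external result, not to match an argument given in this paper. Your reduction of the $\alpha_1$ identities to the $\alpha_0$ ones is correct: the anti-automorphism $\tau$ with $\tau(W_0)=W_1$, $\tau(W_1)=W_0$ satisfies $\tau(E_\delta)=E_\delta$, hence $\tau(E_{n\delta+\alpha_0})=E_{n\delta+\alpha_1}$ by induction on $n$, and applying $\tau$ to each $\alpha_0$ identity reverses all products and yields exactly the corresponding $\alpha_1$ identity.

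The genuine gap is in your base case. You claim that $[E_{(n+1)\delta+\alpha_0},E_{n\delta+\alpha_0}]_q=0$ follows by a secondary induction on $n$ from the case $n=0$ (which is indeed equivalent to the first $q$-Serre relation) via the Jacobi identity applied to $E_\delta$, $E_{n\delta+\alpha_0}$, $E_{(n-1)\delta+\alpha_0}$. But the identity $[E_\delta,[X,Y]_q]=[[E_\delta,X],Y]_q+[X,[E_\delta,Y]]_q$ raises exactly one of the two indices at a time: applied to the gap-one relation at levels $(n,n-1)$ it yields $(q+q^{-1})\bigl([E_{(n+1)\delta+\alpha_0},E_{(n-1)\delta+\alpha_0}]_q+(q-q^{-1})E^2_{n\delta+\alpha_0}\bigr)=0$, which is the gap-two relation of part (ii) with $r=1$ --- not the gap-one relation at levels $(n+1,n)$. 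No operation available inside $U^+_q$ shifts both indices up simultaneously (that is what Lusztig's braid/translation automorphism does, and it does not preserve $U^+_q$), so starting from the single seed $[E_{\delta+\alpha_0},E_{\alpha_0}]_q=0$ your bracketing procedure never reaches $[E_{2\delta+\alpha_0},E_{\delta+\alpha_0}]_q=0$. This is not a harmless omission: already the case $(i,j)=(3,0)$ of part (i) requires precisely that relation in order to put the term $E_{2\delta+\alpha_0}E_{\delta+\alpha_0}$ produced by the bracketing into the stated order. These adjacent relations for $n\geq 1$ are the genuinely hard content of Damiani's theorem, and they require either passing to the full algebra $U_q(\widehat{\mathfrak{sl}}_2)$ with its braid group action or running Damiani's simultaneous induction over the whole family of relations. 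Your primary induction on $i-j$ and your parity analysis of the colliding index in part (ii) are a reasonable skeleton, but without the adjacent base cases the induction cannot start.
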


\begin{lemma} The following {\rm (i)--(iii)}  hold in $U^+_q$.
\begin{enumerate}
\item[\rm (i)] {\rm (See \cite[p.~307]{damiani}.)} For positive $i,j \in \mathbb N$,
\begin{align}
E_{i \delta} E_{j \delta} = E_{j \delta} E_{i \delta}.
\label{eq: mutCom}
\end{align}
\item[\rm (ii)] {\rm (See \cite[p.~307]{damiani}.)} For $i,j\in \mathbb N$,
\begin{align}
\lbrack E_{i \delta + \alpha_0}, E_{j \delta + \alpha_1} \rbrack_q = - q E_{(i+j+1) \delta}.
\label{eq:qcomE}
\end{align}
\item[\rm (iii)] For $i \in \mathbb N$,
\begin{align}
\label{eq:WEcom}
\frac{ \lbrack W_0, E_{i\delta + \alpha_0} \rbrack_q } {q-q^{-1}} &= \sum_{\ell = 0}^i E_{\ell \delta + \alpha_0} E_{(i-\ell) \delta + \alpha_0},
\\
\frac{\lbrack E_{i \delta + \alpha_1}, W_1\rbrack_q }{ q-q^{-1}} &=  \sum_{\ell=0}^i E_{(i-\ell) \delta + \alpha_1} E_{\ell \delta + \alpha_1}.
\label{eq:WEcom2}
\end{align}
\end{enumerate}
\end{lemma}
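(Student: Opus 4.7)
Parts (i) and (ii) require no argument: each is cited from \cite[p.~307]{damiani} as already indicated in the statement. The substantive content is part (iii), and by the symmetry between the $\alpha_0$-side and $\alpha_1$-side of Damiani's construction (compare the two displays in each part of Lemma \ref{lem:com3}) it suffices to establish the first identity; the second follows by the parallel argument applied to the $\alpha_1$-version of Lemma \ref{lem:com3}, with the ``large-then-small'' normal order there accounting for the reversal of the indices in the sum $E_{(i-\ell)\delta+\alpha_1} E_{\ell\delta+\alpha_1}$.

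\medskip

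My plan for the first identity of (iii) is to clear denominators and then rewrite both sides in Damiani normal form using Lemma \ref{lem:com3}. After clearing denominators, the claim reads
\begin{equation*}
q W_0\, E_{i\delta+\alpha_0} \;-\; q^{-1} E_{i\delta+\alpha_0}\, W_0 \;=\; (q - q^{-1}) \sum_{\ell=0}^{i} E_{\ell\delta+\alpha_0}\, E_{(i-\ell)\delta+\alpha_0}.
\end{equation*}
On the left, Lemma \ref{lem:com3} with $j=0$ (part (i) when $i$ is odd, part (ii) when $i$ is even, which in the even case contributes a middle square term $E_{(i/2)\delta+\alpha_0}^2$) expresses $E_{i\delta+\alpha_0} W_0$ as $q^{-2} W_0 E_{i\delta+\alpha_0}$ plus a linear combination of normal-form monomials $E_{\ell\delta+\alpha_0} E_{(i-\ell)\delta+\alpha_0}$ with $1 \leq \ell \leq \lfloor i/2 \rfloor$. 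On the right, each term with $\ell > i/2$ is likewise rewritten into normal form by applying Lemma \ref{lem:com3} to the pair $(\ell, i-\ell)$. A routine coefficient match in each resulting basis monomial then reduces to elementary identities in powers of $q$; the small case $i=3$ already exhibits the cancellation pattern cleanly.

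\medskip

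The main obstacle is keeping the bookkeeping tractable across the parity split for $i$ and tracking the square middle term in the even case. For a more conceptual presentation, I would repackage the claim as the formal-series identity
\begin{equation*}
q W_0\, \mathcal{E}_0(z) \;-\; q^{-1} \mathcal{E}_0(z)\, W_0 \;=\; (q - q^{-1})\, \mathcal{E}_0(z)^2, \qquad \mathcal{E}_0(z) := \sum_{n \geq 0} E_{n\delta+\alpha_0}\, z^n,
\end{equation*}
and derive it by specializing at $w=0$ a two-variable generating function that encodes all the relations of Lemma \ref{lem:com3} simultaneously (expressing $\mathcal{E}_0(z) \mathcal{E}_0(w)$ in terms of $\mathcal{E}_0(w) \mathcal{E}_0(z)$ plus a corrective rational factor in $z,w$). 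In this formulation the parity split disappears, and the quadratic right-hand side of (iii) emerges directly from evaluating the bilinear generating identity.
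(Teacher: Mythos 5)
Your plan for part (iii) --- clear denominators, use Lemma \ref{lem:com3} with $j=0$ (parity split on $i$, with the middle square term in the even case) to put both sides into Damiani normal form, then match coefficients --- is exactly the paper's proof; the ``routine coefficient match'' you defer is precisely the explicit list of geometric-sum identities $\alpha_\ell=0$ that the paper writes out, and your treatment of (i), (ii) as citations and of the $\alpha_1$-identity as the mirror-image computation also matches the paper (which says ``similar, and omitted''). The generating-function repackaging you sketch at the end is a reasonable alternative but is not what the paper does and is not needed.
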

\begin{proof} (iii) To verify \eqref{eq:WEcom} and \eqref{eq:WEcom2}, use Lemma \ref{lem:com3} to write each term in the PBW basis for $U^+_q$ from Proposition \ref{prop:damiani}.
We give the details for \eqref{eq:WEcom}. Referring to \eqref{eq:WEcom}, let $\Delta$ denote the right-hand side minus the left-hand side. We show that $\Delta=0$.
This is quickly verified for $i=0$, so assume that $i\geq 1$. 
For $i$ even (resp. $i$ odd) write $i=2r$ (resp. $i=2r+1$).
Using Lemma \ref{lem:com3} we obtain $\Delta = \sum_{\ell =0}^r \alpha_\ell E_{\ell \delta+\alpha_0} E_{(i-\ell)\delta+\alpha_0}$, where
for $i$ even,
\begin{align*}
\alpha_0 & = 1+q^{-2} -\frac{q}{q-q^{-1}} + \frac{q^{-3}}{q-q^{-1}} ,
\\
\alpha_\ell &= 1+q^{-2} -(q^2-q^{-2})\sum_{k=1}^{\ell} q^{-2k} -(q+q^{-1})q^{-2\ell-1} \qquad (1 \leq \ell \leq r-1),
\\
\alpha_r &= 1-(q-q^{-1})\sum_{k=1}^r q^{1-2k} -q^{-i}
\end{align*}
and for $i$ odd,
\begin{align*}
\alpha_0 & = 1+q^{-2} -\frac{q}{q-q^{-1}} + \frac{q^{-3}}{q-q^{-1}} ,
\\
\alpha_\ell &= 1+q^{-2} -(q^2-q^{-2})\sum_{k=1}^{\ell} q^{-2k} -(q+q^{-1})q^{-2\ell-1} \qquad (1 \leq \ell \leq r).
\end{align*}
\noindent For either case $\alpha_\ell=0$ for $0 \leq \ell \leq r$, so $\Delta=0$. We have verified \eqref{eq:WEcom}. For  \eqref{eq:WEcom2} the details are
similar, and omitted.
\end{proof}

\section{An extension of $U^+_q$}

In this section we introduce the algebra $\mathcal U$. In Section 6 we will show that
$\mathcal U$ coincides with the alternating central extension $\mathcal U^+_q$  of $U^+_q$.
\newpage
\begin{definition}\rm
\label{def:CE}
Define the algebra $\mathcal U$ by generators $W_0$, $W_1$, $\lbrace \tilde G_{k+1} \rbrace_{k \in \mathbb N}$ and  relations
\begin{enumerate}
\item[\rm (i)] 
$ \lbrack W_0, \lbrack W_0, \lbrack W_0, W_1\rbrack_q \rbrack_{q^{-1}} \rbrack =0$,
\item[\rm (ii)] 
$ \lbrack W_1, \lbrack W_1, \lbrack W_1, W_0\rbrack_q \rbrack_{q^{-1}} \rbrack =0$,
\item[\rm (iii)] $ {\displaystyle {
       \lbrack \tilde G_1, W_1 \rbrack = q \frac{\lbrack \lbrack W_0, W_1\rbrack_q, W_1 \rbrack } { q^2-q^{-2}},
       }} $
       \item[\rm (iv)] $ {\displaystyle {
       \lbrack  W_0, \tilde G_1 \rbrack = q \frac{\lbrack W_0, \lbrack W_0, W_1\rbrack_q \rbrack } { q^2-q^{-2}},
       }} $
 \item[\rm (v)] for $k\geq 1$,
 \begin{align*}
 \lbrack \tilde G_{k+1}, W_1 \rbrack = \frac{ \lbrack \lbrack \lbrack \tilde G_k, W_0 \rbrack_q, W_1 \rbrack_q, W_1 \rbrack }{ (1-q^{-2})(q^2-q^{-2}) },
 \end{align*}
 \item[\rm (vi)] for $k\geq 1$,
 \begin{align*}
 \lbrack W_0, \tilde G_{k+1} \rbrack = \frac{ \lbrack W_0, \lbrack W_0, \lbrack W_1, \tilde G_k \rbrack_q \rbrack_q \rbrack}{(1-q^{-2})(q^2-q^{-2})},
 \end{align*}
 \item[\rm (vii)] for $k, \ell \in \mathbb N$,
 \begin{align*}
 \lbrack \tilde G_{k+1}, \tilde G_{\ell+1} \rbrack = 0.
 \end{align*}
       \end{enumerate}
       For notational convenience define $\tilde G_0=1$.
\end{definition}
\begin{note}\label{note:k}
 \rm Referring to Definition \ref{def:CE}, the relation (iii) (resp. (iv)) is obtained from (v) (resp. (vi)) by setting $k=0$.
\end{note}
\begin{lemma} \label{lem:flat}
There exists a unique algebra homomorphism $\flat: U^+_q \to \mathcal U$ that sends $W_0 \mapsto W_0$ and
$W_1 \mapsto W_1$. 
\end{lemma}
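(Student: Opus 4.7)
The plan is to invoke the universal property of $U^+_q$ directly. Recall from Definition \ref{def:U} that $U^+_q$ is presented by generators $W_0, W_1$ subject only to the two $q$-Serre relations. Hence, to produce an algebra homomorphism out of $U^+_q$, it suffices to exhibit, in the target algebra, two elements called $W_0$ and $W_1$ that satisfy the $q$-Serre relations; by the universal property, such a homomorphism exists and is uniquely determined by the images of the generators.

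In our situation the target algebra is $\mathcal U$, and Definition \ref{def:CE} explicitly lists as items (i) and (ii) that the generators $W_0, W_1$ of $\mathcal U$ satisfy
\[
\lbrack W_0, \lbrack W_0, \lbrack W_0, W_1\rbrack_q\rbrack_{q^{-1}}\rbrack = 0, \qquad \lbrack W_1, \lbrack W_1, \lbrack W_1, W_0\rbrack_q\rbrack_{q^{-1}}\rbrack = 0.
\]
Thus the two-element assignment $W_0 \mapsto W_0$, $W_1 \mapsto W_1$ respects the defining relations of $U^+_q$, and the universal property of the presentation in Definition \ref{def:U} produces a well-defined algebra homomorphism $\flat : U^+_q \to \mathcal U$. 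Uniqueness is automatic, since $W_0, W_1$ generate $U^+_q$ and so any two homomorphisms that agree on them must coincide.

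There is no real obstacle in the argument: the lemma is purely a bookkeeping statement that the defining relations of the domain are satisfied by the designated images in the codomain. The only thing worth flagging is notation: the same symbols $W_0, W_1$ are being used on both sides of $\flat$, so in the write-up I would emphasize explicitly that one set of symbols refers to the standard generators of $U^+_q$ (Definition \ref{def:U}) while the other refers to the correspondingly named generators of $\mathcal U$ (Definition \ref{def:CE}), and that it is precisely relations (i)--(ii) of Definition \ref{def:CE} that make the assignment well-defined. No interaction with the remaining generators $\{\tilde G_{k+1}\}_{k\in\mathbb N}$ or relations (iii)--(vii) is needed at this stage.
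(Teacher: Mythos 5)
Your proof is correct and is exactly the argument the paper intends: its one-line proof ``Compare Definitions \ref{def:U}, \ref{def:CE}'' is shorthand for precisely the universal-property observation you spell out, using relations (i)--(ii) of Definition \ref{def:CE}. Nothing further is needed.
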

\begin{proof} Compare 
Definitions \ref{def:U}, \ref{def:CE}.
\end{proof}
 
 \noindent  In Corollary \ref{cor:inj} we will show that $\flat $ is injective. Let  $\langle W_0, W_1 \rangle $ denote the subalgebra of $\mathcal U$ generated by $W_0, W_1$.
Of course $\langle W_0, W_1\rangle $ is the $\flat $-image of $U^+_q$. For the elements 
 (\ref{eq:Upbw})
of $U^+_q$, the same notation will be used for their $\flat $-images in $\langle W_0, W_1\rangle$.

%

\section{Augmenting the generating set for $\mathcal U$}

\noindent 
 Some of the relations in Definition \ref{def:CE} are nonlinear. Our next goal is to  linearize the  relations by adding more generators.
\begin{definition}
\label{def:wwg} 
\rm 
We define some elements in $\mathcal U$ as follows. For $k \in \mathbb N$,
\begin{align}
W_{-k} &= \frac{ \lbrack \tilde G_k, W_0 \rbrack_q}{ q-q^{-1}},
\label{eq:Wmk}
\\
W_{k+1} &= \frac{ \lbrack W_1, \tilde G_k \rbrack_q }{q-q^{-1}},
\label{eq:Wkp1}
\\
G_{k+1} &= \tilde G_{k+1} + \frac{\lbrack W_1, W_{-k} \rbrack}{1-q^{-2}}.
\label{eq:Gkp1}
\end{align}
For notational convenience define $G_0=1$.
\end{definition}
\begin{lemma} For $k \in \mathbb N$ the following hold in $\mathcal U$:
\begin{align*}
\tilde G_k W_0 &= q^{-2} W_0 \tilde G_k + (1-q^{-2}) W_{-k},
\\
\tilde G_k W_1 &= q^{2} W_1 \tilde G_k + (1-q^{2}) W_{k+1}.
\end{align*}
\end{lemma}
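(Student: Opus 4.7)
The plan is short: both identities are purely algebraic rearrangements of the defining equations \eqref{eq:Wmk} and \eqref{eq:Wkp1}, unpacking the $q$-commutator $[X,Y]_q = qXY - q^{-1}YX$ introduced in Section 2.

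For the first identity I would start from \eqref{eq:Wmk} in the form
\[
(q-q^{-1})\,W_{-k} \;=\; [\tilde G_k, W_0]_q \;=\; q\,\tilde G_k W_0 - q^{-1} W_0 \tilde G_k,
\]
and solve for $\tilde G_k W_0$ by multiplying through by $q^{-1}$, obtaining
\[
\tilde G_k W_0 \;=\; q^{-2} W_0 \tilde G_k + (1-q^{-2})\,W_{-k}.
\]
For the second identity I would start from \eqref{eq:Wkp1} in the form
\[
(q-q^{-1})\,W_{k+1} \;=\; [W_1, \tilde G_k]_q \;=\; q\,W_1 \tilde G_k - q^{-1}\,\tilde G_k W_1,
\]
and solve for $\tilde G_k W_1$ by multiplying through by $-q$, which gives
\[
\tilde G_k W_1 \;=\; q^{2} W_1 \tilde G_k + (1-q^{2})\,W_{k+1}.
\]

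The $k=0$ case deserves a sentence to confirm consistency with the convention $\tilde G_0 = 1$: one checks $[1, W_0]_q = (q-q^{-1})W_0$, so \eqref{eq:Wmk} yields $W_{-0} = W_0$, and the first identity degenerates to the tautology $W_0 = q^{-2} W_0 + (1-q^{-2}) W_0$; similarly for the second. There is no real obstacle here, since the lemma merely reformulates the definitions of $W_{-k}$ and $W_{k+1}$; no use of the more elaborate relations (iii)--(vii) of Definition \ref{def:CE} is required.
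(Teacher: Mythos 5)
Your proposal is correct and is exactly the paper's argument: the paper's proof simply states that the two identities are reformulations of \eqref{eq:Wmk} and \eqref{eq:Wkp1}, and your computation spells out precisely that rearrangement of the $q$-commutators. The remark on the $k=0$ case is a harmless extra consistency check and changes nothing.
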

\begin{proof} These are reformulations of (\ref{eq:Wmk}) and (\ref{eq:Wkp1}).
\end{proof}

\noindent  The following is a generating set for $\mathcal U$:
\begin{align}
\lbrace W_{-k} \rbrace_{k \in \mathbb N}, \qquad
\lbrace W_{k+1} \rbrace_{k \in \mathbb N}, \qquad
\lbrace G_{k+1} \rbrace_{k \in \mathbb N}, \qquad
\lbrace \tilde G_{k+1} \rbrace_{k \in \mathbb N}.
\label{eq:WWGG}
\end{align}
The elements of this set will be called {\it alternating}.
We seek a presentation of $\mathcal U$, that has the above generating set and all relations linear. We will
obtain this presentation in Theorem \ref{thm:Fin}.
\medskip

\noindent Next we obtain some formulas that will help us prove Theorem \ref{thm:Fin}.  We will show that for $n \in \mathbb N$,
\begin{align}
\label{eq:X}
W_{n+1} &= \sum_{k=0}^n \frac{ E_{k\delta + \alpha_1} \tilde G_{n-k} (-1)^k q^k }{ (q-q^{-1})^{2k}},
\\
\label{eq:Y}
W_{-n} &= \sum_{k=0}^n \frac{ E_{k\delta + \alpha_0} \tilde G_{n-k} (-1)^k q^{3k} }{ (q-q^{-1})^{2k}}.
\end{align}

\noindent We will prove (\ref{eq:X}), (\ref{eq:Y}) by induction on $n$. 
Note that (\ref{eq:X}), (\ref{eq:Y}) hold for $n=0$,
since $W_1=E_{\alpha_1}$ and $W_0= E_{\alpha_0}$.
 We will give the main induction argument after a few lemmas.
For the rest of this section $k$ and $\ell$ are understood to be in $\mathbb N$.

\begin{lemma} 
\label{lem:step1}
Pick $n \in \mathbb N$, and assume that {\rm (\ref{eq:X})}, {\rm (\ref{eq:Y})} hold for $n, n-1, \ldots, 1,0$. Then
\begin{align}
\lbrack W_0, W_{n+1} \rbrack = \lbrack W_{-n}, W_1 \rbrack.
\label{eq:step1}
\end{align}
\end{lemma}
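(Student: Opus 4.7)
The plan is to expand both sides of (\ref{eq:step1}) using the hypothesized formulas (\ref{eq:X}), (\ref{eq:Y}) and the Damiani commutation relation (\ref{eq:qcomE}), then reduce the identity to a scalar coefficient verification.

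First I would compute each summand $[W_0, E_{k\delta+\alpha_1}\tilde G_{n-k}]$ appearing in the expansion of $[W_0, W_{n+1}]$ via (\ref{eq:X}). Normal-ordering $W_0 E_{k\delta+\alpha_1} = q^{-2}E_{k\delta+\alpha_1}W_0 - E_{(k+1)\delta}$ via (\ref{eq:qcomE}), and rewriting $W_0 \tilde G_{n-k} = q^2 \tilde G_{n-k}W_0 - q(q-q^{-1})W_{-(n-k)}$ as a reformulation of (\ref{eq:Wmk}), each summand collapses to
\begin{align*}
[W_0, E_{k\delta+\alpha_1}\tilde G_{n-k}] = -q^{-1}(q-q^{-1})E_{k\delta+\alpha_1}W_{-(n-k)} - E_{(k+1)\delta}\tilde G_{n-k}.
\end{align*}
A parallel calculation, invoking (\ref{eq:Wkp1}) in place of (\ref{eq:Wmk}), produces the analogous formula for each summand $[E_{k\delta+\alpha_0}\tilde G_{n-k}, W_1]$ of $[W_{-n}, W_1]$, with $W_{(n-k)+1}$ in place of $W_{-(n-k)}$.

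Next I would substitute the hypothesized expressions (\ref{eq:X}), (\ref{eq:Y}) into the internal $W_{-(n-k)}$ and $W_{(n-k)+1}$ factors. This turns both sides into double sums over $E_{k\delta+\alpha_1}E_{j\delta+\alpha_0}\tilde G_{n-k-j}$ on the left and $E_{k\delta+\alpha_0}E_{j\delta+\alpha_1}\tilde G_{n-k-j}$ on the right, plus residual terms of the form $E_{(m+1)\delta}\tilde G_{n-m}$. Using (\ref{eq:qcomE}) once more in the rearranged form $E_{k\delta+\alpha_1}E_{j\delta+\alpha_0} = q^2 E_{j\delta+\alpha_0}E_{k\delta+\alpha_1} + q^2 E_{(k+j+1)\delta}$ and then swapping dummy indices $k \leftrightarrow j$ matches the two double sums exactly.

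What remains is the vanishing of the residual $E_{(m+1)\delta}\tilde G_{n-m}$ coefficients, which reduces to the scalar identity
\begin{align*}
c^{(1)}_m + q(q-q^{-1})\sum_{k=0}^{m} c^{(1)}_k c^{(0)}_{m-k} = q^2 c^{(0)}_m \qquad (0 \leq m \leq n),
\end{align*}
where $c^{(1)}_m = (-1)^m q^m/(q-q^{-1})^{2m}$ and $c^{(0)}_m = (-1)^m q^{3m}/(q-q^{-1})^{2m}$ are the coefficients in (\ref{eq:X}), (\ref{eq:Y}). Since both sequences are geometric, their generating functions $A(z), B(z)$ are simple rational functions, and a one-line manipulation shows $A(z)\bigl(1 + q(q-q^{-1})B(z)\bigr) = q^2 B(z)$, which confirms the identity. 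The main obstacle is the careful accounting of $q$-powers during the double-sum manipulation; the terminal scalar identity itself collapses via this short generating-function argument.
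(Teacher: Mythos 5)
Your proposal is correct and follows essentially the same route as the paper: expand both commutators via (\ref{eq:X}), (\ref{eq:Y}), normal-order using (\ref{eq:qcomE}) together with the reformulations of (\ref{eq:Wmk}), (\ref{eq:Wkp1}), substitute the induction hypothesis into the inner $W$-factors, and reduce to a geometric-sum coefficient identity. The only difference is organizational — the paper folds everything into a single coefficient $C_p$ of $E_{(p+1)\delta}\tilde G_{n-p}$ and checks $C_p=0$ directly, while you match the double sums first and dispatch the residual scalar identity by a generating-function computation; both verifications amount to the same telescoping geometric sum.
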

\begin{proof}  The commutator $\lbrack W_0, W_{n+1} \rbrack$ is equal to
\begin{align*}
& W_0 W_{n+1} - W_{n+1} W_0
\\
&= \sum_{k=0}^n \frac{W_0 E_{k\delta + \alpha_1} \tilde G_{n-k} (-1)^k q^k }{ (q-q^{-1})^{2k}}
- \sum_{k=0}^n \frac{ E_{k\delta + \alpha_1} \tilde G_{n-k} W_0 (-1)^k q^k}{(q-q^{-1})^{2k}}
\\
&= \sum_{k=0}^n \frac{W_0 E_{k\delta + \alpha_1} \tilde G_{n-k} (-1)^k q^k }{ (q-q^{-1})^{2k}}
- \sum_{k=0}^n \frac{ E_{k\delta + \alpha_1} \bigl(q^{-2} W_0 \tilde G_{n-k} + (1-q^{-2})W_{k-n} \bigr)(-1)^k q^k}{(q-q^{-1})^{2k}}
\\
&= \sum_{k=0}^n \frac{\bigl( W_0 E_{k\delta + \alpha_1}-q^{-2} E_{k\delta+\alpha_1} W_0\bigr) \tilde G_{n-k} (-1)^k q^k }{ (q-q^{-1})^{2k}}
- \sum_{k=0}^n \frac{ E_{k\delta + \alpha_1} W_{k-n} (-1)^k q^{k-1}}{(q-q^{-1})^{2k-1}}
\\
&= -\sum_{k=0}^n \frac{E_{(k+1)\delta} \tilde G_{n-k} (-1)^k q^k }{ (q-q^{-1})^{2k}}
- \sum_{k=0}^n \frac{ E_{k\delta + \alpha_1} W_{k-n} (-1)^k q^{k-1}}{(q-q^{-1})^{2k-1}}
\\
&= -\sum_{k=0}^n \frac{E_{(k+1)\delta} \tilde G_{n-k} (-1)^k q^k }{ (q-q^{-1})^{2k}}
- \sum_{k=0}^n \frac{ E_{k\delta + \alpha_1} (-1)^k q^{k-1}}{(q-q^{-1})^{2k-1}}\sum_{\ell=0}^{n-k} \frac{ E_{\ell \delta + \alpha_0} \tilde G_{n-k-\ell} (-1)^\ell q^{3\ell}}{(q-q^{-1})^{2\ell}}
\\
&= -\sum_{p=0}^n \frac{E_{(p+1)\delta} \tilde G_{n-p} (-1)^p q^p }{ (q-q^{-1})^{2p}}
- \sum_{p=0}^n \Biggl( \sum_{k+\ell=p}
 q^{2\ell}  E_{k\delta + \alpha_1} E_{\ell \delta + \alpha_0} \Biggr) \frac{ \tilde G_{n-p} (-1)^p q^{p-1} }{(q-q^{-1})^{2p-1}}.
\end{align*}
\noindent The commutator $\lbrack W_{-n}, W_1 \rbrack$ is equal to
\begin{align*}
& W_{-n} W_1 - W_1 W_{-n}
\\
&= \sum_{k=0}^n \frac{ E_{k\delta + \alpha_0} \tilde G_{n-k} W_1 (-1)^k q^{3k} }{ (q-q^{-1})^{2k}}
- \sum_{k=0}^n \frac{ W_1 E_{k\delta + \alpha_0} \tilde G_{n-k} (-1)^k q^{3k} }{ (q-q^{-1})^{2k}}
\\
&= \sum_{k=0}^n \frac{ E_{k\delta + \alpha_0} \bigl( q^2 W_1 \tilde G_{n-k} + (1-q^2) W_{n-k+1} \bigr) (-1)^k q^{3k} }{ (q-q^{-1})^{2k}}
- \sum_{k=0}^n \frac{ W_1 E_{k\delta + \alpha_0} \tilde G_{n-k} (-1)^k q^{3k} }{ (q-q^{-1})^{2k}}
\\
&= \sum_{k=0}^n \frac{ \bigl( q^2 E_{k\delta + \alpha_0} W_1 -W_1 E_{k\delta + \alpha_0}  \bigr) \tilde G_{n-k} (-1)^k q^{3k} }{ (q-q^{-1})^{2k}}
- \sum_{k=0}^n \frac{ E_{k\delta + \alpha_0} W_{n-k+1} (-1)^k q^{3k+1} }{ (q-q^{-1})^{2k-1}}
\\
&= -\sum_{k=0}^n \frac{ E_{(k+1)\delta} \tilde G_{n-k} (-1)^k q^{3k+2} }{ (q-q^{-1})^{2k}}
- \sum_{k=0}^n \frac{ E_{k\delta + \alpha_0} W_{n-k+1} (-1)^k q^{3k+1} }{ (q-q^{-1})^{2k-1}}
\\
&= -\sum_{k=0}^n \frac{ E_{(k+1)\delta} \tilde G_{n-k} (-1)^k q^{3k+2}}{ (q-q^{-1})^{2k}}
-
\sum_{k=0}^n \frac{ E_{k\delta + \alpha_0} (-1)^k q^{3k+1}}{(q-q^{-1})^{2k-1}} \sum_{\ell=0}^{n-k} \frac{ E_{\ell \delta + \alpha_1} \tilde G_{n-k-\ell} (-1)^\ell q^\ell }{ (q-q^{-1})^{2\ell}}
\\
&
= -\sum_{p=0}^n \frac{ E_{(p+1)\delta} \tilde G_{n-p} (-1)^p q^{3p+2}}{ (q-q^{-1})^{2p}}
- \sum_{p=0}^n \Biggl( \sum_{k+\ell=p} 
 q^{2k}  E_{k\delta + \alpha_0} E_{\ell \delta + \alpha_1} \Biggr) \frac{ \tilde G_{n-p} (-1)^p q^{p+1} }{(q-q^{-1})^{2p-1}}.
\end{align*}
\noindent By these comments
\begin{align*} 
 \lbrack W_{-n}, W_1 \rbrack -
\lbrack W_0, W_{n+1} \rbrack
 = \sum_{p=0}^n \frac{C_p \tilde G_{n-p} (-1)^p q^p}{(q-q^{-1})^{2p}},
\end{align*}
where for $0 \leq p \leq n$,
\begin{align*}
C_p &=  E_{(p+1)\delta} +
 q^{-1}(q-q^{-1}) \sum_{k+\ell=p} q^{2\ell}  E_{k \delta+ \alpha_1} E_{\ell \delta + \alpha_0}\\
 &\qquad \qquad \qquad  - q^{2p+2} E_{(p+1)\delta}
-q(q-q^{-1}) \sum_{k+\ell=p} q^{2k} E_{k \delta + \alpha_0} E_{\ell \delta + \alpha_1}
\\
&=  (1-q^{2p+2}) E_{(p+1)\delta} - (1-q^2)\sum_{k+\ell=p} q^{2\ell} \bigl(q^{-2} E_{k \delta+ \alpha_1} E_{\ell \delta + \alpha_0}-
E_{\ell \delta + \alpha_0} E_{k \delta + \alpha_1} \bigr)
\\
 &=  (1-q^{2p+2})E_{(p+1)\delta} - (1-q^2)\sum_{k+\ell=p} q^{2\ell} E_{(p+1)\delta}
 \\
 &=  \biggl(1-q^{2p+2} - (1-q^2)\sum_{\ell=0}^p q^{2\ell} \biggr) E_{(p+1) \delta}
 \\
 &= 0.
 \end{align*}
 The result follows.
\end{proof} 

\begin{lemma} \label{lem:step2}
Pick $n \in \mathbb N$, and assume that {\rm (\ref{eq:X})}, {\rm (\ref{eq:Y})} hold for $n, n-1, \ldots, 1,0$. Then
\begin{align}
\lbrack \tilde G_n, E_\delta \rbrack = 0.
\label{eq:step2}
\end{align}
\end{lemma}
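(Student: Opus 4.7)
The plan is to push $\tilde G_n$ past $E_\delta = q^{-2}W_1 W_0 - W_0 W_1$ using the two commutation rules stated just after Definition \ref{def:wwg}, namely
\[
\tilde G_n W_0 = q^{-2} W_0 \tilde G_n + (1-q^{-2}) W_{-n}, \qquad \tilde G_n W_1 = q^2 W_1 \tilde G_n + (1-q^2) W_{n+1},
\]
and then to identify the resulting error term with the quantity that Lemma \ref{lem:step1} forces to vanish. No use of the explicit expansions (\ref{eq:X}), (\ref{eq:Y}) is needed beyond what is already encoded in Lemma \ref{lem:step1}.

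First I would compute $\tilde G_n W_1 W_0$ by applying the second rule and then the first, obtaining
\[
\tilde G_n W_1 W_0 = W_1 W_0 \tilde G_n + (q^2-1)\bigl(W_1 W_{-n} - W_{n+1} W_0\bigr),
\]
and dually $\tilde G_n W_0 W_1 = W_0 W_1 \tilde G_n + (1-q^{-2})\bigl(W_{-n} W_1 - W_0 W_{n+1}\bigr)$. Multiplying the first line by $q^{-2}$ and subtracting the second line, the $\tilde G_n$-free coefficients collapse via the identity $q^{-2}(q^2-1) = 1-q^{-2}$, giving
\[
\tilde G_n E_\delta = E_\delta \tilde G_n + (1-q^{-2})\bigl([W_0, W_{n+1}] - [W_{-n}, W_1]\bigr).
\]

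At this point Lemma \ref{lem:step1} applies and the bracketed expression is zero, so $[\tilde G_n, E_\delta] = 0$, as required. The $n=0$ case is automatic since $\tilde G_0 = 1$ (and, incidentally, $W_{-0} = W_0$, $W_1 = W_{0+1}$, so Lemma \ref{lem:step1} is trivial there as well).

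The only obstacle is purely bookkeeping: keeping the signs and powers of $q$ straight when moving $\tilde G_n$ through the two-letter products $W_1 W_0$ and $W_0 W_1$, and noticing the cancellation $q^{-2}(q^2-1) = 1-q^{-2}$ that produces the clean commutator combination appearing in Lemma \ref{lem:step1}. Once that identity is spotted, the conclusion is immediate.
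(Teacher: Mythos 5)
Your proof is correct and follows essentially the same route as the paper: both arguments use only Lemma \ref{lem:step1} together with the relations $\lbrack \tilde G_n, W_0\rbrack_q = (q-q^{-1})W_{-n}$ and $\lbrack W_1, \tilde G_n\rbrack_q = (q-q^{-1})W_{n+1}$, reducing $\lbrack \tilde G_n, E_\delta\rbrack$ to the difference $\lbrack W_0, W_{n+1}\rbrack - \lbrack W_{-n}, W_1\rbrack$. The paper packages the computation as the identity $\lbrack\lbrack \tilde G_n, W_0\rbrack_q, W_1\rbrack - \lbrack W_0, \lbrack W_1, \tilde G_n\rbrack_q\rbrack = \lbrack \tilde G_n, \lbrack W_0, W_1\rbrack_q\rbrack$, while you carry out the equivalent expansion by moving $\tilde G_n$ through $W_1W_0$ and $W_0W_1$ term by term; the content is the same.
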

\begin{proof} Using Lemma \ref{lem:step1},
\begin{align*}
0 &=
(q-q^{-1}) \bigl( \lbrack W_{-n}, W_1 \rbrack - \lbrack W_0, W_{n+1} \rbrack \bigr)
\\
&= \lbrack \lbrack \tilde G_n, W_0 \rbrack_q, W_1 \rbrack - \lbrack W_0, \lbrack W_1, \tilde G_n \rbrack_q \rbrack
\\
&= \lbrack \tilde G_n, \lbrack W_0, W_1 \rbrack_q \rbrack
\\
&= -q \lbrack \tilde G_n, E_\delta \rbrack.
\end{align*}
\end{proof}

\begin{lemma} \label{lem:step3}
Pick $n \in \mathbb N$, and assume that {\rm (\ref{eq:X})}, {\rm (\ref{eq:Y})} hold for $n, n-1, \ldots, 1,0$. Then
\begin{align}
\lbrack W_{-n}, W_0 \rbrack = 0.
\label{eq:step3}
\end{align}
\end{lemma}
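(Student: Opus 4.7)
The plan is to compute $\lbrack W_{-n}, W_0\rbrack$ directly by expanding $W_{-n}$ via the inductive hypothesis (\ref{eq:Y}), and to show that after pushing $W_0$ past $\tilde G_{n-k}$ and past $E_{k\delta+\alpha_0}$ the resulting two double sums coincide term-by-term. This mirrors the bookkeeping used in Lemma \ref{lem:step1}, so I will follow the same style.

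First I would write
\begin{align*}
W_{-n} W_0
=
\sum_{k=0}^n \frac{(-1)^k q^{3k}}{(q-q^{-1})^{2k}}\, E_{k\delta+\alpha_0}\tilde G_{n-k} W_0
\end{align*}
and use the reformulation $\tilde G_{n-k} W_0 = q^{-2} W_0\tilde G_{n-k}+(1-q^{-2})W_{k-n}$ (the lemma following Definition \ref{def:wwg}) to trade $\tilde G_{n-k}W_0$ for $q^{-2}W_0\tilde G_{n-k}+(1-q^{-2})W_{k-n}$. Subtracting $W_0 W_{-n}$ produces the combination
\begin{align*}
q^{-2}E_{k\delta+\alpha_0}W_0 - W_0 E_{k\delta+\alpha_0},
\end{align*}
which equals $-q^{-1}\lbrack W_0,E_{k\delta+\alpha_0}\rbrack_q$; by (\ref{eq:WEcom}) this is precisely $-(1-q^{-2})\sum_{\ell=0}^{k} E_{\ell\delta+\alpha_0}E_{(k-\ell)\delta+\alpha_0}$. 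So the commutator factors as $(1-q^{-2})$ times a single expression involving the $E_{\cdot\delta+\alpha_0}$'s, the $\tilde G_{\cdot}$'s, and the $W_{k-n}$'s.

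Next I would eliminate the remaining $W_{k-n}$ by applying (\ref{eq:Y}) again, valid by hypothesis. This turns $E_{k\delta+\alpha_0}W_{k-n}$ into a sum over an index $j$ of terms $E_{k\delta+\alpha_0}E_{j\delta+\alpha_0}\tilde G_{n-k-j}$ weighted by $(-1)^j q^{3j}/(q-q^{-1})^{2j}$. After this substitution, $\lbrack W_{-n}, W_0\rbrack/(1-q^{-2})$ becomes the difference of two double sums, each of which I would reindex by the total index $p=k+j$ (respectively $p=k$, with inner index $\ell$) so that both are displayed as
\begin{align*}
\sum_{p=0}^n \frac{(-1)^p q^{3p}}{(q-q^{-1})^{2p}}\sum_{k=0}^p E_{k\delta+\alpha_0}E_{(p-k)\delta+\alpha_0}\,\tilde G_{n-p}.
\end{align*}
The two sums are identical, so their difference vanishes and the proof is complete.

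The only delicate point is the cancellation at the end, which hinges on the precise weight $q^{3k}/(q-q^{-1})^{2k}$ in (\ref{eq:Y}) matching the factor $-q^{-1}(q-q^{-1})$ produced by (\ref{eq:WEcom}); this is exactly what forces the two reindexed sums to agree. Note that unlike Lemma \ref{lem:step1}, no contribution from $E_{\cdot\delta}$ appears, because $W_0$ commutes with itself and the relevant Damiani relation is the one in (\ref{eq:WEcom}) rather than (\ref{eq:qcomE}); this is what allows the cancellation to be exact rather than requiring the kind of telescoping identity $1-q^{2p+2}-(1-q^2)\sum_{\ell=0}^{p}q^{2\ell}=0$ used in Lemma \ref{lem:step1}.
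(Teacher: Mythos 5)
Your proposal is correct and follows essentially the same route as the paper: expand $W_{-n}$ via (\ref{eq:Y}), use $\tilde G_{n-k}W_0 = q^{-2}W_0\tilde G_{n-k}+(1-q^{-2})W_{k-n}$, substitute (\ref{eq:Y}) again for $W_{k-n}$, reindex by $p$, and invoke (\ref{eq:WEcom}). The only cosmetic difference is that you apply (\ref{eq:WEcom}) early to rewrite $-q^{-1}\lbrack W_0,E_{k\delta+\alpha_0}\rbrack_q$ before combining, whereas the paper keeps that $q$-commutator symbolic and applies (\ref{eq:WEcom}) at the very end to show each coefficient $S_p$ vanishes.
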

\begin{proof} The commutator $\lbrack W_{-n}, W_0\rbrack$ is equal to
\begin{align*}
&W_{-n} W_0 - W_0 W_{-n}
\\
&= \sum_{k=0}^n \frac{ E_{k\delta + \alpha_0} \tilde G_{n-k} W_0 (-1)^k q^{3k}}{(q-q^{-1})^{2k}}
-\sum_{k=0}^n \frac{ W_0 E_{k \delta + \alpha_0} \tilde G_{n-k} (-1)^k q^{3k}}{(q-q^{-1})^{2k}}
\\
&= \sum_{k=0}^n \frac{ E_{k\delta + \alpha_0} \bigl( q^{-2} W_0 \tilde G_{n-k} + (1-q^{-2}) W_{k-n}\bigr)  (-1)^k q^{3k}}{(q-q^{-1})^{2k}}
-\sum_{k=0}^n \frac{ W_0 E_{k \delta + \alpha_0} \tilde G_{n-k} (-1)^k q^{3k}}{(q-q^{-1})^{2k}}
\\
&=\sum_{k=0}^n \frac{ \lbrack W_0, E_{k \delta + \alpha_0} \rbrack_q  \tilde G_{n-k} (-1)^{k-1} q^{3k-1} }{ (q-q^{-1})^{2k}}
+
\sum_{k=0}^n \frac{ E_{k\delta + \alpha_0} W_{k-n} (-1)^k q^{3k-1}}{(q-q^{-1})^{2k-1}}
\\
&=\sum_{k=0}^n \frac{ \lbrack W_0, E_{k \delta + \alpha_0} \rbrack_q \tilde G_{n-k} (-1)^{k-1} q^{3k-1} }{ (q-q^{-1})^{2k}}
+
\sum_{k=0}^n \frac{ E_{k\delta + \alpha_0} (-1)^k q^{3k-1}}{(q-q^{-1})^{2k-1} } \sum_{\ell=0}^{n-k}  \frac{ E_{\ell \delta + \alpha_0} \tilde G_{n-k-\ell}  (-1)^\ell q^{3\ell }}{(q-q^{-1})^{2\ell}}
\\
&=\sum_{p=0}^n \frac{ \lbrack W_0, E_{p\delta + \alpha_0} \rbrack_q  \tilde G_{n-p} (-1)^{p-1} q^{3p-1} }{ (q-q^{-1})^{2p}}
+
\sum_{p=0}^n \Biggl( \sum_{k+\ell=p}   E_{k\delta + \alpha_0} E_{\ell \delta + \alpha_0} \Biggr)  \frac{ \tilde G_{n-p}   (-1)^p q^{3p-1 }}{(q-q^{-1})^{2p-1}}.
\end{align*}
\noindent By these comments
\begin{align*}
\lbrack W_{-n}, W_0\rbrack  = \sum_{p=0}^n \frac{ S_p \tilde G_{n-p} (-1)^{p-1}q^{3p-1}}{(q-q^{-1})^{2p-1}}
\end{align*}
\noindent where 
\begin{align*}
S_p & = 
 \frac{ \lbrack W_0, E_{p\delta + \alpha_0}\rbrack_q}{q-q^{-1}} 
-\sum_{k+\ell=p} E_{k\delta + \alpha_0} E_{\ell \delta + \alpha_0}  \qquad \qquad (0 \leq p \leq n).
\end{align*}  By  (\ref{eq:WEcom}) we have $S_p=0$ for $0 \leq p \leq n$. The result follows.
\end{proof}

\begin{lemma} \label{lem:step4}
Pick $n \in \mathbb N$, and assume that {\rm (\ref{eq:X})}, {\rm (\ref{eq:Y})} hold for $n, n-1, \ldots, 1,0$. Then
\begin{align}
\lbrack W_{n+1}, W_1 \rbrack = 0.
\label{eq:step4}
\end{align}
\end{lemma}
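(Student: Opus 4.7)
The plan is to mimic the computation carried out in the proof of Lemma \ref{lem:step3}, exchanging throughout the roles of $(\alpha_0, W_0)$ and $(\alpha_1, W_1)$. First I would substitute the formula \eqref{eq:X} for $W_{n+1}$ into the commutator $[W_{n+1}, W_1] = W_{n+1}W_1 - W_1 W_{n+1}$, then push $W_1$ across each $\tilde G_{n-k}$ using the identity $\tilde G_{n-k} W_1 = q^{2} W_1 \tilde G_{n-k} + (1-q^{2}) W_{n-k+1}$ recorded just before \eqref{eq:WWGG}. This produces, for each $k$, a commutator-like piece proportional to $(q^{2} E_{k\delta+\alpha_1} W_1 - W_1 E_{k\delta+\alpha_1})\tilde G_{n-k}= q\,[E_{k\delta+\alpha_1}, W_1]_q\,\tilde G_{n-k}$, together with an inductive piece proportional to $E_{k\delta+\alpha_1} W_{n-k+1}$.

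Next I would expand each $W_{n-k+1}$ in the inductive pieces by a second application of \eqref{eq:X} (permissible since $n-k+1\le n$ by hypothesis), and simultaneously rewrite each $q$-commutator $[E_{k\delta+\alpha_1}, W_1]_q$ in the commutator-like pieces by means of \eqref{eq:WEcom2}, so that it becomes $(q-q^{-1})\sum_{\ell=0}^{k} E_{(k-\ell)\delta+\alpha_1} E_{\ell\delta+\alpha_1}$. After these substitutions, every surviving summand has the form (a product of two $E_{\bullet\delta+\alpha_1}$'s)$\,\cdot\,\tilde G_{n-p}$ with $p=k+\ell$.

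I would then reindex both double sums by $p$, pulling out a common factor of the shape $(-1)^{p}q^{p+1}(q-q^{-1})^{1-2p}\,\tilde G_{n-p}$. The bracketed coefficient collapses, exactly as in Lemma \ref{lem:step3}, to the quantity
\[
T_p \;=\; \frac{[E_{p\delta+\alpha_1},W_1]_q}{q-q^{-1}} \;-\; \sum_{k+\ell=p}E_{k\delta+\alpha_1}E_{\ell\delta+\alpha_1},
\]
and \eqref{eq:WEcom2} gives $T_p=0$ for all $0\le p\le n$. Therefore $[W_{n+1},W_1]=0$.

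The proof requires no new ingredient beyond \eqref{eq:X} and \eqref{eq:WEcom2}; it is the mirror image of Lemma \ref{lem:step3}, where \eqref{eq:Y} and \eqref{eq:WEcom} played the analogous roles. The only obstacle I anticipate is pure bookkeeping: the sign and $q$-power conventions in \eqref{eq:X} (namely $(-1)^{k}q^{k}$) differ from those in \eqref{eq:Y} (namely $(-1)^{k}q^{3k}$), and the identity used to move $\tilde G$ past $W_1$ introduces a factor $q^{2}$ rather than $q^{-2}$; keeping these aligned so that the two double sums match term for term is the delicate part. Once the reindexing is correct, cancellation is immediate.
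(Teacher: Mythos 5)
Your proposal is correct and follows essentially the same route as the paper: substitute \eqref{eq:X} for $W_{n+1}$, move $W_1$ past $\tilde G_{n-k}$ via $\tilde G_{n-k}W_1 = q^2 W_1\tilde G_{n-k}+(1-q^2)W_{n-k+1}$, expand the resulting $W_{n-k+1}$ by a second use of \eqref{eq:X}, reindex by $p=k+\ell$, and observe that the coefficient of $\tilde G_{n-p}$ is a multiple of $T_p$, which vanishes by \eqref{eq:WEcom2}. The only cosmetic difference is that you apply \eqref{eq:WEcom2} to the $q$-commutators before collecting terms, whereas the paper collects first and invokes \eqref{eq:WEcom2} at the very end; the bookkeeping you flag works out exactly as you describe.
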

\begin{proof} The commutator $\lbrack W_{n+1}, W_1\rbrack$ is equal to
\begin{align*}
&W_{n+1} W_1 - W_1 W_{n+1}
\\&= \sum_{k=0}^n \frac{ E_{k\delta + \alpha_1} \tilde G_{n-k} W_1 (-1)^k q^{k}}{(q-q^{-1})^{2k}}
-\sum_{k=0}^n \frac{ W_1 E_{k \delta + \alpha_1} \tilde G_{n-k} (-1)^k q^{k}}{(q-q^{-1})^{2k}}
\\
&= \sum_{k=0}^n \frac{ E_{k\delta + \alpha_1} \bigl( q^{2} W_1 \tilde G_{n-k} + (1-q^{2}) W_{n-k+1}\bigr)  (-1)^k q^{k}}{(q-q^{-1})^{2k}}
-\sum_{k=0}^n \frac{ W_1 E_{k \delta + \alpha_1} \tilde G_{n-k} (-1)^k q^{k}}{(q-q^{-1})^{2k}}
\\
&=\sum_{k=0}^n \frac{ \lbrack  E_{k \delta + \alpha_1}, W_1 \rbrack_q  \tilde G_{n-k} (-1)^{k} q^{k+1} }{ (q-q^{-1})^{2k}}
-
\sum_{k=0}^n \frac{ E_{k\delta + \alpha_1} W_{n-k+1} (-1)^k q^{k+1}}{(q-q^{-1})^{2k-1}}
\\
&=\sum_{k=0}^n \frac{ \lbrack E_{k \delta + \alpha_1}, W_1 \rbrack_q \tilde G_{n-k} (-1)^{k} q^{k+1} }{ (q-q^{-1})^{2k}}
-
\sum_{k=0}^n \frac{ E_{k\delta + \alpha_1} (-1)^k q^{k+1}}{(q-q^{-1})^{2k-1} } \sum_{\ell=0}^{n-k}  \frac{ E_{\ell \delta + \alpha_1} \tilde G_{n-k-\ell}  (-1)^\ell q^{\ell }}{(q-q^{-1})^{2\ell}}
\\
&=\sum_{p=0}^n \frac{ \lbrack  E_{p\delta + \alpha_1}, W_1 \rbrack_q  \tilde G_{n-p} (-1)^{p} q^{p+1} }{ (q-q^{-1})^{2p}}
-
\sum_{p=0}^n \Biggl( \sum_{k+\ell=p}   E_{k\delta + \alpha_1} E_{\ell \delta + \alpha_1} \Biggr)  \frac{ \tilde G_{n-p}   (-1)^p q^{p+1 }}{(q-q^{-1})^{2p-1}}.
\end{align*}
\noindent By these comments
\begin{align*}
\lbrack W_{n+1}, W_1\rbrack  = \sum_{p=0}^n \frac{ T_p \tilde G_{n-p} (-1)^{p}q^{p+1}}{(q-q^{-1})^{2p-1}}
\end{align*}
\noindent where 
\begin{align*}
T_p & = 
 \frac{ \lbrack E_{p\delta + \alpha_1}, W_1 \rbrack_q}{q-q^{-1}} 
-\sum_{k+\ell=p} E_{k\delta + \alpha_1} E_{\ell \delta + \alpha_1}  \qquad \qquad (0 \leq p \leq n).
\end{align*}  By  (\ref{eq:WEcom2}) we have $T_p=0$ for $0 \leq p \leq n$. The result follows.
\end{proof}

\begin{proposition} 
\label{prop:alln}
The equations  {\rm (\ref{eq:X})}, {\rm (\ref{eq:Y})} hold in $\mathcal U$ for $n\in \mathbb N$.
\end{proposition}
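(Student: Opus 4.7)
My plan is to prove Proposition \ref{prop:alln} by strong induction on $n$. The base case $n=0$ is immediate: equation (\ref{eq:X}) reduces to $W_1 = E_{\alpha_1}$ and (\ref{eq:Y}) reduces to $W_0 = E_{\alpha_0}$, both of which hold by (\ref{eq:dam1}) under the convention $\tilde G_0 = 1$.

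For the inductive step I will assume (\ref{eq:X}) and (\ref{eq:Y}) hold for every index in $\{0,1,\ldots,n\}$ and derive them at $n+1$. I focus on (\ref{eq:X}); the argument for (\ref{eq:Y}) is entirely parallel, with the roles of $W_0/W_1$, (\ref{eq:Wmk})/(\ref{eq:Wkp1}), (\ref{eq:WEcom})/(\ref{eq:WEcom2}), and relations (v)/(vi) of Definition \ref{def:CE} interchanged. Starting from (\ref{eq:Wkp1}) I split the $q$-commutator as $[W_1,\tilde G_{n+1}]_q = q^{-1}[W_1,\tilde G_{n+1}] + (q-q^{-1})W_1\tilde G_{n+1}$, giving
\[
W_{n+2} \;=\; W_1\tilde G_{n+1} \;-\; \frac{q^{-1}[\tilde G_{n+1},W_1]}{q-q^{-1}}.
\]
I then invoke relation (v) of Definition \ref{def:CE} with $k=n$ and use $[\tilde G_n,W_0]_q = (q-q^{-1})W_{-n}$ from (\ref{eq:Wmk}) to rewrite
\[
[\tilde G_{n+1},W_1] \;=\; \frac{(q-q^{-1})\,[[W_{-n},W_1]_q, W_1]}{(1-q^{-2})(q^2-q^{-2})},
\]
at which point the induction hypothesis (\ref{eq:Y}) supplies an explicit formula for $W_{-n}$ inside the double bracket.

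Expanding the double bracket relies on three simple moves. First, the $q$-commutation $[E_{j\delta+\alpha_0},W_1]_q = -qE_{(j+1)\delta}$ from (\ref{eq:qcomE}) produces the Damiani imaginary-root vectors that will recombine with $\tilde G$'s. Second, the identity $\tilde G_m W_1 = q^2 W_1\tilde G_m + (1-q^2)W_{m+1}$ (a restatement of (\ref{eq:Wkp1})) lets me slide $\tilde G_m$ past $W_1$ at the price of a correction term that is itself of Damiani shape, by the induction hypothesis (\ref{eq:X}) at index $\leq n$. Third, the commutativity $[\tilde G_m, E_\delta] = 0$ for $m\leq n$ from Lemma \ref{lem:step2}, together with (\ref{eq: mutCom}), allows free reordering of $E_\delta$ past $\tilde G$'s inside the sum. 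Reindexing the double sum and combining the result with the leading $W_1\tilde G_{n+1}$ (which is the $k=0$ summand of the target on the right side of (\ref{eq:X}) at $n+1$) should then produce exactly the claimed formula.

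The main obstacle I expect is coefficient bookkeeping: the double expansion generates a lot of terms whose scalars must telescope to match the compact target. I anticipate that the cancellation will mirror the identity $1-q^{2p+2}-(1-q^2)\sum_{\ell=0}^p q^{2\ell}=0$ used at the close of Lemma \ref{lem:step1} (and its counterparts in Lemmas \ref{lem:step3} and \ref{lem:step4}); those four lemma proofs thus serve as templates for how the final sum collapses. Beyond this bookkeeping, everything reduces to bilinearity of the $q$-commutator and reindexing.
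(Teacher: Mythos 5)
Your setup is right and matches the paper's through the first two displays: induction on $n$, the base case, and the reduction
\[
W_{n+2}=W_1\tilde G_{n+1}-\frac{\bigl[\lbrack W_{-n},W_1\rbrack_q,\,W_1\bigr]}{(q-q^{-1})(q^2-q^{-2})}
\]
via \eqref{eq:Wkp1}, Definition \ref{def:CE}(v) and \eqref{eq:Wmk}. The gap is in what you do next. Substituting the induction formula \eqref{eq:Y} for $W_{-n}$ into the double bracket and grinding with your ``three simple moves'' does not close: the first move, $\lbrack E_{j\delta+\alpha_0},W_1\rbrack_q=-qE_{(j+1)\delta}$, produces imaginary root vectors $E_{(j+1)\delta}$ with $j$ up to $n$, and for $j\geq 1$ you then have no way to (a) commute $E_{(j+1)\delta}$ past the $\tilde G_m$'s (Lemma \ref{lem:step2} only gives $\lbrack\tilde G_m,E_\delta\rbrack=0$, not $\lbrack\tilde G_m,E_{j\delta}\rbrack=0$ for $j\geq 2$), nor (b) take the remaining outer commutator with $W_1$, since no formula for $\lbrack E_{j\delta},W_1\rbrack$ is available in the paper's toolkit. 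Moreover the inner $q$-commutator $\lbrack W_{-n},W_1\rbrack_q$ does not decompose as cleanly as the plain commutator computed in Lemma \ref{lem:step1}: splitting $q^3EW_1-q^{-1}W_1E=q^2\lbrack E,W_1\rbrack_q+(q-q^{-1})W_1E$ leaves stray terms that your moves do not absorb. So the obstacle is not ``coefficient bookkeeping'' but a structural shortage of relations.

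The paper's proof avoids all of this by never expanding $W_{-n}$. Using the identity $\lbrack\lbrack a,b\rbrack_q,b\rbrack=\lbrack\lbrack a,b\rbrack,b\rbrack_q$, then Lemma \ref{lem:step1} to replace $\lbrack W_{-n},W_1\rbrack$ by $\lbrack W_0,W_{n+1}\rbrack$, then Lemma \ref{lem:step4} (i.e.\ $\lbrack W_{n+1},W_1\rbrack=0$) to rewrite $\lbrack\lbrack W_0,W_{n+1}\rbrack,W_1\rbrack_q=\lbrack\lbrack W_0,W_1\rbrack_q,W_{n+1}\rbrack$, the double bracket collapses to $-q\lbrack E_\delta,W_{n+1}\rbrack$ by \eqref{eq:dam1}. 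Only at that point is the induction hypothesis invoked --- for $W_{n+1}$, via \eqref{eq:X} --- so that the only commutators needed are with $E_\delta$ itself, which Lemma \ref{lem:step2} and \eqref{eq:dam2} handle, and the reindexing is a single clean shift. Lemmas \ref{lem:step1}--\ref{lem:step4} all hold under your induction hypothesis, so you are entitled to use them as tools rather than merely as ``templates''; doing so is exactly what rescues the argument.
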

\begin{proof} The proof is by induction on $n$. We assume that 
(\ref{eq:X}), (\ref{eq:Y}) hold for $n, n-1,\ldots, 1,0,$ and show that
(\ref{eq:X}), (\ref{eq:Y}) hold for $n+1$. Concerning (\ref{eq:X}),
\begin{align*}
W_{n+2} &= 
 \frac{ q W_1 \tilde G_{n+1} - q^{-1} \tilde G_{n+1} W_1 }{q-q^{-1}}               & {\mbox{ \rm by (\ref{eq:Wkp1})}}
\\
&= W_1 \tilde G_{n+1} - q^{-1} \frac{  \lbrack \tilde G_{n+1}, W_1 \rbrack }{ q-q^{-1}}
\\
&= W_1 \tilde G_{n+1} - \frac{ \lbrack \lbrack  \lbrack \tilde G_{n}, W_0 \rbrack_q, W_1 \rbrack_q, W_1\rbrack }{ (q-q^{-1})^2 (q^2-q^{-2})} 
& {\mbox{ \rm by Definition \ref{def:CE}(v)}}
\\
&= W_1 \tilde G_{n+1} - \frac{ \lbrack  \lbrack W_{-n} , W_1 \rbrack_q, W_1\rbrack }{ (q-q^{-1}) (q^2-q^{-2})} 
 & {\mbox{ \rm by (\ref{eq:Wmk})}}
\\
&= W_1 \tilde G_{n+1} - \frac{ \lbrack  \lbrack W_{-n} , W_1 \rbrack, W_1\rbrack_q }{ (q-q^{-1}) (q^2-q^{-2})}
\\
&= W_1 \tilde G_{n+1} - \frac{ \lbrack  \lbrack W_{0} , W_{n+1} \rbrack, W_1\rbrack_q }{ (q-q^{-1}) (q^2-q^{-2})}
 & {\mbox{ \rm by Lemma \ref{lem:step1}}}
\\
&= W_1 \tilde G_{n+1} - \frac{ \lbrack  \lbrack W_{0} , W_{1} \rbrack_q, W_{n+1} \rbrack }{ (q-q^{-1}) (q^2-q^{-2})}
 & {\mbox{ \rm by Lemma \ref{lem:step4}}}
\\
&= W_1 \tilde G_{n+1} + \frac{q\lbrack  E_\delta, W_{n+1} \rbrack }{ (q-q^{-1}) (q^2-q^{-2})}
 & {\mbox{ \rm by (\ref{eq:dam1})}} 
\\
&= W_1 \tilde G_{n+1} + q \sum_{k=0}^n \frac{ \lbrack E_\delta, E_{k\delta + \alpha_1} \tilde G_{n-k} \rbrack (-1)^k q^k}{(q-q^{-1})^{2k+1}(q^2-q^{-2})}
 & {\mbox{ \rm by (\ref{eq:X}) and induction}}
\\
&= W_1 \tilde G_{n+1} + q \sum_{k=0}^n \frac{ \lbrack E_\delta, E_{k\delta + \alpha_1} \rbrack \tilde G_{n-k}  (-1)^k q^k}{(q-q^{-1})^{2k+1}(q^2-q^{-2})}
 & {\mbox{ \rm by Lemma \ref{lem:step2}}}
\\
&= W_1 \tilde G_{n+1} +\sum_{k=0}^n \frac{ E_{(k+1)\delta + \alpha_1}  \tilde G_{n-k}  (-1)^{k+1} q^{k+1}}{(q-q^{-1})^{2k+2}}
 & {\mbox{ \rm by (\ref{eq:dam2})}} 
\\
&= E_{\alpha_1} \tilde G_{n+1} +\sum_{k=1}^{n+1} \frac{ E_{k\delta + \alpha_1}  \tilde G_{n+1-k}  (-1)^{k} q^{k}}{(q-q^{-1})^{2k}}
\\
&= \sum_{k=0}^{n+1} \frac{ E_{k\delta + \alpha_1}  \tilde G_{n+1-k}  (-1)^{k} q^{k}}{(q-q^{-1})^{2k}}.
\end{align*}
We have shown that (\ref{eq:X}) holds for $n+1$. Concerning (\ref{eq:Y}),
\begin{align*}
W_{-n-1} &= 
 \frac{ q \tilde G_{n+1} W_0 - q^{-1} W_0 \tilde G_{n+1}  }{q-q^{-1}}               & {\mbox{ \rm by (\ref{eq:Wmk})}}
\\
&= W_0 \tilde G_{n+1} - q \frac{  \lbrack W_0, \tilde G_{n+1} \rbrack }{ q-q^{-1}}
\\
&= W_0 \tilde G_{n+1} - q^2 \frac{ \lbrack W_0, \lbrack  W_0, \lbrack W_1, \tilde G_{n} \rbrack_q \rbrack_q \rbrack }{ (q-q^{-1})^2 (q^2-q^{-2})} 
& {\mbox{ \rm by Definition \ref{def:CE}(vi)}}
\\
&= W_0 \tilde G_{n+1} - q^2 \frac{ \lbrack W_0, \lbrack W_0, W_{n+1}  \rbrack_q \rbrack }{ (q-q^{-1}) (q^2-q^{-2})} 
 & {\mbox{ \rm by (\ref{eq:Wkp1})}}
\\
&= W_0 \tilde G_{n+1} - q^2 \frac{ \lbrack W_0, \lbrack W_0, W_{n+1}  \rbrack \rbrack_q }{ (q-q^{-1}) (q^2-q^{-2})}
\\
&= W_0 \tilde G_{n+1} - q^2 \frac{ \lbrack W_0, \lbrack W_{-n} , W_{1} \rbrack \rbrack_q }{ (q-q^{-1}) (q^2-q^{-2})}
 & {\mbox{ \rm by Lemma \ref{lem:step1}}}
\\
&= W_0 \tilde G_{n+1} - q^2 \frac{ \lbrack W_{-n}, \lbrack W_{0} , W_{1} \rbrack_q  \rbrack }{ (q-q^{-1}) (q^2-q^{-2})}
 & {\mbox{ \rm by Lemma \ref{lem:step3}}}
\\
&= W_0 \tilde G_{n+1} + q^3 \frac{\lbrack W_{-n},  E_\delta \rbrack }{ (q-q^{-1}) (q^2-q^{-2})}
 & {\mbox{ \rm by (\ref{eq:dam1})}} 
\\
&= W_0 \tilde G_{n+1} + q^3 \sum_{k=0}^n \frac{ \lbrack E_{k\delta + \alpha_0} \tilde G_{n-k}, E_{\delta}  \rbrack (-1)^k q^{3k}}{(q-q^{-1})^{2k+1}(q^2-q^{-2})}
 & {\mbox{ \rm by (\ref{eq:Y}) and induction}}
\\
&= W_0 \tilde G_{n+1} + q^3 \sum_{k=0}^n \frac{ \lbrack  E_{k\delta + \alpha_0}, E_{\delta} \rbrack \tilde G_{n-k}  (-1)^k q^{3k}}{(q-q^{-1})^{2k+1}(q^2-q^{-2})}
 & {\mbox{ \rm by Lemma \ref{lem:step2}}}
\\
&= W_0 \tilde G_{n+1} +\sum_{k=0}^n \frac{ E_{(k+1)\delta + \alpha_0}  \tilde G_{n-k}  (-1)^{k+1} q^{3k+3}}{(q-q^{-1})^{2k+2}}
 & {\mbox{ \rm by (\ref{eq:dam2})}} 
\\
&= E_{\alpha_0} \tilde G_{n+1} +\sum_{k=1}^{n+1} \frac{ E_{k\delta + \alpha_0}  \tilde G_{n+1-k}  (-1)^{k} q^{3k}}{(q-q^{-1})^{2k}}
\\
&= \sum_{k=0}^{n+1} \frac{ E_{k\delta + \alpha_0}  \tilde G_{n+1-k}  (-1)^{k} q^{3k}}{(q-q^{-1})^{2k}}.
\end{align*}
We have shown that (\ref{eq:Y}) holds for $n+1$. 
\end{proof}

\begin{lemma} 
\label{lem:ww} The following relations hold in $\mathcal U$.
For $n \in \mathbb N$,
\begin{align*}
\lbrack W_0, W_{n+1} \rbrack &= \lbrack W_{-n}, W_1 \rbrack, \qquad \qquad  \lbrack \tilde G_n, E_\delta \rbrack = 0,
\\
\lbrack W_{-n}, W_0\rbrack &= 0, \qquad \qquad \qquad \; \;\lbrack W_{n+1}, W_1\rbrack = 0.
\end{align*}
\end{lemma}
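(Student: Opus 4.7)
The plan is simply to assemble four results that have already been done most of the work. The four identities to prove are exactly the conclusions of Lemmas \ref{lem:step1}, \ref{lem:step2}, \ref{lem:step3}, \ref{lem:step4}, respectively. Each of those lemmas was proved conditionally, under the hypothesis that the generating-function formulas (\ref{eq:X}) and (\ref{eq:Y}) hold for all indices up to $n$.

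The crucial point is that Proposition \ref{prop:alln} has now removed this hypothesis: it asserts that (\ref{eq:X}) and (\ref{eq:Y}) hold in $\mathcal U$ for every $n \in \mathbb N$. So my plan is: fix an arbitrary $n \in \mathbb N$, invoke Proposition \ref{prop:alln} to guarantee that (\ref{eq:X}) and (\ref{eq:Y}) are valid for $n, n-1, \ldots, 1, 0$, and then quote each of Lemmas \ref{lem:step1}--\ref{lem:step4} in turn to obtain the four displayed relations.

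There is no genuine obstacle here; the lemma is a packaging statement that records the unconditional versions of the four step lemmas now that their hypothesis is known to always hold. The only thing worth noting is the mild notational point that the second relation $\lbrack \tilde G_n, E_\delta \rbrack = 0$ is trivial for $n=0$ since $\tilde G_0 = 1$, which is consistent with Lemma \ref{lem:step2} (whose proof, via Lemma \ref{lem:step1}, is vacuous at $n=0$ as well). Thus the proof reduces to one line: apply Proposition \ref{prop:alln} and then Lemmas \ref{lem:step1}--\ref{lem:step4}.
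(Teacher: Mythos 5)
Your proposal is correct and is exactly the paper's proof: the paper deduces Lemma \ref{lem:ww} directly from Lemmas \ref{lem:step1}--\ref{lem:step4} together with Proposition \ref{prop:alln}, which discharges the inductive hypothesis those lemmas required. Nothing further is needed.
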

\begin{proof} By Lemmas \ref{lem:step1}--\ref{lem:step4} and Proposition \ref{prop:alln}.
\end{proof}

\begin{lemma} \label{lem:GWWG} The following relations hold in $\mathcal U$.
For $k \in \mathbb N$,
\begin{enumerate}
\item[\rm (i)] $\lbrack G_{k+1}, W_1\rbrack_q = \lbrack W_1, \tilde G_{k+1} \rbrack_q$;
\item[\rm (ii)] $\lbrack W_0, G_{k+1} \rbrack_q = \lbrack \tilde G_{k+1}, W_0 \rbrack_q$.
\end{enumerate}
\end{lemma}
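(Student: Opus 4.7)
My plan is to substitute the definition \eqref{eq:Gkp1} of $G_{k+1}$ on the left-hand side of each part, then invoke the appropriate defining relation of $\mathcal U$ to collapse everything to an elementary $q$-commutator identity.

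For (i), I would first expand $[G_{k+1}, W_1]_q - [W_1, \tilde G_{k+1}]_q$ as $\bigl([\tilde G_{k+1}, W_1]_q - [W_1, \tilde G_{k+1}]_q\bigr) + \frac{1}{1-q^{-2}}[[W_1, W_{-k}], W_1]_q$. The first piece equals $(q+q^{-1})[\tilde G_{k+1}, W_1]$, since $[X,Y]_q - [Y,X]_q = (q+q^{-1})[X,Y]$. Feeding in Definition \ref{def:CE}(v) (which subsumes (iii) by Note \ref{note:k}) together with $[\tilde G_k, W_0]_q = (q-q^{-1})W_{-k}$ from \eqref{eq:Wmk}, the scalar bookkeeping reduces the goal to $[[W_{-k}, W_1]_q, W_1] = [[W_{-k}, W_1], W_1]_q$. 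This is a special case of the elementary identity $[[A,B]_q, B] = [[A,B], B]_q$, valid in any algebra (both sides expand to $qAB^2 - (q+q^{-1})BAB + q^{-1}B^2A$).

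Part (ii) I would attack analogously. The expansion gives $(q+q^{-1})[W_0, \tilde G_{k+1}]$ plus $\frac{1}{1-q^{-2}}[W_0, [W_1, W_{-k}]]_q$. Using Definition \ref{def:CE}(vi) together with $[W_1, \tilde G_k]_q = (q-q^{-1})W_{k+1}$ from \eqref{eq:Wkp1}, the first piece rewrites as $\frac{1}{1-q^{-2}}[W_0, [W_0, W_{k+1}]_q]$, so the desired cancellation becomes $[W_0, [W_0, W_{k+1}]_q] = [W_0, [W_{-k}, W_1]]_q$. I would apply the companion identity $[A, [A, B]_q] = [A, [A,B]]_q$ to strip the $q$ off the inner bracket on the left, and then invoke $[W_0, W_{k+1}] = [W_{-k}, W_1]$ from Lemma \ref{lem:ww} to conclude.

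The only real obstacle is arithmetic bookkeeping: checking that the factors $(1-q^{-2})$ and $(q^2-q^{-2})$ in Definition \ref{def:CE}(v)--(vi) combine with the $(q-q^{-1})$ prefactors from \eqref{eq:Wmk}--\eqref{eq:Wkp1} to produce the clean constant $\tfrac{1}{1-q^{-2}}$ advertised above. Once that calibration is done, no machinery beyond the two elementary $q$-commutator identities and a single application of Lemma \ref{lem:ww} is needed.
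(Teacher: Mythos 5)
Your proposal is correct and follows essentially the same route as the paper: substitute the definition of $G_{k+1}$, use $[X,Y]_q-[Y,X]_q=(q+q^{-1})[X,Y]$, convert $[\tilde G_{k+1},W_1]$ (resp.\ $[W_0,\tilde G_{k+1}]$) via Definition \ref{def:CE}(v)--(vi) and \eqref{eq:Wmk}--\eqref{eq:Wkp1}, and finish with the elementary identities $[[A,B]_q,B]=[[A,B],B]_q$ and $[A,[A,B]_q]=[A,[A,B]]_q$. The only cosmetic difference is that the paper invokes $[W_0,W_{k+1}]=[W_{-k},W_1]$ at the outset of (ii) (by writing $G_{k+1}=\tilde G_{k+1}+\tfrac{[W_{k+1},W_0]}{1-q^{-2}}$) whereas you invoke it at the end; the scalar calibration you flag does work out exactly as you describe.
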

\begin{proof} (i) We have
\begin{align*}
\lbrack G_{k+1}, W_1\rbrack_q - \lbrack W_1, \tilde G_{k+1} \rbrack_q
&= \biggl\lbrack \tilde G_{k+1} + \frac{\lbrack W_1, W_{-k}\rbrack }{1-q^{-2}}, W_1 \biggr\rbrack_q - \lbrack W_1, \tilde G_{k+1} \rbrack_q
\\
&=(q+q^{-1}) \lbrack \tilde G_{k+1}, W_1 \rbrack - \frac{ \lbrack \lbrack W_{-k}, W_1 \rbrack, W_1 \rbrack_q }{1-q^{-2}}
\\
&=(q+q^{-1}) \lbrack \tilde G_{k+1}, W_1 \rbrack - \frac{ \lbrack \lbrack W_{-k}, W_1 \rbrack_q, W_1 \rbrack }{1-q^{-2}}
\\
&=(q+q^{-1}) \lbrack \tilde G_{k+1}, W_1 \rbrack - \frac{ \lbrack \lbrack \lbrack \tilde G_k, W_0\rbrack_q , W_1 \rbrack_q, W_1 \rbrack }{(1-q^{-2})(q-q^{-1})}
\\
&=0.
\end{align*}
\noindent (ii) We have
\begin{align*}
\lbrack W_0, G_{k+1}\rbrack_q - \lbrack \tilde G_{k+1}, W_0 \rbrack_q
&= \biggl\lbrack W_0, \tilde G_{k+1} + \frac{\lbrack W_{k+1}, W_{0}\rbrack }{1-q^{-2}} \biggr\rbrack_q - \lbrack \tilde G_{k+1}, W_0 \rbrack_q
\\
&=(q+q^{-1}) \lbrack W_0, \tilde G_{k+1} \rbrack - \frac{ \lbrack  W_0, \lbrack W_0, W_{k+1} \rbrack \rbrack_q }{1-q^{-2}}
\\
&=(q+q^{-1}) \lbrack W_0, \tilde G_{k+1} \rbrack - \frac{ \lbrack  W_0, \lbrack W_0, W_{k+1} \rbrack_q \rbrack }{1-q^{-2}}
\\
&=(q+q^{-1}) \lbrack W_0,\tilde G_{k+1} \rbrack - \frac{ \lbrack W_0, \lbrack W_0, \lbrack W_1, \tilde G_k \rbrack_q  \rbrack_q  \rbrack }{(1-q^{-2})(q-q^{-1})}
\\
&=0.
\end{align*}

\end{proof}

\section{Generating functions}

The alternating generators of $\mathcal U$ are displayed in
(\ref{eq:WWGG}).  In the previous section we described how these generators are related to $W_0$ and $W_1$. Our next goal is to describe how the alternating generators 
are related to each other. It is convenient to use generating functions.

\begin{definition}\label{def:gf} 
\rm
We define some generating functions in an indeterminate $t$. Referring to {\rm (\ref{eq:WWGG})},
\begin{align*}
&G(t)= \sum_{n \in \mathbb N} G_n t^n, \qquad \qquad \qquad \tilde G(t) = \sum_{n\in \mathbb N} \tilde G_n t^n,
\\
&W^-(t) = \sum_{n \in \mathbb N} W_{-n} t^n, \qquad \qquad W^+(t) = \sum_{n\in \mathbb N} W_{n+1} t^n.
\end{align*}
\end{definition}

\begin{lemma}
\label{lem:WWGen}
For the algebra $\mathcal U$,
\begin{align*}
&
\frac{ \lbrack  W_0, G(t) \rbrack_q }{q-q^{-1}} = W^-(t), \qquad \qquad \frac{ \lbrack \tilde G(t), W_0 \rbrack_q }{q-q^{-1}} = W^-(t),
\\
&\lbrack W_0, W^-(t) \rbrack = 0, \qquad \qquad \qquad \frac{\lbrack W_0, W^+(t) \rbrack}{1-q^{-2}} = t^{-1}(\tilde G(t)-G(t))
\end{align*}
and
\begin{align*}
&
\frac{ \lbrack   G(t), W_1 \rbrack_q }{q-q^{-1}} = W^+(t), \qquad \qquad \frac{ \lbrack W_1, \tilde G(t) \rbrack_q }{q-q^{-1}} = W^+(t),
\\
&\lbrack W_1, W^+(t) \rbrack = 0, \qquad \qquad \qquad \frac{\lbrack W_1, W^-(t) \rbrack}{1-q^{-2}} = t^{-1}( G(t)-\tilde G(t)).
\end{align*}
\end{lemma}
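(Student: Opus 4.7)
My plan is to verify each of the eight generating-function identities by extracting the coefficient of $t^n$ and reducing to a term-by-term identity that is already available. The lemma is essentially a compact repackaging, so no substantive new computation is needed; the work is just to identify, for each claim, which earlier ingredient supplies it.

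First I would dispose of the four easiest identities. The two $\tilde G$ identities $\frac{[\tilde G(t),W_0]_q}{q-q^{-1}} = W^-(t)$ and $\frac{[W_1,\tilde G(t)]_q}{q-q^{-1}} = W^+(t)$ are immediate from the defining formulas \eqref{eq:Wmk} and \eqref{eq:Wkp1} of Definition \ref{def:wwg}, by summing against $t^n$. The two commutation identities $[W_0, W^-(t)] = 0$ and $[W_1, W^+(t)] = 0$ are likewise immediate from Lemma \ref{lem:ww}, which provides $[W_{-n},W_0]=0$ and $[W_{n+1},W_1]=0$ for each $n \in \mathbb{N}$.

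Next I would handle the two "difference" identities. The definition \eqref{eq:Gkp1} rearranges to $G_{k+1}-\tilde G_{k+1} = \frac{[W_1,W_{-k}]}{1-q^{-2}}$, so summing over $k \in \mathbb{N}$ and using $G_0 = \tilde G_0 = 1$ gives
\[
\frac{[W_1,W^-(t)]}{1-q^{-2}} = t^{-1}\bigl(G(t)-\tilde G(t)\bigr).
\]
The companion identity $\frac{[W_0,W^+(t)]}{1-q^{-2}} = t^{-1}(\tilde G(t)-G(t))$ follows upon noting that Lemma \ref{lem:ww} supplies $[W_0,W_{k+1}] = [W_{-k},W_1] = -[W_1,W_{-k}]$, so $\frac{[W_0,W_{k+1}]}{1-q^{-2}} = \tilde G_{k+1}-G_{k+1}$, which again sums to the desired relation.

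The remaining two identities, $\frac{[W_0,G(t)]_q}{q-q^{-1}} = W^-(t)$ and $\frac{[G(t),W_1]_q}{q-q^{-1}} = W^+(t)$, come directly from Lemma \ref{lem:GWWG}: part (ii) rewritten as $\frac{[W_0,G_{k+1}]_q}{q-q^{-1}} = \frac{[\tilde G_{k+1},W_0]_q}{q-q^{-1}} = W_{-(k+1)}$ and part (i) as $\frac{[G_{k+1},W_1]_q}{q-q^{-1}} = \frac{[W_1,\tilde G_{k+1}]_q}{q-q^{-1}} = W_{k+2}$. The only point requiring separate verification is the constant ($n=0$) coefficient, where $G_0 = 1$ forces $\frac{[W_0,G_0]_q}{q-q^{-1}} = W_0 = W_{-0}$ and $\frac{[G_0,W_1]_q}{q-q^{-1}} = W_1$, matching the constant terms of $W^-(t)$ and $W^+(t)$ respectively. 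I do not anticipate any serious obstacle: the lemma is bookkeeping built on top of Definition \ref{def:wwg}, Lemma \ref{lem:ww}, and Lemma \ref{lem:GWWG}.
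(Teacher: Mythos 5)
Your proof is correct and follows exactly the route the paper takes: its entire proof of Lemma \ref{lem:WWGen} is the one-line citation ``Use Definition \ref{def:wwg} and Lemmas \ref{lem:ww}, \ref{lem:GWWG},'' and your write-up simply makes explicit which of those ingredients supplies each of the eight identities, including the correct handling of the constant coefficients via $G_0=\tilde G_0=1$.
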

\begin{proof} Use Definition
\ref{def:wwg} and Lemmas
\ref{lem:ww},
\ref{lem:GWWG}.
\end{proof}

\noindent For the rest of this section, let $s$ denote an indeterminate that commutes with $t$.
\begin{lemma} 
\label{lem:GFmain}
 For the algebra $\mathcal U$,
\begin{align*}
&\lbrack  W^-(s), W^-(t) \rbrack = 0, 
\qquad 
\lbrack W^+(s),  W^+(t) \rbrack = 0,
\\
&\lbrack  W^-(s), W^+(t) \rbrack 
+
\lbrack W^+(s), W^-(t) \rbrack = 0,
\\
&s \lbrack  W^-(s), G(t) \rbrack 
+
t \lbrack  G(s),  W^-(t) \rbrack = 0,
\\
&s \lbrack  W^-(s), {\tilde G}(t) \rbrack 
+
t \lbrack  \tilde G(s),  W^-(t) \rbrack = 0,
\\
&s \lbrack   W^+(s),  G(t) \rbrack
+
t \lbrack   G(s), W^+(t) \rbrack = 0,
\\
&s \lbrack   W^+(s), {\tilde G}(t) \rbrack
+
t \lbrack \tilde G(s), W^+(t) \rbrack = 0,
\\
&\lbrack   G(s), G(t) \rbrack = 0, 
\qquad 
\lbrack  {\tilde G}(s),  {\tilde G}(t) \rbrack = 0,
\\
&\lbrack   {\tilde G}(s), G(t) \rbrack +
\lbrack   G(s), {\tilde G}(t) \rbrack = 0
\end{align*}
and also
\begin{align*}
&\lbrack W^-(s), G(t)\rbrack_q = 
\lbrack  W^-(t), G(s)\rbrack_q,
\qquad \quad
\lbrack G(s),  W^+(t)\rbrack_q = 
\lbrack  G(t), W^+(s)\rbrack_q,
\\
&
\lbrack  {\tilde G}(s),  W^-(t)\rbrack_q = 
\lbrack  {\tilde G}(t),  W^-(s)\rbrack_q,
\qquad \quad 
\lbrack W^+(s),  {\tilde G}(t)\rbrack_q = 
\lbrack  W^+(t),  {\tilde G}(s)\rbrack_q,
\\
&t^{-1}\lbrack G(s), {\tilde G}(t)\rbrack -
s^{-1}\lbrack  G(t), {\tilde G}(s)\rbrack =
q\lbrack W^-(t), W^+(s)\rbrack_q-
q\lbrack W^-(s), W^+(t)\rbrack_q,
\\
&t^{-1}\lbrack {\tilde G}(s),   G(t)\rbrack -
s^{-1}\lbrack   {\tilde G}(t),  G(s)\rbrack =
q \lbrack  W^+(t),  W^-(s)\rbrack_q-
q\lbrack W^+(s),  W^-(t)\rbrack_q,
\\
&\lbrack G(s),  {\tilde G}(t)\rbrack_q -
\lbrack G(t), {\tilde G}(s)\rbrack_q =
qt\lbrack  W^-(t),  W^+(s)\rbrack-
qs\lbrack  W^-(s),  W^+(t)\rbrack,
\\
&\lbrack  {\tilde G}(s),   G(t)\rbrack_q -
\lbrack {\tilde G}(t),   G(s)\rbrack_q =
qt \lbrack  W^+(t), W^-(s)\rbrack-
qs\lbrack  W^+(s),  W^-(t)\rbrack.
\end{align*}
\end{lemma}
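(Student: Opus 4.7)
My plan is to derive each identity by combining the basic relations in Lemma \ref{lem:WWGen} with the $q$-Jacobi identity
\begin{equation*}
[A,[B,C]_q] = [[A,B],C]_q + [B,[A,C]]_q,
\end{equation*}
together with the generating-function form $[\tilde G(s), \tilde G(t)] = 0$ of Definition \ref{def:CE}(vii). The strategy in each case is to use Lemma \ref{lem:WWGen} to replace one copy of an alternating generating function by a $q$-commutator involving $W_0$ or $W_1$, then push the outer commutator inside via $q$-Jacobi; the innermost terms that result either vanish by the $W_0$- and $W_1$-commutations in Lemma \ref{lem:WWGen} or by the mutual commutativity of the $\tilde G$'s.

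For the first block of ordinary-commutator identities I would proceed as follows. To prove $[W^-(s), W^-(t)] = 0$, write $W^-(t) = (q-q^{-1})^{-1}[\tilde G(t), W_0]_q$, apply $q$-Jacobi, and use $[W^-(s), W_0]=0$ to reduce the problem to $[[W^-(s), \tilde G(t)], W_0]_q = 0$; expand $W^-(s)$ similarly and finish by $[\tilde G(s), \tilde G(t)] = 0$. The identity $[W^+(s), W^+(t)] = 0$ is the $W_1$-symmetric version, and $[W^-(s), W^+(t)] + [W^+(s), W^-(t)] = 0$ follows by reducing each bracket via Lemma \ref{lem:WWGen} and exchanging $s \leftrightarrow t$. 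The $G$-$\tilde G$ relations such as $s[W^-(s), G(t)] + t[G(s), W^-(t)] = 0$ are handled by writing $G(t) = \tilde G(t) + t(1-q^{-2})^{-1}[W_1, W^-(t)]$ (from Definition \ref{def:wwg} recast as a generating function), which converts those relations to combinations of the already-established $W^-$-$W^-$ and mixed identities; the factors of $s$ and $t$ arise from this substitution. The relations $[G(s),G(t)]=0$ and $[\tilde G(s), G(t)] + [G(s), \tilde G(t)]=0$ then follow formally.

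For the $q$-commutator block, the four identities of the form $[W^-(s), G(t)]_q = [W^-(t), G(s)]_q$ should fall out by rewriting one of the generating functions via Lemma \ref{lem:WWGen} and applying the $q$-Jacobi identity to produce an expression that is manifestly symmetric in $s \leftrightarrow t$. The four terminal identities involving $t^{-1}$ and $s^{-1}$ are the subtlest; my plan is to obtain them by computing $[G(s), \tilde G(t)]$ (and variants) in two different ways and taking differences, exploiting the relation $[W_0, W^+(t)] = (1-q^{-2})t^{-1}(\tilde G(t) - G(t))$ from Lemma \ref{lem:WWGen} to convert $\tilde G - G$ into a commutator whose inversion in $t$ produces the required $t^{-1}$. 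The main obstacle will be the interdependence of the identities: several cannot be proven in isolation but only after others have been established, so pinning down a correct derivation order, and keeping careful track of signs, powers of $q^{\pm 1}$, and powers of $s, t$, is where the real work lies; once that order is fixed the derivations become routine.
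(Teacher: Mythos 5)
Your overall strategy is the paper's: start from $\lbrack \tilde G(s),\tilde G(t)\rbrack=0$ (Definition \ref{def:CE}(vii)) and bootstrap the remaining identities by bracketing with $W_0$ and $W_1$ via Lemma \ref{lem:WWGen}, in a carefully chosen order. Your reduction of $s\lbrack W^-(s),G(t)\rbrack+t\lbrack G(s),W^-(t)\rbrack$ via $G(t)=\tilde G(t)+\tfrac{t}{1-q^{-2}}\lbrack W_1,W^-(t)\rbrack$ is in fact one of the paper's canonical relations ($\lbrack W_1,A(s,t)\rbrack/(1-q^{-2})=(D(s,t)-E(s,t))/(st)$ in the notation of Appendix A). However, the one derivation you spell out in detail does not close. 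For $\lbrack W^-(s),W^-(t)\rbrack=0$ you correctly reduce to $\lbrack\lbrack W^-(s),\tilde G(t)\rbrack,W_0\rbrack_q$, but the next step fails: expanding $W^-(s)=(q-q^{-1})^{-1}\lbrack\tilde G(s),W_0\rbrack_q$ and applying $q$-Jacobi leaves, after the $\lbrack\tilde G(s),\tilde G(t)\rbrack$ term drops, the residue $\lbrack\tilde G(s),\lbrack W_0,\tilde G(t)\rbrack\rbrack_q$. The inner bracket here is the \emph{ordinary} commutator $\lbrack W_0,\tilde G(t)\rbrack$, which Lemma \ref{lem:WWGen} does not control (only the $q$-commutator $\lbrack\tilde G(t),W_0\rbrack_q$ equals $(q-q^{-1})W^-(t)$); unwinding it gives $\lbrack W^-(s),\tilde G(t)\rbrack=q^{-1}\lbrack\tilde G(s),W_0\tilde G(t)\rbrack_q-q^{-1}\lbrack\tilde G(s),W^-(t)\rbrack_q$, which is not zero. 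Indeed $\lbrack W^-(s),\tilde G(t)\rbrack$ does not vanish individually — only the weighted combination $s\lbrack W^-(s),\tilde G(t)\rbrack+t\lbrack\tilde G(s),W^-(t)\rbrack$ does — so no argument can "finish" at that point.

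The repair is exactly the ordering issue you flag but defer, and it is the substance of the lemma. One must first establish the $q$-commutator symmetry $\lbrack\tilde G(s),W^-(t)\rbrack_q=\lbrack\tilde G(t),W^-(s)\rbrack_q$ (an identity from your \emph{second} block), which follows from $\lbrack\lbrack\tilde G(s),\tilde G(t)\rbrack,W_0\rbrack_{q^2}=(q-q^{-1})\bigl(\lbrack\tilde G(s),W^-(t)\rbrack_q-\lbrack\tilde G(t),W^-(s)\rbrack_q\bigr)$ — note the $q^2$-commutator, which your single $q$-Jacobi identity does not produce — and only then obtain $\lbrack W^-(s),W^-(t)\rbrack=0$ by bracketing that symmetry once more with $W_0$. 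So the two blocks of identities must be interleaved, not proved block by block. The paper organizes this by introducing eighteen defect generating functions $A(s,t),\dots,S(s,t)$ and thirty-eight canonical relations (Appendix A), with the explicit induction order $I,M,N,A,B,Q,D,E,F,G,R,S,H,K,L,P,C,J$; without exhibiting such an order and the relations that realize it, the proposal is a plan rather than a proof.
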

\begin{proof} 
 We refer to the generating functions $A(s,t), B(s,t),\ldots ,S(s,t)$ from Appendix A.
The present lemma asserts that for the algebra $\mathcal U$ these generating functions are all zero.
 To verify this assertion, we refer to the canonical relations in Appendix A.
We will use induction with respect to the linear order
\begin{align*}
& I(s,t), M(s,t), N(s,t), A(s,t), B(s,t), Q(s,t), D(s,t), E(s,t), F(s,t),
\\
& G(s,t),  R(s,t), S(s,t), H(s,t), K(s,t), L(s,t), P(s,t), C(s,t), J(s,t).
\end{align*}
For each element in this linear order besides $I(s,t)$, 
there exists a canonical relation that expresses the given element in terms of the previous elements in the
linear order. So in $\mathcal U$ the given element is zero, provided that in $\mathcal U$ every previous element is zero.
Note that in $\mathcal U$ we have $I(s,t)=0$  by Definition \ref{def:CE}(vii). By these comments and induction, in $\mathcal U$ every element in the linear order is zero.
We have shown that in $\mathcal U$ each of $A(s,t)$, $B(s,t), \ldots, S(s,t)$ is zero.
\end{proof}

\section{The main results}
In this section we present our main results, which are Theorem \ref{thm:Fin} and Corollary \ref{cor:altExt}.
 Recall the alternating generators
(\ref{eq:WWGG})
for $\mathcal U$.
\begin{lemma}
\label{lem:List}
The following relations hold in $\mathcal U$. For $k, \ell \in \mathbb N$ we have
\begin{align}
&
 \lbrack  W_0,  W_{k+1}\rbrack= 
\lbrack  W_{-k},  W_{1}\rbrack=
(1-q^{-2})({\tilde G}_{k+1} -  G_{k+1}),
\label{eq:n3p1vv}
\\
&
\lbrack  W_0,  G_{k+1}\rbrack_q= 
\lbrack {{\tilde G}}_{k+1},  W_{0}\rbrack_q= 
 (q-q^{-1})W_{-k-1},
\label{eq:n3p2vv}
\\
&
\lbrack G_{k+1},  W_{1}\rbrack_q= 
\lbrack  W_{1}, { {\tilde G}}_{k+1}\rbrack_q= 
(q-q^{-1}) W_{k+2},
\label{eq:n3p3vv}
\\
&
\lbrack  W_{-k},  W_{-\ell}\rbrack=0,  \qquad 
\lbrack  W_{k+1},  W_{\ell+1}\rbrack= 0,
\label{eq:n3p4vv}
\\
&
\lbrack  W_{-k},  W_{\ell+1}\rbrack+
\lbrack W_{k+1},  W_{-\ell}\rbrack= 0,
\label{eq:n3p5vv}
\\
&
\lbrack  W_{-k},  G_{\ell+1}\rbrack+
\lbrack G_{k+1},  W_{-\ell}\rbrack= 0,
\label{eq:n3p6vv}
\\
&
\lbrack W_{-k},  {\tilde G}_{\ell+1}\rbrack+
\lbrack  {\tilde G}_{k+1},  W_{-\ell}\rbrack= 0,
\label{eq:n3p7vv}
\\
&
\lbrack  W_{k+1},  G_{\ell+1}\rbrack+
\lbrack   G_{k+1}, W_{\ell+1}\rbrack= 0,
\label{eq:n3p8vv}
\\
&
\lbrack  W_{k+1},  {\tilde G}_{\ell+1}\rbrack+
\lbrack  {\tilde G}_{k+1},  W_{\ell+1}\rbrack= 0,
\label{eq:n3p9vv}
\\
&
\lbrack  G_{k+1},  G_{\ell+1}\rbrack=0,
\qquad 
\lbrack {\tilde G}_{k+1},  {\tilde G}_{\ell+1}\rbrack= 0,
\label{eq:n3p10vv}
\\
&
\lbrack {\tilde G}_{k+1},  G_{\ell+1}\rbrack+
\lbrack  G_{k+1},  {\tilde G}_{\ell+1}\rbrack= 0
\label{eq:n3p11v}
\end{align}
and also
\begin{align}
&\lbrack W_{-k}, G_{\ell}\rbrack_q = 
\lbrack W_{-\ell}, G_{k}\rbrack_q,
\qquad \quad
\lbrack G_k, W_{\ell+1}\rbrack_q = 
\lbrack G_\ell, W_{k+1}\rbrack_q,
\label{eq:ngg1}
\\
&
\lbrack \tilde G_k, W_{-\ell}\rbrack_q = 
\lbrack \tilde G_\ell, W_{-k}\rbrack_q,
\qquad \quad 
\lbrack W_{\ell+1}, \tilde G_{k}\rbrack_q = 
\lbrack W_{k+1}, \tilde G_{\ell}\rbrack_q,
\label{eq:ngg2}
\\
&\lbrack G_{k}, \tilde G_{\ell+1}\rbrack -
\lbrack G_{\ell}, \tilde G_{k+1}\rbrack =
q\lbrack W_{-\ell}, W_{k+1}\rbrack_q-
q\lbrack W_{-k}, W_{\ell+1}\rbrack_q,
\label{eq:ngg3}
\\
&\lbrack \tilde G_{k},  G_{\ell+1}\rbrack -
\lbrack \tilde G_{\ell},  G_{k+1}\rbrack =
q \lbrack W_{\ell+1}, W_{-k}\rbrack_q-
q\lbrack W_{k+1}, W_{-\ell}\rbrack_q,
\label{eq:ngg4}
\\
&\lbrack G_{k+1}, \tilde G_{\ell+1}\rbrack_q -
\lbrack G_{\ell+1}, \tilde G_{k+1}\rbrack_q =
q\lbrack W_{-\ell}, W_{k+2}\rbrack-
q\lbrack W_{-k}, W_{\ell+2}\rbrack,
\label{eq:ngg5}
\\
&\lbrack \tilde G_{k+1},  G_{\ell+1}\rbrack_q -
\lbrack \tilde G_{\ell+1},  G_{k+1}\rbrack_q =
q \lbrack W_{\ell+1}, W_{-k-1}\rbrack-
q\lbrack W_{k+1}, W_{-\ell-1}\rbrack.
\label{eq:ngg6}
\end{align}
\end{lemma}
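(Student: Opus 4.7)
The plan is to obtain every relation in the lemma by extracting an appropriate coefficient $[s^k t^\ell]$ (or $[t^k]$) from the generating-function identities already established in Lemmas \ref{lem:WWGen} and \ref{lem:GFmain}. So the whole proof will be, essentially, a coefficient-by-coefficient translation of the generating-function statements, with no new algebraic input required.

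First I would handle the linear relations \eqref{eq:n3p1vv}--\eqref{eq:n3p3vv} linking $W_0, W_1$ to the alternating generators, using Lemma \ref{lem:WWGen}. The identity $\lbrack W_0, W^+(t)\rbrack = (1-q^{-2}) t^{-1}(\tilde G(t) - G(t))$, combined with $\tilde G_0 = G_0 = 1$, gives the coefficient of $t^k$ on the right as $(1-q^{-2})(\tilde G_{k+1} - G_{k+1})$, yielding the first equality of \eqref{eq:n3p1vv}; the second equality follows symmetrically from $\lbrack W_1, W^-(t)\rbrack/(1-q^{-2}) = t^{-1}(G(t) - \tilde G(t))$, and the equality of the two left-hand sides of \eqref{eq:n3p1vv} is already in Lemma \ref{lem:ww}. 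Relations \eqref{eq:n3p2vv} and \eqref{eq:n3p3vv} are then read off as the coefficient of $t^{k+1}$ in the four $q$-commutator identities of Lemma \ref{lem:WWGen}.

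Next, for the bracket relations \eqref{eq:n3p4vv}--\eqref{eq:n3p11v}, each corresponds to one of the commutator identities in the first block of Lemma \ref{lem:GFmain}. The pure commutator identities such as $\lbrack W^-(s), W^-(t)\rbrack = 0$ give \eqref{eq:n3p4vv} directly from $[s^k t^\ell]$. The identities carrying an explicit factor of $s$ or $t$, for instance $s\lbrack W^-(s), G(t)\rbrack + t\lbrack G(s), W^-(t)\rbrack = 0$, require the coefficient $[s^{k+1} t^{\ell+1}]$; the $s, t$ factors precisely realize the index shifts present in \eqref{eq:n3p6vv}--\eqref{eq:n3p9vv}. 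The relations \eqref{eq:n3p10vv}, \eqref{eq:n3p11v} follow from $\lbrack G(s), G(t)\rbrack = 0$, $\lbrack \tilde G(s), \tilde G(t)\rbrack = 0$, and $\lbrack \tilde G(s), G(t)\rbrack + \lbrack G(s), \tilde G(t)\rbrack = 0$.

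Finally, the six $q$-commutator relations \eqref{eq:ngg1}--\eqref{eq:ngg6} mirror the second block in Lemma \ref{lem:GFmain}. The first four come directly from $[s^k t^\ell]$. For \eqref{eq:ngg3} and \eqref{eq:ngg4}, the $s^{-1}, t^{-1}$ factors on the left produce index shifts so that $[s^k t^\ell]$ yields brackets such as $\lbrack G_k, \tilde G_{\ell+1}\rbrack - \lbrack G_\ell, \tilde G_{k+1}\rbrack$, while the right-hand sides land on the claimed $q$-commutators after the same extraction; \eqref{eq:ngg5} and \eqref{eq:ngg6} are the $q$-commutator analogues. The main thing to be careful about will be the index bookkeeping around the $s^{\pm 1}$ and $t^{\pm 1}$ factors in the GF identities of Lemma \ref{lem:GFmain} — getting the shifts right so that the monomial extracted matches the indices displayed in the lemma.
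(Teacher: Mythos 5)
Your proposal is correct and follows essentially the same route as the paper: the paper derives \eqref{eq:n3p1vv}--\eqref{eq:n3p3vv} from Definition \ref{def:wwg} and Lemmas \ref{lem:ww}, \ref{lem:GWWG} (the content you recover by extracting coefficients from Lemma \ref{lem:WWGen}), and obtains \eqref{eq:n3p4vv}--\eqref{eq:ngg6} by reading off coefficients of $s^kt^\ell$ from Definition \ref{def:gf} and Lemma \ref{lem:GFmain}, exactly as you describe. Your index bookkeeping (the shifts coming from the $s$, $t$, $s^{-1}$, $t^{-1}$ factors and the vanishing of the $G_0=\tilde G_0=1$ terms) checks out.
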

\begin{proof} The relations (\ref{eq:n3p1vv})--(\ref{eq:n3p3vv}) are from
Definition \ref{def:wwg} and Lemmas 
\ref{lem:ww},
\ref{lem:GWWG}.
The relations (\ref{eq:n3p4vv})--(\ref{eq:ngg6}) 
follow from Definition
\ref{def:gf} and
 Lemma \ref{lem:GFmain}.
\end{proof}

\begin{theorem}
\label{thm:Fin}
The algebra $\mathcal U$ has a presentation by generators
\begin{align*}
\lbrace W_{-k} \rbrace_{k \in \mathbb N}, \qquad
\lbrace W_{k+1} \rbrace_{k \in \mathbb N}, \qquad
\lbrace G_{k+1} \rbrace_{k \in \mathbb N}, \qquad
\lbrace \tilde G_{k+1} \rbrace_{k \in \mathbb N}
\end{align*}
and the relations in Lemma \ref{lem:List}.
\end{theorem}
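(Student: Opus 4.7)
Let $\mathcal A$ denote the associative algebra with the presentation given in the statement. Since every relation in Lemma \ref{lem:List} holds in $\mathcal U$, the assignment of each formal generator of $\mathcal A$ to its like-named element of $\mathcal U$ extends uniquely to an algebra homomorphism $\phi : \mathcal A \to \mathcal U$. By Definition \ref{def:CE}, $\mathcal U$ is generated by $W_0, W_1$, and $\lbrace \tilde G_{k+1} \rbrace_{k \in \mathbb N}$; since $\tilde G_0 = 1$, Definition \ref{def:wwg} identifies $W_0 = W_{-0}$ and identifies $W_1$ with the $k=0$ specialization of $W_{k+1}$, so $\phi$ is surjective. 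The plan is to construct an inverse homomorphism $\psi : \mathcal U \to \mathcal A$ defined on generators by $W_0 \mapsto W_{-0}$, $W_1 \mapsto W_1$, $\tilde G_{k+1} \mapsto \tilde G_{k+1}$.

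The central task is to verify that the defining relations (i)--(vii) of Definition \ref{def:CE} follow from the relations in Lemma \ref{lem:List}, so that $\psi$ is well defined. Relation (vii) is immediate from \eqref{eq:n3p10vv}. For (v), I would first extend \eqref{eq:n3p2vv} to include $k=0$ (using $\tilde G_0 = 1$) to obtain $\lbrack \tilde G_k, W_0 \rbrack_q = (q-q^{-1}) W_{-k}$ for all $k \in \mathbb N$, and substitute this into the right-hand side. Expanding the inner $q$-commutator via the identity $\lbrack W_{-k}, W_1 \rbrack_q = q \lbrack W_{-k}, W_1 \rbrack + (q-q^{-1}) W_1 W_{-k}$, applying \eqref{eq:n3p1vv} to rewrite $\lbrack W_{-k}, W_1 \rbrack = (1-q^{-2})(\tilde G_{k+1} - G_{k+1})$, and using \eqref{eq:n3p3vv} to read off $\lbrack \tilde G_{k+1}, W_1 \rbrack$, both sides collapse to equal scalar multiples of $W_1 \tilde G_{k+1} - W_{k+2}$. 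Relation (vi) is dual, and relations (iii), (iv) are the $k=0$ specializations of (v), (vi).

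The main obstacle is verifying the $q$-Serre relations (i) and (ii). For (i), set $\Delta := \tilde G_1 - G_1$, so that \eqref{eq:n3p1vv} at $k=0$ gives $\lbrack W_0, W_1 \rbrack = (1-q^{-2}) \Delta$. Using the telescoping identity at the end of Section 2, the triple bracket $\lbrack W_0, \lbrack W_0, \lbrack W_0, W_1 \rbrack_q \rbrack_{q^{-1}} \rbrack$ equals $W_0^3 W_1 - \lbrack 3 \rbrack_q W_0^2 W_1 W_0 + \lbrack 3 \rbrack_q W_0 W_1 W_0^2 - W_1 W_0^3$; moving all $W_1$'s to a common position via $W_0 W_1 - W_1 W_0 = (1-q^{-2}) \Delta$ reduces this to $(1-q^{-2}) \bigl( W_0^2 \Delta - (q^2 + q^{-2}) W_0 \Delta W_0 + \Delta W_0^2 \bigr)$. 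I would then split $\Delta = \tilde G_1 - G_1$ and apply the $q$-commutations $\tilde G_1 W_0 = q^{-2} W_0 \tilde G_1 + (1-q^{-2}) W_{-1}$ and $W_0 G_1 = q^{-2} G_1 W_0 + (1-q^{-2}) W_{-1}$ (both from the $k=0$ case of \eqref{eq:n3p2vv}) iteratively; the coefficients of $W_0^2 \tilde G_1$ and $G_1 W_0^2$ in the two halves vanish identically, and the remaining terms reduce, via $\lbrack W_0, W_{-1} \rbrack = 0$ from \eqref{eq:n3p4vv}, to the same multiple of $W_0 W_{-1}$ in each half, so their difference is zero. Relation (ii) follows by the symmetric argument interchanging the roles of $W_0$ and $W_1$.

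With $\psi$ well defined, the theorem follows from checking the compositions. The composition $\phi \circ \psi$ fixes $W_0, W_1, \tilde G_{k+1}$, hence is the identity on $\mathcal U$. For $\psi \circ \phi$, the action is the identity on $W_{-0}, W_1, \tilde G_{k+1}$, and for the remaining $W_{-k-1}, W_{k+2}, G_{k+1}$ the claim holds because in $\mathcal U$ these are given by the explicit formulas of Definition \ref{def:wwg}, and the very same formulas are forced in $\mathcal A$ upon rearrangement of \eqref{eq:n3p2vv}, \eqref{eq:n3p3vv}, and \eqref{eq:n3p1vv} respectively. Therefore $\phi$ and $\psi$ are mutually inverse isomorphisms, completing the proof.
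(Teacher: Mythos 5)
Your proposal is correct and follows essentially the same route as the paper: both reduce the theorem to showing that the relations of Definition \ref{def:CE} are consequences of those in Lemma \ref{lem:List}, deriving (iii)--(vi) from \eqref{eq:n3p1vv}--\eqref{eq:n3p3vv}, (vii) from \eqref{eq:n3p10vv}, and the $q$-Serre relations from \eqref{eq:n3p4vv} at $k=0$, $\ell=1$ together with the $q$-commutation rules. The only cosmetic difference is that you verify the $q$-Serre relations by expanding the cubic expression directly and reducing to $\lbrack W_0, W_{-1}\rbrack=0$, whereas the paper runs the same computation in reverse, starting from $\lbrack W_0, W_{-1}\rbrack=0$ and eliminating $W_{-1}$; the ingredients are identical.
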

\begin{proof} It suffices to show that the relations in
Definition 
\ref{def:CE}
are implied by the relations in Lemma \ref{lem:List}. 
The relation (iii)  in Definition \ref{def:CE} is obtained from the equation on the left in
(\ref{eq:n3p3vv}) at $k=0$, by eliminating $G_{1}$ using
$\lbrack W_{0}, W_1 \rbrack = (1-q^{-2}) (\tilde G_{1} - G_{1})$. 
The relation (iv) in Definition \ref{def:CE} is obtained from the equation on the left in
(\ref{eq:n3p2vv}) at $k=0$, by eliminating $G_{1}$ using
$\lbrack W_0, W_{1}\rbrack = (1-q^{-2}) (\tilde G_{1} - G_{1})$.
For $k\geq 1$
the relation (v) in Definition \ref{def:CE} is obtained from the equation on the left in
(\ref{eq:n3p3vv}), by eliminating $G_{k+1}$ using
$\lbrack W_{-k}, W_1 \rbrack = (1-q^{-2}) (\tilde G_{k+1} - G_{k+1})$ 
and evaluating the result using
$\lbrack  \tilde G_k, W_0\rbrack_q = (q-q^{-1}) W_{-k}$.
For $k\geq 1$
the relation (vi) in Definition \ref{def:CE} is obtained from the equation on the left in
(\ref{eq:n3p2vv}), by eliminating $G_{k+1}$ using
$\lbrack W_0, W_{k+1}\rbrack = (1-q^{-2}) (\tilde G_{k+1} - G_{k+1})$ 
and evaluating the result using
$\lbrack W_1, \tilde G_k \rbrack_q = (q-q^{-1}) W_{k+1}$.
The relation (vii) in Definition \ref{def:CE} is from 
(\ref{eq:n3p10vv}).
The relation (i) in Definition \ref{def:CE} is obtained from $\lbrack W_0, W_{-1} \rbrack=0$, by eliminating
$W_{-1}$ using $\lbrack \tilde G_1, W_0 \rbrack_q = (q-q^{-1})W_{-1}$ and evaluating the result using
Definition \ref{def:CE}(iv). 
The relation (ii) in Definition \ref{def:CE} is obtained from $\lbrack W_1, W_{2} \rbrack=0$, by eliminating
$W_{2}$ using $\lbrack W_1, \tilde G_1 \rbrack_q = (q-q^{-1})W_{2}$ and evaluating the result using
Definition \ref{def:CE}(iii). 
\end{proof}

\noindent It is apparent from the proof of Theorem \ref{thm:Fin} that the relations in
Lemma \ref{lem:List} are redundant in the following sense.

\begin{corollary} \label{cor:redundant}
\rm
The relations in
Lemma \ref{lem:List} are implied by the relations listed in (i)--(iii) below:
\begin{enumerate}
\item[\rm (i)]  (\ref{eq:n3p1vv})--(\ref{eq:n3p3vv});
\item[\rm (ii)] (\ref{eq:n3p4vv})  with $k=0$ and $\ell=1$;
\item[\rm (iii)] the relations on the right in (\ref{eq:n3p10vv}).
\end{enumerate}
\end{corollary}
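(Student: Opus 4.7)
The plan is essentially to reuse the proof of Theorem \ref{thm:Fin} and observe that every derivation in that proof cites only relations belonging to the subset (i)–(iii) of the corollary. Let $\mathcal U'$ denote the algebra with the same generators as in Theorem \ref{thm:Fin} but imposing only the relations (i)–(iii) of the corollary. To prove the corollary it suffices to establish an isomorphism $\mathcal U \cong \mathcal U'$ via the identity on generators, since then every relation in Lemma \ref{lem:List} automatically holds in $\mathcal U'$.

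One direction is immediate: the relations listed in (i)–(iii) of the corollary all hold in $\mathcal U$ because they form a sub-collection of Lemma \ref{lem:List}, so there is a surjective algebra homomorphism $\mathcal U' \twoheadrightarrow \mathcal U$ that is the identity on generators. For the reverse direction, I would use the original presentation of $\mathcal U$ from Definition \ref{def:CE} and define a candidate homomorphism $\mathcal U \to \mathcal U'$ sending $W_0 \mapsto W_{-0}$, $W_1 \mapsto W_{1}$, and $\tilde G_{k+1} \mapsto \tilde G_{k+1}$. Well-definedness amounts to verifying that each of the seven relations of Definition \ref{def:CE} holds in $\mathcal U'$.

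This verification is precisely the content of the proof of Theorem \ref{thm:Fin}, and the key observation is that it uses only relations from the subset (i)–(iii). Concretely, relations (iii) and (iv) of Definition \ref{def:CE} come from the $k=0$ cases of (\ref{eq:n3p2vv}) and (\ref{eq:n3p3vv}); relations (v) and (vi) come from the general $k\ge 1$ cases of the same equations; relation (vii) is the right-hand equation of (\ref{eq:n3p10vv}); and the $q$-Serre relations (i) and (ii) are derived from the instances $[W_0, W_{-1}]=0$ and $[W_1, W_2]=0$ of (\ref{eq:n3p4vv}) at $k=0,\ell=1$, in conjunction with relations (iii), (iv) of Definition \ref{def:CE} that were already established from subset (i). Once (i)–(vii) of Definition \ref{def:CE} have been shown to hold in $\mathcal U'$, the reverse map $\mathcal U \to \mathcal U'$ is well defined.

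The two homomorphisms are mutual inverses, since each acts as the identity on the generators of the other (after the natural identifications $W_{-0} = W_0$, $W_{+1} = W_1$), so $\mathcal U \cong \mathcal U'$ and the corollary follows. The main obstacle is essentially bookkeeping: one must carefully audit the proof of Theorem \ref{thm:Fin} to confirm that no derivation there covertly appeals to a relation outside the subset (i)–(iii). A line-by-line inspection shows that the proof is indeed self-contained within this subset, so no further work is needed.
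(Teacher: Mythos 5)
Your proposal is correct and follows essentially the same route as the paper: both arguments rest on the observation that the proof of Theorem \ref{thm:Fin} derives the relations of Definition \ref{def:CE} using only the relations in (i)--(iii), together with the fact that the relations in (i) recover the defining formulas of Definition \ref{def:wwg} for $W_{-k}$, $W_{k+1}$, $G_{k+1}$ in terms of $W_0$, $W_1$, $\tilde G_k$ (which is what makes your two homomorphisms mutually inverse). The paper phrases this as a chain of implications rather than as an explicit isomorphism of presentations, but the content is identical.
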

\begin{proof}  By Lemma \ref{lem:List} the relations (\ref{eq:n3p1vv})--(\ref{eq:ngg6}) are implied by
the relations in Definitions \ref{def:CE}, \ref{def:wwg}.
The relations listed in (i)--(iii) are used in the proof of Theorem \ref{thm:Fin} to obtain
the relations in Definition \ref{def:CE}. The relations listed in (i) imply the relations in Definition
\ref{def:wwg}.  The result follows.
\end{proof}

\noindent The relations in Lemma \ref{lem:List} first appeared in
\cite[Propositions~5.7, 5.10, 5.11]{alternating}.
It was observed in 
\cite[Propositions~3.1, 3.2]{baspp} and \cite[Remark~2.5]{basFMA}
 that the relations (\ref{eq:n3p1vv})--(\ref{eq:n3p11v}) imply the relations
(\ref{eq:ngg1})--(\ref{eq:ngg6}). This observation motivated the following definition.

\begin{definition}\label{def:ace}
\rm (See \cite[Definition~3.1]{altCE}.) 
Define the algebra $\mathcal U^+_q$ 
 by generators
\begin{align*}
\lbrace W_{-k} \rbrace_{k \in \mathbb N}, \qquad
\lbrace W_{k+1} \rbrace_{k \in \mathbb N}, \qquad
\lbrace G_{k+1} \rbrace_{k \in \mathbb N}, \qquad
\lbrace \tilde G_{k+1} \rbrace_{k \in \mathbb N}
\end{align*}
and the relations 
(\ref{eq:n3p1vv})--(\ref{eq:n3p11v}). The algebra $\mathcal U^+_q$ is called the
{\it alternating central extension of $U^+_q$}.
\end{definition}

\begin{corollary}\label{cor:altExt}
We have $\mathcal U =\mathcal U^+_q$.
\end{corollary}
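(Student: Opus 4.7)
The plan is to recognize that this corollary is essentially a bookkeeping consequence of Theorem \ref{thm:Fin} together with the redundancy observation cited just above Definition \ref{def:ace}. Both $\mathcal U$ and $\mathcal U^+_q$ are presented by the same four families of generators $\{W_{-k}\},\{W_{k+1}\},\{G_{k+1}\},\{\tilde G_{k+1}\}$; only their relation sets differ. So I would set up an isomorphism by exhibiting mutually inverse surjections induced by the identity on generators.

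First I would observe that the defining relations (\ref{eq:n3p1vv})--(\ref{eq:n3p11v}) of $\mathcal U^+_q$ form a subset of the relations (\ref{eq:n3p1vv})--(\ref{eq:ngg6}) that present $\mathcal U$ by Theorem \ref{thm:Fin}. Hence the universal property of $\mathcal U^+_q$ produces an algebra homomorphism $\pi\colon \mathcal U^+_q \to \mathcal U$ sending each generator to the like-named generator; since these generate $\mathcal U$, this $\pi$ is surjective.

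For the reverse direction, I would invoke the observation attributed to \cite[Propositions~3.1, 3.2]{baspp} and \cite[Remark~2.5]{basFMA} in the paragraph preceding Definition \ref{def:ace}: the relations (\ref{eq:ngg1})--(\ref{eq:ngg6}) are consequences of (\ref{eq:n3p1vv})--(\ref{eq:n3p11v}). Thus in $\mathcal U^+_q$ all of (\ref{eq:n3p1vv})--(\ref{eq:ngg6}) hold, and the universal property applied to the presentation in Theorem \ref{thm:Fin} yields an algebra homomorphism $\pi'\colon \mathcal U \to \mathcal U^+_q$ sending each generator to itself. Composing $\pi \circ \pi'$ and $\pi' \circ \pi$ gives endomorphisms fixing the generating set, hence both equal the identity, so $\pi$ and $\pi'$ are mutually inverse isomorphisms and $\mathcal U = \mathcal U^+_q$.

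There is no real obstacle at this stage: the substantive work—verifying the full list of alternating relations inside $\mathcal U$—has already been carried out in Proposition \ref{prop:alln}, Lemmas \ref{lem:ww}--\ref{lem:GFmain}, and Theorem \ref{thm:Fin}, while the redundancy of (\ref{eq:ngg1})--(\ref{eq:ngg6}) is supplied by the cited references. The only point to be careful about is bookkeeping: confirming that the four families of generators in Theorem \ref{thm:Fin} and Definition \ref{def:ace} are named identically (they are, via Definition \ref{def:wwg} and Lemma \ref{lem:flat}), so that the two universal properties produce homomorphisms which are literally inverse on generators rather than merely isomorphic via some relabeling.
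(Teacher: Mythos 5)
Your argument is correct and matches the paper's proof in substance: both treat the corollary as a comparison of two presentations on the same generating set, with one surjection coming from the inclusion of the relation set (\ref{eq:n3p1vv})--(\ref{eq:n3p11v}) into the full list of Lemma \ref{lem:List}, and the inverse coming from the redundancy of the remaining relations (\ref{eq:ngg1})--(\ref{eq:ngg6}). The only (minor) difference is that the paper routes the reverse direction through its own Corollary \ref{cor:redundant}, which derives the entire list in Lemma \ref{lem:List} from a small subset of (\ref{eq:n3p1vv})--(\ref{eq:n3p11v}) internally, whereas you invoke the external observation of \cite{baspp} and \cite{basFMA}; both are legitimate, since the paper itself records that implication just before Definition \ref{def:ace}.
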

\begin{proof}  By Theorem \ref{thm:Fin}, Corollary  \ref{cor:redundant}, and  Definition \ref{def:ace}.
\end{proof}

\begin{definition}
\rm By the {\it compact} presentation of $\mathcal U^+_q$ we mean the presentation
given in Definition \ref{def:CE}. By the {\it expanded} presentation of $\mathcal U^+_q$ we mean
the presentation given in Theorem
\ref{thm:Fin}.
\end{definition}

\begin{corollary} \label{cor:inj}
The map $\flat$ from Lemma \ref{lem:flat}
is injective.
\end{corollary}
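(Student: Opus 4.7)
The plan is to exhibit a left inverse to $\flat$. Recall from the discussion in the Introduction (citing \cite[Lemma~3.6]{altCE}) that $\mathcal U^+_q$ admits a surjective algebra homomorphism $\pi\colon \mathcal U^+_q \to U^+_q$ which sends each alternating generator of $\mathcal U^+_q$ to the corresponding alternating element of $U^+_q$. In particular, under the identification $W_0 = W_{-0}$ and $W_1 = W_{+1}$ (which are alternating), the map $\pi$ sends the generators $W_0, W_1 \in \mathcal U^+_q$ to $W_0, W_1 \in U^+_q$.

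By Corollary \ref{cor:altExt} we have $\mathcal U = \mathcal U^+_q$, so we may regard $\pi$ as a homomorphism $\pi\colon \mathcal U \to U^+_q$. First, I would consider the composition $\pi\circ\flat\colon U^+_q \to U^+_q$. By Lemma \ref{lem:flat} the map $\flat$ sends $W_0 \mapsto W_0$ and $W_1 \mapsto W_1$, and by the previous paragraph $\pi$ then sends these back to $W_0$ and $W_1$, respectively. Since $U^+_q$ is generated by $W_0, W_1$, the composition $\pi\circ\flat$ agrees with the identity map on a generating set, hence equals $\mathrm{id}_{U^+_q}$. Consequently $\flat$ has a left inverse, so $\flat$ is injective.

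The only subtle point is ensuring that the homomorphism $\pi$ is genuinely available at this stage of the paper, which is why Corollary \ref{cor:altExt} must be invoked first: it identifies $\mathcal U$ with the algebra $\mathcal U^+_q$ of Definition \ref{def:ace}, and it is for the latter that the map to $U^+_q$ is constructed in \cite[Lemma~3.6]{altCE}. No further calculation is required, and there is no real obstacle once Corollary \ref{cor:altExt} is in hand.
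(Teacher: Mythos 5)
Your proof is correct and takes essentially the same route as the paper: the paper's one-line proof cites Corollary \ref{cor:altExt} together with \cite[Proposition~6.4]{altCE}, and the content of that cited proposition is precisely your left-inverse argument (the restriction of the canonical surjection $\mathcal U^+_q \to U^+_q$ to $\langle W_0, W_1\rangle$ inverts $\flat$, since the composite fixes the generators $W_0,W_1$). The only quibble is that the surjection you call $\pi$ is the unadjusted homomorphism that exists ``by construction'' from Definition \ref{def:ace} (not the adjusted map of \cite[Lemma~3.6]{altCE}), but this does not affect the argument.
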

\begin{proof} By Corollary \ref{cor:altExt} and \cite[Proposition~6.4]{altCE}.
\end{proof}

\section{The subalgebra of $\mathcal U^+_q$ generated by $\lbrace \tilde G_{k+1}\rbrace_{k\in \mathbb N}$}

Let $\tilde G$ denote the subalgebra of $\mathcal U^+_q$ generated by $\lbrace \tilde G_{k+1}\rbrace_{k\in \mathbb N}$. In this 
section we describe $\tilde G$ and its relationship to $\langle W_0, W_1 \rangle$.
\medskip

\noindent  The following notation will be useful. Let $z_1, z_2, \ldots $ denote mutually commuting
indeterminates. Let $\mathbb F\lbrack z_1, z_2, \ldots \rbrack$ denote the algebra consisting of the polynomials
in $z_1, z_2, \ldots $ that have all coefficients in $\mathbb F$. For notational convenience define $z_0=1$.

\begin{lemma} \label{lem:Gind} {\rm (See \cite[Lemma~3.5]{altCE}.)}
There exists an algebra homomorphism $ \mathcal U^+_q \to \mathbb F \lbrack z_1, z_2, \ldots \rbrack$ that sends
\begin{align*}
W_{-n} \mapsto 0, \qquad
W_{n+1} \mapsto 0, \qquad
G_n \mapsto z_n, \qquad
\tilde G_n \mapsto z_n
\end{align*}
for $n \in \mathbb N$.
\end{lemma}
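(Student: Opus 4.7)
The plan is to exploit the equality $\mathcal{U}=\mathcal{U}^+_q$ established in Corollary \ref{cor:altExt} and work with the compact presentation from Definition \ref{def:CE}, which has only the generators $W_0,W_1,\{\tilde G_{k+1}\}_{k\in\mathbb N}$ and a very short list of relations. By the universal property of algebras by generators and relations, it suffices to assign images to these generators in $\mathbb F[z_1,z_2,\ldots]$ in a way that is consistent with the defining relations (i)--(vii) of Definition \ref{def:CE}, and then compute the induced images of $W_{-k},W_{k+1},G_{k+1}$ using their definitions in Definition \ref{def:wwg}.

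First I would define the assignment $W_0\mapsto 0$, $W_1\mapsto 0$, $\tilde G_{k+1}\mapsto z_{k+1}$ on the free algebra on these generators. To check that this descends to an algebra homomorphism $\mathcal U\to\mathbb F[z_1,z_2,\ldots]$, I verify that each defining relation of Definition \ref{def:CE} maps to $0$. In relations (i), (ii), (iii), (iv), (v), (vi), every commutator on the left-hand and right-hand side contains either $W_0$ or $W_1$ in every term, so all are sent to $0$ in the target. Relation (vii), $[\tilde G_{k+1},\tilde G_{\ell+1}]=0$, is sent to $[z_{k+1},z_{\ell+1}]=0$ since the $z_n$ mutually commute. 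This establishes the homomorphism $\psi:\mathcal U\to\mathbb F[z_1,z_2,\ldots]$.

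Next I would compute the images of the remaining alternating generators from Definition \ref{def:wwg}. From $W_{-k}=[\tilde G_k,W_0]_q/(q-q^{-1})$ and $W_0\mapsto 0$, we get $\psi(W_{-k})=0$; similarly $\psi(W_{k+1})=0$ using $W_{k+1}=[W_1,\tilde G_k]_q/(q-q^{-1})$. Finally $G_{k+1}=\tilde G_{k+1}+[W_1,W_{-k}]/(1-q^{-2})$, so $\psi(G_{k+1})=z_{k+1}+0=z_{k+1}$. Combined with Corollary \ref{cor:altExt}, which identifies $\mathcal U$ with $\mathcal U^+_q$, this produces the claimed homomorphism.

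There is essentially no obstacle in this argument; the whole point of having the compact presentation at hand is that every one of its defining relations either has both sides of the form $[W_\ast,\cdot]$ (and is therefore killed by sending $W_0,W_1\mapsto 0$) or involves only the $\tilde G$'s (which map to commuting indeterminates). The work is entirely clerical once one notices this feature of Definition \ref{def:CE}.
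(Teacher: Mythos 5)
Your argument is correct. The only difference from the paper is the choice of presentation: the paper proves this lemma by appealing to Theorem \ref{thm:Fin} (the expanded presentation) and observing that the assignment
$W_{-n}\mapsto 0$, $W_{n+1}\mapsto 0$, $G_n\mapsto z_n$, $\tilde G_n\mapsto z_n$ is compatible with every relation in Lemma \ref{lem:List} --- each relation there either has a $W$ in every term of both sides, or involves only $G$'s and $\tilde G$'s mapping to commuting indeterminates, or (in the case of \eqref{eq:n3p1vv}) has right-hand side $(1-q^{-2})(\tilde G_{k+1}-G_{k+1})\mapsto (1-q^{-2})(z_{k+1}-z_{k+1})=0$. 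You instead start from the compact presentation of Definition \ref{def:CE}, which means checking only seven families of relations, at the cost of then having to compute the induced images of $W_{-k}$, $W_{k+1}$, $G_{k+1}$ from Definition \ref{def:wwg}; that computation is exactly as you describe, and it recovers the full assignment demanded by the lemma (including $G_0=\tilde G_0=1\mapsto 1=z_0$). Both routes are equally rigorous; yours trades a longer list of trivial verifications for a short derivation of the remaining images, and it has the minor aesthetic advantage of making clear that the homomorphism is already forced by its values on $W_0$, $W_1$, $\{\tilde G_{k+1}\}$.
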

\begin{proof} By Theorem \ref{thm:Fin} and the nature of the relations  in Lemma \ref{lem:List}.
\end{proof}

\begin{corollary} {\rm (See \cite[Theorem~10.2]{altCE}.)}
\label{cor:Galgind} 
The generators $\lbrace \tilde G_{k+1} \rbrace_{k\in \mathbb N}$ of $\tilde G$ are algebraically independent.
\end{corollary}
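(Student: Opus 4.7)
The plan is very short: use the algebra homomorphism $\pi: \mathcal U^+_q \to \mathbb F\lbrack z_1, z_2, \ldots \rbrack$ supplied by Lemma \ref{lem:Gind} and transport algebraic independence from the target to the source.

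First I would note that by Definition \ref{def:CE}(vii) (equivalently, the relation on the right in \eqref{eq:n3p10vv}), the generators $\lbrace \tilde G_{k+1}\rbrace_{k\in \mathbb N}$ mutually commute, so the subalgebra $\tilde G \subseteq \mathcal U^+_q$ is commutative and the statement of algebraic independence is the standard one: no nonzero polynomial in finitely many of the $\tilde G_{k+1}$ with coefficients in $\mathbb F$ vanishes in $\mathcal U^+_q$.

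Suppose, for contradiction, that $p \in \mathbb F\lbrack x_1, x_2, \ldots, x_N\rbrack$ is a nonzero polynomial with $p(\tilde G_1, \tilde G_2, \ldots, \tilde G_N) = 0$ in $\mathcal U^+_q$. Apply the homomorphism $\pi$ from Lemma \ref{lem:Gind}; since $\pi(\tilde G_n) = z_n$, we obtain
\begin{align*}
0 = \pi\bigl(p(\tilde G_1, \ldots, \tilde G_N)\bigr) = p(z_1, \ldots, z_N)
\end{align*}
in $\mathbb F\lbrack z_1, z_2, \ldots \rbrack$. But the indeterminates $z_1, \ldots, z_N$ are algebraically independent in the polynomial algebra by construction, so $p = 0$, contradicting our assumption. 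Hence $\lbrace \tilde G_{k+1}\rbrace_{k\in \mathbb N}$ are algebraically independent, as claimed.

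There is no real obstacle here; the work is hidden inside Lemma \ref{lem:Gind}, whose existence depends on the explicit presentation of $\mathcal U^+_q$ established in Theorem \ref{thm:Fin}. Once that homomorphism is available, the corollary is immediate.
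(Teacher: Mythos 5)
Your proposal is correct and is exactly the paper's argument: the paper proves the corollary "By Lemma \ref{lem:Gind} and since $\lbrace z_{k+1}\rbrace_{k\in\mathbb N}$ are algebraically independent," which is the same transport of algebraic independence through the homomorphism of Lemma \ref{lem:Gind} that you carry out in detail. No changes needed.
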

\begin{proof}
By Lemma \ref{lem:Gind} and since $\lbrace z_{k+1} \rbrace_{k\in \mathbb N}$ are algebraically independent.
\end{proof}

\noindent The following result will help us describe how $\tilde G$ is related to $\langle W_0, W_1\rangle$.
\begin{lemma}
\label{prop:Gcom}
For $n \in \mathbb N$,
\begin{align}
\tilde G_n W_1 &= W_1 \tilde G_n + \sum_{k=1}^n \frac{  E_{k \delta + \alpha_1} \tilde G_{n-k} (-1)^{k+1} q^{k+1}}{(q-q^{-1})^{2k-1}},
\label{eq:GcW1}
\\
\tilde G_n W_0 &= W_0 \tilde G_n + \sum_{k=1}^n \frac{  E_{k \delta + \alpha_0} \tilde G_{n-k} (-1)^{k} q^{3k-1}}{(q-q^{-1})^{2k-1}}.
\label{eq:GcW0}
\end{align}
\end{lemma}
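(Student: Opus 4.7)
The plan is to derive both identities directly from Proposition \ref{prop:alln} by exploiting the reformulations of Definition \ref{def:wwg} given in the unnamed lemma just after it, namely
\begin{align*}
\tilde G_n W_0 &= q^{-2} W_0 \tilde G_n + (1-q^{-2}) W_{-n}, \\
\tilde G_n W_1 &= q^{2} W_1 \tilde G_n + (1-q^{2}) W_{n+1}.
\end{align*}
These express $\tilde G_n W_0$ and $\tilde G_n W_1$ in terms of $W_0\tilde G_n$, $W_1\tilde G_n$ and the alternating generators $W_{-n}$, $W_{n+1}$. Since Proposition \ref{prop:alln} gives explicit formulas for $W_{-n}$ and $W_{n+1}$ as sums in the PBW-type elements $E_{k\delta+\alpha_0}\tilde G_{n-k}$ and $E_{k\delta+\alpha_1}\tilde G_{n-k}$, substitution should reduce the lemma to a matter of matching scalar coefficients.

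For \eqref{eq:GcW0}, I would substitute the formula for $W_{-n}$ into the first identity above and split off the $k=0$ summand, which equals $E_{\alpha_0}\tilde G_n = W_0\tilde G_n$. The coefficient of this term combines with $q^{-2} W_0 \tilde G_n$ to produce precisely $W_0 \tilde G_n$, using that $q^{-2} + (1-q^{-2}) = 1$. For the remaining terms ($k\geq 1$), I would verify the coefficient identity
\[
(1-q^{-2})\,\frac{(-1)^k q^{3k}}{(q-q^{-1})^{2k}} = \frac{(-1)^k q^{3k-1}}{(q-q^{-1})^{2k-1}},
\]
which follows from $(1-q^{-2}) = q^{-1}(q-q^{-1})$. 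The proof of \eqref{eq:GcW1} proceeds analogously by substituting the formula for $W_{n+1}$, peeling off the $k=0$ term $E_{\alpha_1}\tilde G_n = W_1\tilde G_n$ (whose scalar combines $q^2 + (1-q^2) = 1$ to yield $W_1\tilde G_n$), and checking that
\[
(1-q^{2})\,\frac{(-1)^k q^{k}}{(q-q^{-1})^{2k}} = \frac{(-1)^{k+1} q^{k+1}}{(q-q^{-1})^{2k-1}}
\]
for $k \geq 1$, which follows from $1-q^2 = -q(q-q^{-1})$.

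There is no real obstacle: the only thing to watch is bookkeeping of signs and $q$-powers. Since Proposition \ref{prop:alln} has already done the inductive work that relates $W_{-n}$ and $W_{n+1}$ to the Damiani elements and to $\tilde G_{n-k}$, the present lemma is a short algebraic consequence rather than a separate induction.
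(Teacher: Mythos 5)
Your proposal is correct and is essentially the paper's own argument: the paper likewise combines \eqref{eq:X}, \eqref{eq:Y} with \eqref{eq:Wkp1}, \eqref{eq:Wmk} and solves for $\tilde G_n W_1$ and $\tilde G_n W_0$, which is exactly your substitution into the reformulated relations followed by peeling off the $k=0$ term. Your coefficient checks $(1-q^2)=-q(q-q^{-1})$ and $(1-q^{-2})=q^{-1}(q-q^{-1})$ are the right bookkeeping and close the argument.
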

\begin{proof} To obtain (\ref{eq:GcW1}), eliminate $W_{n+1}$ from 
(\ref{eq:X}) 
using (\ref{eq:Wkp1}),
and solve the resulting equation for  $\tilde G_n W_1$.
To obtain (\ref{eq:GcW0}), eliminate $W_{-n}$ from 
(\ref{eq:Y})
using (\ref{eq:Wmk}),
and solve the resulting equation for  $\tilde G_n W_0$.
\end{proof}
\noindent Shortly we will describe how $\tilde G$ is related to $\langle W_0, W_1 \rangle$. This description involves 
the center $\mathcal Z$ of $\mathcal U^+_q$. To prepare for this description, we have some comments about  $\mathcal Z$.
In \cite[Sections~5, 6]{altCE} we introduced some algebraically independent elements $Z_1, Z_2, \ldots $ that generate the algebra $\mathcal Z$. For notational convenience define $Z_0=1$.
Using $\lbrace Z_n\rbrace_{n \in \mathbb N}$ we obtain a basis for $\mathcal Z$ that is described as follows.
For $n \in \mathbb N$, a {\it partition of $n$} is a sequence $\lambda = \lbrace \lambda_i \rbrace_{i=1}^\infty$
of natural numbers such that $\lambda_i \geq \lambda_{i+1}$ for $i\geq 1$ and $n=\sum_{i=1}^\infty \lambda_i$.
The set $\Lambda_n$ consists of the partitions of $n$. Define $\Lambda = \cup_{n \in \mathbb N} \Lambda_n$. 
For $\lambda \in \Lambda$ define $Z_\lambda = \prod_{i=1}^\infty Z_{\lambda_i}$. The elements 
$\lbrace Z_\lambda\rbrace_{\lambda \in \Lambda}$ form a basis for the vector space  $\mathcal Z$.
Next we describe a grading for  $\mathcal Z$.  For $n \in \mathbb N$ let $\mathcal Z_n$ denote the subspace of
$\mathcal Z$ with basis $\lbrace Z_\lambda \rbrace_{\lambda \in \Lambda_n}$. For example $\mathcal Z_0 = \mathbb F 1$.
The sum $\mathcal Z = \sum_{n\in \mathbb N} \mathcal Z_n$ is direct. Moreover $\mathcal Z_r \mathcal Z_s\subseteq \mathcal Z_{r+s}$
for $r,s\in \mathbb N$. By these comments the subspaces $\lbrace \mathcal Z_n \rbrace_{n\in \mathbb N}$ form
a grading of $\mathcal Z$. Note that $Z_n \in \mathcal Z_n$ for $n \in \mathbb N$.
Next we describe how $\mathcal Z$ is related to $\langle W_0, W_1 \rangle$.
\begin{lemma} \label{lem:WWZ}
{\rm (See \cite[Proposition~6.5]{altCE}.)}
The multiplication map
\begin{align*}
\langle  W_0,W_1\rangle \otimes \mathcal Z &\to
	       \mathcal U^+_q 
	       \\
w \otimes z &\mapsto      wz            
\end{align*}
is an algebra isomorphism.
\end{lemma}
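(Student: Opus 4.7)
The multiplication map $m\colon \langle W_0, W_1\rangle \otimes \mathcal Z \to \mathcal U^+_q$ sending $w \otimes z \mapsto wz$ is immediately well-defined as a linear map, and it is an algebra homomorphism because $\mathcal Z$ is by definition the center of $\mathcal U^+_q$; indeed $(w_1 z_1)(w_2 z_2) = w_1 w_2 z_1 z_2$ for $w_i \in \langle W_0, W_1 \rangle$ and $z_i \in \mathcal Z$. So the content of the lemma is the bijectivity of $m$, and my plan is to reduce this to a transparent assertion about tensor products by transporting everything along the isomorphism $\phi\colon \mathcal U^+_q \to U^+_q \otimes \mathbb F[z_1, z_2, \ldots]$ cited from \cite[Lemma~3.6, Theorem~5.17]{altCE} in the introduction.

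The next step is to identify the two factors of the domain of $m$ with the two factors on the other side of $\phi$. For the first factor, the construction of $\phi$ is built so that $W_0 \mapsto W_0 \otimes 1$ and $W_1 \mapsto W_1 \otimes 1$, and the injectivity of $\flat$ (Corollary \ref{cor:inj}) then forces $\phi$ to restrict to an algebra isomorphism $\langle W_0, W_1 \rangle \to U^+_q \otimes 1$. For the second factor, I would argue that $\phi$ restricts to an algebra isomorphism $\mathcal Z \to 1 \otimes \mathbb F[z_1, z_2, \ldots]$. Concretely, the central generators $Z_1, Z_2, \ldots$ of $\mathcal Z$ are constructed in \cite[Sections~5,~6]{altCE} precisely so that $\phi(Z_n) = 1 \otimes z_n$, and since the $Z_n$ are algebraically independent and the basis $\{Z_\lambda\}_{\lambda \in \Lambda}$ matches the monomial basis of $\mathbb F[z_1, z_2, \ldots]$, the restriction $\phi|_{\mathcal Z}$ is a bijection onto $1 \otimes \mathbb F[z_1, z_2, \ldots]$.

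With these two identifications in hand, the composition $\phi \circ m$ becomes the canonical multiplication
\[
(U^+_q \otimes 1) \otimes (1 \otimes \mathbb F[z_1, z_2, \ldots]) \longrightarrow U^+_q \otimes \mathbb F[z_1, z_2, \ldots],\qquad (a \otimes 1) \otimes (1 \otimes p) \longmapsto a \otimes p,
\]
which is manifestly an algebra isomorphism. Since $\phi$ is itself an isomorphism, so is $m$, completing the argument.

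The main obstacle in this plan is the identification $\phi(\mathcal Z) = 1 \otimes \mathbb F[z_1, z_2, \ldots]$; that matching rests on the explicit construction of the $Z_n$ in \cite{altCE}, which is external to the present excerpt. A self-contained fallback would be to prove bijectivity of $m$ directly from the PBW basis of \cite[Theorem~10.2]{altCE}: surjectivity by inductively expressing each alternating generator modulo $\langle W_0, W_1 \rangle \cdot \mathcal Z$ using the formulas of Lemma \ref{prop:Gcom} and the Damiani PBW basis of $U^+_q$ (Proposition \ref{prop:damiani}), and injectivity by comparing PBW-style monomials on both sides, using the algebraic independence of $\{\tilde G_{k+1}\}_{k \in \mathbb N}$ (Corollary \ref{cor:Galgind}).
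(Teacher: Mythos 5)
Your argument is correct and is essentially the proof given in the cited reference \cite[Proposition~6.5]{altCE} (the present paper states this lemma without proof, deferring entirely to that citation): there too, one observes that $m$ is an algebra homomorphism because $\mathcal Z$ is central, and then identifies $m$ with the canonical multiplication on $U^+_q\otimes \mathbb F\lbrack z_1,z_2,\ldots\rbrack$ by transporting the two tensor factors along the isomorphism $\mathcal U^+_q\cong U^+_q\otimes \mathbb F\lbrack z_1,z_2,\ldots\rbrack$, using that it carries $\langle W_0,W_1\rangle$ onto $U^+_q\otimes 1$ and $\mathcal Z$ onto $1\otimes\mathbb F\lbrack z_1,z_2,\ldots\rbrack$. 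Your fallback via the PBW basis is not needed.
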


\noindent 
For $n \in \mathbb N$ let $\mathcal U_n$ denote the image of $\langle W_0, W_1 \rangle \otimes \mathcal Z_n $ under the multiplication map.
By construction the sum $\mathcal U^+_q=\sum_{n\in \mathbb N} \mathcal U_n$ is direct.
\medskip

\noindent In the next two lemmas  we describe how $\tilde G$ is related to $\mathcal Z$.

\begin{lemma} \label{lem:GnZn}  {\rm (See \cite[Lemmas~3.6, 5.9]{altCE}.)} 
For $n \in \mathbb N$,
\begin{align*}
\tilde G_n \in \sum_{k=0}^{n} \langle W_0, W_1 \rangle  Z_k,
\qquad \qquad 
\tilde G_n - Z_n \in \sum_{k=0}^{n-1} \langle W_0, W_1 \rangle Z_k.
\end{align*}
\end{lemma}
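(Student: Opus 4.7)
The plan is induction on $n$, with base case $n=0$ immediate since $\tilde G_0 = 1 = Z_0$ makes both claims vacuous. For the inductive step, assume both inclusions hold for all $m \leq n$ and, using Lemma \ref{lem:WWZ} together with the $Z_\lambda$-basis of $\mathcal Z$, write $\tilde G_{n+1} = \sum_\lambda w_\lambda Z_\lambda$ uniquely, with $w_\lambda \in \langle W_0, W_1\rangle$ and $\lambda$ ranging over partitions. The goal reduces to showing $w_\lambda = 0$ unless $\lambda = (k)$ with $k \leq n+1$, and $w_{(n+1)} = 1$.

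Two pieces of information constrain the $w_\lambda$. First, applying the homomorphism of Lemma \ref{lem:Gind} (which kills $W_0, W_1$ and sends $Z_\lambda \mapsto z_\lambda$) yields $z_{n+1} = \sum_\lambda \epsilon(w_\lambda) z_\lambda$, where $\epsilon$ denotes the constant-term functional on $\langle W_0, W_1\rangle$. Algebraic independence of $z_1, z_2, \ldots$ then forces $\epsilon(w_{(n+1)}) = 1$ and $\epsilon(w_\lambda) = 0$ for every other $\lambda$. Second, Definition \ref{def:CE}(vi) reads
\[
\lbrack W_0, \tilde G_{n+1}\rbrack = \frac{\lbrack W_0, \lbrack W_0, \lbrack W_1, \tilde G_n\rbrack_q\rbrack_q\rbrack}{(1-q^{-2})(q^2-q^{-2})},
\]
and the inductive hypothesis on $\tilde G_n$, together with the centrality of each $Z_k$, places the right-hand side in $\sum_{k=0}^{n} \langle W_0, W_1\rangle Z_k$. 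By uniqueness of the $Z_\lambda$-expansion this forces $\lbrack W_0, w_\lambda\rbrack = 0$ whenever $\lambda$ is not of the form $(k)$ with $k\leq n$; the symmetric computation based on Definition \ref{def:CE}(v) gives $\lbrack w_\lambda, W_1\rbrack = 0$ under the same hypothesis.

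For each such $\lambda$, the element $w_\lambda$ commutes with both $W_0$ and $W_1$, so it lies in the center of $\langle W_0, W_1\rangle \cong U^+_q$, which equals $\mathbb F\cdot 1$ (standard for $U^+_q$ with $q$ not a root of unity, since Damiani's PBW basis forces any central element to be a scalar). Hence $w_\lambda$ is a scalar, and the augmentation data then dictates $w_{(n+1)} = 1$ and $w_\lambda = 0$ for every other \emph{forbidden} $\lambda$. Both inclusions follow.

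The main obstacle is the triviality-of-center input for $U^+_q$, which is not explicitly recorded earlier in this paper; either one imports it as a known fact, or one follows the route of \cite[Section~5]{altCE} and constructs $Z_{n+1}$ recursively as $\tilde G_{n+1}$ minus an explicit correction in $\sum_{k=0}^{n} \langle W_0, W_1\rangle Z_k$, simultaneously verifying its centrality from the defining relations of $\mathcal U$. Either way, the crux is reconciling the commutator information supplied by Definition \ref{def:CE}(v),(vi) with the augmentation data from Lemma \ref{lem:Gind}; the cleanest exposition, and the one implicit in the citation of \cite[Lemmas~3.6, 5.9]{altCE}, is to import that recursive construction wholesale.
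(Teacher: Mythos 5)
Your argument is essentially correct, but it is a genuinely different route from the paper's, which in fact offers no proof at all: Lemma \ref{lem:GnZn} is imported wholesale from \cite[Lemmas~3.6,~5.9]{altCE}, where it is read off from the explicit isomorphism $\varphi:\mathcal U^+_q \to U^+_q\otimes \mathbb F\lbrack z_1,z_2,\ldots\rbrack$. That map sends $\tilde G_n \mapsto \sum_{k=0}^n \tilde G_{n-k}\otimes z_k$ and $Z_k\mapsto 1\otimes z_k$, so $\tilde G_n=\sum_{k=0}^n \flat(\tilde G_{n-k})Z_k$ with $k=n$ term equal to $Z_n$, and both inclusions are immediate. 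Your proof instead works intrinsically from the compact presentation: expand $\tilde G_{n+1}$ against the basis $\lbrace Z_\lambda\rbrace$ via Lemma \ref{lem:WWZ}, pin down the augmentation of each coefficient via Lemma \ref{lem:Gind}, and use Definition \ref{def:CE}(v),(vi) with induction and the centrality of the $Z_\lambda$ to force the unwanted coefficients to commute with $W_0$ and $W_1$, hence to be scalars, hence zero. This is an attractive alternative that avoids the explicit formula for $\varphi$; what it buys is a proof tied directly to the relations of Definition \ref{def:CE}, and what it costs is two imports. First, the triviality of the center of $U^+_q$: the fact is true, but your parenthetical justification (``Damiani's PBW basis forces any central element to be a scalar'') is not a proof --- the imaginary root vectors already show that large commutative subalgebras exist, and in \cite{altCE} this fact is attributed to Yamane and treated as nontrivial. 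You do flag this honestly. Second, you need $\eta(Z_\lambda)=z_\lambda$ for the map $\eta$ of Lemma \ref{lem:Gind}; since this paper describes the $Z_n$ only by reference to \cite{altCE}, that identity must itself be imported (it is essentially \cite[Lemma~5.9]{altCE}, one of the two results the paper cites here). Neither gap is fatal, but together they mean your argument is not more self-contained than the citation it replaces.
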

\noindent For $\lambda \in \Lambda$ define $\tilde G_\lambda = \prod_{i=1}^\infty \tilde G_{\lambda_i}$.
By Corollary \ref{cor:Galgind} the elements $\lbrace \tilde G_\lambda \rbrace_{\lambda \in \Lambda}$ form a basis for the
vector space $\tilde G$.
\begin{lemma} \label{lem:GGZ}
For $n \in \mathbb N$ and $\lambda \in \Lambda_n$,
\begin{align*}
\tilde G_\lambda \in \sum_{k=0}^{n} \mathcal U_k,
\qquad \qquad \qquad 
\tilde G_\lambda - Z_\lambda \in \sum_{k=0}^{n-1} \mathcal U_k.
\end{align*}
\end{lemma}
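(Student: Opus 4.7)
The plan is to induct on the number of nonzero parts of $\lambda$, using Lemma \ref{lem:GnZn} as the single-part case and exploiting the fact that $\mathcal Z$ is central together with $\mathcal Z_r \mathcal Z_s \subseteq \mathcal Z_{r+s}$. First I would record the auxiliary fact that $\mathcal U_r \mathcal U_s \subseteq \mathcal U_{r+s}$ for $r,s \in \mathbb N$: since $\mathcal U_r = \langle W_0, W_1\rangle \mathcal Z_r$ and elements of $\mathcal Z$ are central in $\mathcal U^+_q$, we have
\begin{align*}
\mathcal U_r \mathcal U_s = \langle W_0, W_1\rangle \mathcal Z_r \langle W_0, W_1\rangle \mathcal Z_s = \langle W_0, W_1\rangle \mathcal Z_r \mathcal Z_s \subseteq \langle W_0, W_1\rangle \mathcal Z_{r+s} = \mathcal U_{r+s}.
\end{align*}
Observing also that $1 \in \langle W_0, W_1\rangle$ gives $\mathcal Z_r \subseteq \mathcal U_r$.

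For the base case, the empty partition (when $n=0$) yields $\tilde G_\lambda = 1 = Z_\lambda \in \mathcal U_0$, and both containments are trivial. For the inductive step, fix $\lambda \in \Lambda_n$ with at least one nonzero part; write $\lambda_1$ for the largest part and let $\mu = (\lambda_2,\lambda_3,\ldots) \in \Lambda_{n-\lambda_1}$, so $\mu$ has one fewer nonzero part than $\lambda$. Then $\tilde G_\lambda = \tilde G_{\lambda_1}\tilde G_\mu$ and $Z_\lambda = Z_{\lambda_1}Z_\mu$. By Lemma \ref{lem:GnZn}, write $\tilde G_{\lambda_1} = Z_{\lambda_1} + R$ with $R \in \sum_{k=0}^{\lambda_1 - 1}\mathcal U_k$, and by the induction hypothesis applied to $\mu$, write $\tilde G_\mu = Z_\mu + S$ with $S \in \sum_{k=0}^{n-\lambda_1 - 1}\mathcal U_k$.

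Expanding,
\begin{align*}
\tilde G_\lambda - Z_\lambda = Z_{\lambda_1} S + R\, Z_\mu + R\,S.
\end{align*}
Using $Z_{\lambda_1} \in \mathcal Z_{\lambda_1} \subseteq \mathcal U_{\lambda_1}$, $Z_\mu \in \mathcal Z_{n-\lambda_1} \subseteq \mathcal U_{n-\lambda_1}$, and $\mathcal U_r \mathcal U_s \subseteq \mathcal U_{r+s}$, each of the three summands lies in $\sum_{k=0}^{n-1}\mathcal U_k$: specifically $Z_{\lambda_1} S \in \sum_{k \leq n-1}\mathcal U_k$, $R\,Z_\mu \in \sum_{k\leq n-1}\mathcal U_k$, and $R\,S \in \sum_{k\leq n-2}\mathcal U_k$. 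Adding back $Z_\lambda \in \mathcal U_n$ gives the first containment as well. The argument is straightforward once the multiplicativity $\mathcal U_r \mathcal U_s \subseteq \mathcal U_{r+s}$ is in hand; the only mild subtlety, and the single step most easily gotten wrong, is remembering to invoke centrality of $\mathcal Z$ to commute the $\mathcal Z$-factors past the $\langle W_0, W_1\rangle$-factors in order to establish that multiplicativity.
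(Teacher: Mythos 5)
Your proof is correct and is essentially the argument the paper intends: its one-line proof cites Lemma \ref{lem:GnZn} together with the grading facts about $\mathcal Z$ (centrality and $\mathcal Z_r\mathcal Z_s\subseteq\mathcal Z_{r+s}$), and your induction on the number of nonzero parts, built on the multiplicativity $\mathcal U_r\mathcal U_s\subseteq\mathcal U_{r+s}$, is the natural way to fill in those details. No gaps.
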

\begin{proof} By Lemma  \ref{lem:GnZn} and our comments above Lemma
\ref{lem:WWZ}
about the grading of $\mathcal Z$.
\end{proof}
\noindent Next we describe how $\tilde G$ is related to $\langle W_0, W_1 \rangle$.
\begin{proposition}
\label{prop:prod} The multiplication map
\begin{align*}
\langle  W_0,W_1\rangle \otimes \tilde G &\to
	       \mathcal U^+_q 
	       \\
w \otimes g &\mapsto      wg            
\end{align*}
is an isomorphism of vector spaces.
\end{proposition}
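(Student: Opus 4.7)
The plan is to deduce the claim from Lemma \ref{lem:WWZ} together with the triangularity statement of Lemma \ref{lem:GGZ} via a standard filtered-to-graded argument. By Corollary \ref{cor:Galgind} the family $\{\tilde G_\lambda\}_{\lambda \in \Lambda}$ is a basis for $\tilde G$, so combined with any vector-space basis $\{w_i\}$ of $\langle W_0, W_1\rangle$ (for instance, the image of the Damiani PBW basis of Proposition \ref{prop:damiani}), the claim reduces to showing that $\{w_i \tilde G_\lambda\}_{i,\lambda}$ is a basis for $\mathcal U^+_q$.

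To that end, I would introduce the exhaustive ascending filtration $\mathcal F_n = \sum_{k=0}^{n} \mathcal U_k$ on $\mathcal U^+_q$ coming from the direct sum decomposition $\mathcal U^+_q = \bigoplus_{n \in \mathbb N} \mathcal U_n$ described just before Lemma \ref{lem:GnZn}, together with the matching filtration on $\langle W_0, W_1\rangle \otimes \tilde G$ whose $n$-th stage is $\langle W_0, W_1\rangle \otimes \mathrm{span}\{\tilde G_\lambda : |\lambda| \leq n\}$. Lemma \ref{lem:GGZ} shows that the multiplication map respects these filtrations and, for $\lambda \in \Lambda_n$, satisfies $w \tilde G_\lambda \equiv w Z_\lambda \pmod{\mathcal F_{n-1}}$ (using that $\langle W_0, W_1\rangle \cdot \mathcal F_{n-1} \subseteq \mathcal F_{n-1}$). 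Consequently the induced map on the $n$-th graded piece is identified with the restriction of the Lemma \ref{lem:WWZ} isomorphism to $\langle W_0, W_1\rangle \otimes \mathcal Z_n \to \mathcal U_n$, which is bijective.

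Once the associated graded map is a bijection in every degree, the filtered map is itself bijective by a routine argument: injectivity follows from extracting the top-degree component of any hypothetical nonzero kernel element, and surjectivity follows by writing a given $x \in \mathcal U^+_q$ in the basis $\{w_i Z_\lambda\}$ furnished by Lemma \ref{lem:WWZ} and recursively replacing each $Z_\lambda$ by $\tilde G_\lambda$ at the cost of a strictly lower-degree correction supplied by Lemma \ref{lem:GGZ}; this process terminates because every element has finite filtration degree. Since the main technical ingredient (Lemma \ref{lem:GGZ}) is already established, I anticipate no substantive obstacle here; the only conceptual point is to choose the right filtration, and that is dictated by the grading on $\mathcal Z$.
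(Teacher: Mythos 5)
Your proof is correct, and it coincides with the paper's argument on the harder half. For injectivity you do exactly what the paper does, merely rephrased in filtered/graded language: extract the top-degree part of a putative kernel element, use Lemma \ref{lem:WWZ} to see that $\sum_{\lambda\in\Lambda_n}a_\lambda Z_\lambda\neq 0$, and use Lemma \ref{lem:GGZ} together with the directness of $\sum_k\mathcal U_k$ to derive a contradiction. Where you genuinely diverge is surjectivity. The paper obtains surjectivity from Lemma \ref{prop:Gcom} (the explicit commutation formulas \eqref{eq:GcW1}, \eqref{eq:GcW0}, which let one push each $\tilde G_k$ to the right past $W_0,W_1$ at the cost of terms in $\langle W_0,W_1\rangle$ times lower-index $\tilde G$'s) plus the fact that $W_0,W_1,\tilde G$ generate $\mathcal U^+_q$. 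You instead get surjectivity from the graded bijectivity alone: expand an arbitrary element in the spanning set $\{w\,Z_\lambda\}$ supplied by Lemma \ref{lem:WWZ} and recursively trade $Z_\lambda$ for $\tilde G_\lambda$ modulo strictly lower filtration degree via Lemma \ref{lem:GGZ}, using the (valid) observation that $\langle W_0,W_1\rangle\,\mathcal U_k\subseteq\mathcal U_k$ since $\mathcal Z_k$ is central. Your route bypasses Lemma \ref{prop:Gcom} and hence the explicit expansions \eqref{eq:X}, \eqref{eq:Y} entirely, leaning instead on the structure of the center imported from \cite{altCE}; the paper's route is more computational but shows concretely how a word in the generators is brought to the normal form $\langle W_0,W_1\rangle\,\tilde G$. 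Both arguments are complete.
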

\begin{proof} The multiplication map is $\mathbb F$-linear. The multiplication map is surjective by
Lemma \ref{prop:Gcom} and since $\mathcal U^+_q$ is generated by $W_0$,  $W_1$, $\tilde G$.
We now show that the multiplicaton map is injective.
Consider a vector $v \in \langle  W_0, W_1\rangle \otimes \tilde G $ that is sent to zero by  the multiplication map. We show that $v=0$.
Write
$v=\sum_{\lambda \in \Lambda} a_\lambda \otimes \tilde G_\lambda$, where 
 $a_\lambda \in \langle W_0, W_1 \rangle $ for $\lambda \in \Lambda $ and $a_\lambda =0$ for all but finitely many $\lambda \in \Lambda$.
To show that $v=0$, we must show that $a_\lambda=0$ for all $\lambda \in \Lambda$. 
Suppose that there exists $\lambda \in \Lambda$ such that $a_\lambda \not= 0$.
 Let $C$ denote the set of natural numbers  $m$ such that 
  $\Lambda_m$ contains a partition $\lambda$
 with $a_\lambda \not=0$. The set $C$ is nonempty and finite. Let $n$ denote the maximal element of $C$.
 By construction $\sum_{\lambda \in \Lambda_n} a_\lambda \otimes Z_\lambda$ is nonzero. By Lemma \ref{lem:WWZ},
\begin{align}
\sum_{\lambda \in \Lambda_n} a_\lambda Z_\lambda \not=0.
\label{eq:nzer}
\end{align}
By construction
\begin{align}
\label{eq:kk}
0= \sum_{\lambda \in \Lambda} a_\lambda \tilde G_\lambda =
\sum_{k=0}^{n} \sum_{\lambda \in \Lambda_k} a_\lambda \tilde G_\lambda
=
 \sum_{\lambda \in \Lambda_n} a_\lambda \tilde G_\lambda + \sum_{k=0}^{n-1} \sum_{\lambda \in \Lambda_k} a_\lambda \tilde G_\lambda.
\end{align}
Using (\ref{eq:kk}),
\begin{align}
\label{eq:mainarg}
\sum_{\lambda \in \Lambda_n} a_\lambda Z_\lambda = 
\sum_{\lambda \in \Lambda_n} a_\lambda (Z_\lambda - \tilde G_\lambda) - \sum_{k=0}^{n-1} \sum_{\lambda \in \Lambda_k} a_\lambda \tilde G_\lambda.
\end{align}
The left-hand side of (\ref{eq:mainarg})  is contained in $\mathcal U_n$.
 By 
 Lemma \ref{lem:GGZ}
 the right-hand side of (\ref{eq:mainarg}) 
is contained in $\sum_{k=0}^{n-1} \mathcal U_k$.
The subspaces $\mathcal U_n$ and
$\sum_{k=0}^{n-1} \mathcal U_k$ have zero intersection because the sum $\sum_{k=0}^n \mathcal U_k$ is direct.
This contradicts (\ref{eq:nzer}), so $a_\lambda=0$ for $\lambda \in \Lambda$. Consequently $v=0$, as desired. We have shown that
 the multiplication
 map is injective. By the above comments the multiplication map is an isomorphism of vector spaces.
\end{proof}

\section{Acknowledgment} The author thanks Pascal Baseilhac for many conversations about $U^+_q$ and its central extension $\mathcal U^+_q$.
The author thanks Kazumasa Nomura for giving this paper a close reading and offering many valuable comments.

\section{Appendix A}

Recall the algebra $\mathcal U$ from Definition \ref{def:CE}. In
 this appendix we list some relations that hold in $\mathcal U$. We will define an algebra $\mathcal U^\vee$ that is a
homomorphic preimage of $\mathcal U$. All the results in this appendix are about $\mathcal U^\vee$.
\medskip

\noindent 
Define the algebra $\mathcal U^\vee$ by generators 
\begin{align*}
\lbrace W_{-k}\rbrace_{k\in \mathbb N}, \qquad  \lbrace  W_{k+1}\rbrace_{k\in \mathbb N},\qquad  
 \lbrace G_{k+1}\rbrace_{k\in \mathbb N},
\qquad
\lbrace {\tilde G}_{k+1}\rbrace_{k\in \mathbb N}
\end{align*}
 and the following relations. For $k \in \mathbb N$,
\begin{align}
&
 \lbrack W_0, W_{k+1}\rbrack= 
\lbrack W_{-k}, W_{1}\rbrack=(1-q^{-2})
(\tilde G_{k+1} -  G_{k+1}),
\label{eq:3p1App}
\\
&
\lbrack W_0, G_{k+1}\rbrack_q= 
\lbrack {\tilde G}_{k+1}, W_{0}\rbrack_q= 
(q-q^{-1}) W_{-k-1},
\label{eq:3p2App}
\\
&
\lbrack  G_{k+1},  W_{1}\rbrack_q= 
\lbrack  W_{1}, {\tilde G}_{k+1}\rbrack_q= 
(q-q^{-1}) W_{k+2},
\label{eq:3p3App}
\\
&
\lbrack  W_{0}, W_{-k}\rbrack=0,  \qquad 
\lbrack W_{1},  W_{k+1}\rbrack= 0.
\label{eq:3p4App}
\end{align}
\noindent For notational convenience, define $ G_0=1$ and ${\tilde G}_0=1$.
\medskip

\noindent For $\mathcal U^\vee$ we define the generating functions $W^-(t)$, $W^+(t)$, $G(t)$, ${\tilde G}(t)$ as in Definition \ref{def:gf}.
In terms of these generating functions, the relations \eqref{eq:3p1App}--\eqref{eq:3p4App} become the relations in Lemma \ref{lem:WWGen}.
 Let $s$ denote an indeterminate that commutes with $t$.
Define
\begin{align*}
A(s,t)&=\lbrack  W^-(s), W^-(t) \rbrack ,
\\
B(s,t)&=\lbrack W^+(s),  W^+(t) \rbrack,
\\
C(s,t)&=\lbrack  W^-(s), W^+(t) \rbrack 
+
\lbrack W^+(s), W^-(t) \rbrack,
\\
D(s,t)&=s \lbrack  W^-(s), G(t) \rbrack 
+
t \lbrack  G(s),  W^-(t) \rbrack ,
\\
E(s,t)&=s \lbrack  W^-(s), {\tilde G}(t) \rbrack 
+
t \lbrack  \tilde G(s),  W^-(t) \rbrack,
\\
F(s,t)&=s \lbrack   W^+(s),  G(t) \rbrack
+
t \lbrack   G(s), W^+(t) \rbrack,
\\
G(s,t)&=s \lbrack   W^+(s), {\tilde G}(t) \rbrack
+
t \lbrack \tilde G(s), W^+(t) \rbrack,
\\
H(s,t)&=\lbrack   G(s), G(t) \rbrack, 
\\
I(s,t)&=\lbrack  {\tilde G}(s),  {\tilde G}(t) \rbrack,
\\
J(s,t)&=\lbrack   {\tilde G}(s), G(t) \rbrack +
\lbrack   G(s), {\tilde G}(t) \rbrack
\end{align*}
and also
\begin{align*}
K(s,t)&=\lbrack W^-(s), G(t)\rbrack_q -
\lbrack  W^-(t), G(s)\rbrack_q,
\\
L(s,t)&=\lbrack G(s),  W^+(t)\rbrack_q -
\lbrack  G(t), W^+(s)\rbrack_q,
\\
M(s,t)&=\lbrack  {\tilde G}(s),  W^-(t)\rbrack_q -
\lbrack  {\tilde G}(t),  W^-(s)\rbrack_q,
\\
N(s,t)&=\lbrack W^+(s),  {\tilde G}(t)\rbrack_q -
\lbrack  W^+(t),  {\tilde G}(s)\rbrack_q,
\\
P(s,t)&=t^{-1}\lbrack G(s), {\tilde G}(t)\rbrack -
s^{-1}\lbrack  G(t), {\tilde G}(s)\rbrack -
q\lbrack W^-(t), W^+(s)\rbrack_q+
q\lbrack W^-(s), W^+(t)\rbrack_q,
\\
Q(s,t)&=t^{-1}\lbrack {\tilde G}(s),   G(t)\rbrack -
s^{-1}\lbrack   {\tilde G}(t),  G(s)\rbrack -
q \lbrack  W^+(t),  W^-(s)\rbrack_q+
q\lbrack W^+(s),  W^-(t)\rbrack_q,
\\
R(s,t)&=\lbrack G(s),  {\tilde G}(t)\rbrack_q -
\lbrack G(t), {\tilde G}(s)\rbrack_q -
qt\lbrack  W^-(t),  W^+(s)\rbrack+
qs\lbrack  W^-(s),  W^+(t)\rbrack,
\\
S(s,t)&=\lbrack  {\tilde G}(s),   G(t)\rbrack_q -
\lbrack {\tilde G}(t),   G(s)\rbrack_q -
qt \lbrack  W^+(t), W^-(s)\rbrack+
qs\lbrack  W^+(s),  W^-(t)\rbrack.
\end{align*}
By linear algebra,
\begin{align}
\label{eq:CextraA}
C(s,t) &= \frac{ (q+q^{-1})(P(s,t)+Q(s,t))-(s^{-1}+t^{-1})(R(s,t)+S(s,t))}{(q^2-s^{-1}t)(q^2 -st^{-1})q^{-1}},
\\
\label{eq:JextraA}
J(s,t) &= \frac{ (q+q^{-1})(R(s,t)+S(s,t))-(s+t)(P(s,t)+Q(s,t))}{(q^2-s^{-1}t)(q^2-st^{-1})q^{-2}}.
\end{align}
Using Lemma \ref{lem:WWGen} we routinely obtain
\begin{align*}
\lbrack W_0, A(s,t)\rbrack &=0,
 \qquad \qquad \qquad \qquad \qquad
\frac{ \lbrack W_0, B(s,t) \rbrack}{1-q^{-2}} = \frac{G(s,t) - F(s,t)}{st},
\\
\frac{ \lbrack W_0, C(s,t) \rbrack }{1-q^{-2}} &= \frac{E(s,t)-D(s,t)}{st},
\qquad \quad
\frac{\lbrack W_0,D(s,t) \rbrack_q }{q-q^{-1}} = (s+t)A(s,t),
\\
\frac{\lbrack E(s,t), W_0 \rbrack_q }{q-q^{-1}} &= (s+t)A(s,t),
\qquad \qquad 
\frac{\lbrack W_0, F(s,t) \rbrack_q }{1-q^{-2}} = S(s,t)-(q+q^{-1}) H(s,t),
\\
\frac{\lbrack G(s,t), W_0\rbrack_q }{1-q^{-2}} &= S(s,t)-(q+q^{-1})I(s,t),
\qquad 
\frac{\lbrack W_0, H(s,t) \rbrack_{q^2}}{q-q^{-1}} = K(s,t),
\\
\frac{\lbrack I(s,t), W_0 \rbrack_{q^2} }{q-q^{-1}} &= M(s,t),
\qquad 
 \qquad \qquad \qquad 
\frac{ \lbrack W_0, J(s,t) \rbrack }{q-q^{-1}} = M(s,t) - K(s,t)
\end{align*}
\noindent and 
\begin{align*}
\frac{ \lbrack W_0, K(s,t) \rbrack_q }{ q^2-q^{-2}} &= A(s,t),
\qquad \qquad 
\frac{\lbrack W_0, L(s,t) \rbrack_q }{q-q^{-1}} = P(s,t)-(s^{-1} + t^{-1})H(s,t),
\\
\frac{ \lbrack M(s,t), W_0 \rbrack_q }{ q^2-q^{-2}} &= A(s,t),
\qquad \qquad 
\frac{\lbrack N(s,t), W_0\rbrack_q }{q-q^{-1}} = Q(s,t)-(s^{-1} + t^{-1})I(s,t),
\\
\frac{ \lbrack P(s,t), W_0 \rbrack}{q-q^{-1}} &= (s^{-1} + t^{-1}) K(s,t) - (q+q^{-1})s^{-1} t^{-1} E(s,t),
\\
\frac{ \lbrack W_0, Q(s,t) \rbrack}{q-q^{-1}} &= (s^{-1} + t^{-1}) M(s,t) - (q+q^{-1})s^{-1} t^{-1} D(s,t),
\\
\frac{\lbrack W_0, R(s,t) \rbrack }{q-q^{-1}} &= (s^{-1} + t^{-1})(E(s,t)-D(s,t)),
\qquad 
\frac{ \lbrack W_0, S(s,t) \rbrack }{ q^2-q^{-2}} = M(s,t) -K(s,t)
\end{align*}
\noindent and
\begin{align*}
\frac{ \lbrack W_1, A(s,t) \rbrack}{1-q^{-2}} &= \frac{D(s,t) - E(s,t)}{st},
 \qquad \qquad \qquad 
\lbrack W_1, B(s,t)\rbrack =0,
\\
\frac{ \lbrack W_1, C(s,t) \rbrack }{1-q^{-2}} &= \frac{F(s,t)-G(s,t)}{st},
\qquad 
\frac{\lbrack D(s,t), W_1\rbrack_q }{1-q^{-2}} = R(s,t)-(q+q^{-1})H(s,t),
\\
\frac{\lbrack W_1, E(s,t) \rbrack_q }{1-q^{-2}} &= R(s,t)-(q+q^{-1}) I(s,t),
\qquad 
\frac{\lbrack F(s,t), W_1 \rbrack_q }{q-q^{-1}} = (s+t)B(s,t),
\\
\frac{\lbrack W_1,G(s,t) \rbrack_q }{q-q^{-1}} &= (s+t)B(s,t),
 \qquad \qquad \qquad \quad
\frac{\lbrack H(s,t), W_1 \rbrack_{q^2} }{q-q^{-1}} = L(s,t),
\\
\frac{\lbrack W_1, I(s,t) \rbrack_{q^2}}{q-q^{-1}} &= N(s,t),
 \qquad \qquad \qquad \qquad \quad
\frac{ \lbrack W_1, J(s,t) \rbrack }{q-q^{-1}} = L(s,t) - N(s,t)
\end{align*}
\noindent and 
\begin{align*}
\frac{\lbrack K(s,t), W_1\rbrack_q }{q-q^{-1}} &= P(s,t)-(s^{-1} + t^{-1})H(s,t),
\qquad \qquad 
\frac{ \lbrack L(s,t), W_1 \rbrack_q }{ q^2-q^{-2}} = B(s,t),
\\
\frac{\lbrack W_1, M(s,t) \rbrack_q }{q-q^{-1}} &= Q(s,t)-(s^{-1} + t^{-1})I(s,t),
\qquad \qquad 
\frac{ \lbrack W_1, N(s,t) \rbrack_q }{ q^2-q^{-2}} = B(s,t),
\\
\frac{ \lbrack W_1, P(s,t) \rbrack}{q-q^{-1}} &= (s^{-1} + t^{-1}) L(s,t) - (q+q^{-1})s^{-1} t^{-1} G(s,t),
\\
\frac{ \lbrack Q(s,t), W_1 \rbrack}{q-q^{-1}} &= (s^{-1} + t^{-1}) N(s,t) - (q+q^{-1})s^{-1} t^{-1} F(s,t),
\\
\frac{ \lbrack W_1, R(s,t) \rbrack }{ q^2-q^{-2}} &= L(s,t) -N(s,t),
 \qquad
\frac{\lbrack W_1, S(s,t) \rbrack }{q-q^{-1}} = (s^{-1} + t^{-1})(F(s,t)-G(s,t)).
\end{align*}

\noindent
We just listed 38 relations, including 
(\ref{eq:CextraA}), (\ref{eq:JextraA}). These 38 relations are 
called {\it canonical}.

\bigskip

\noindent Paul Terwilliger \hfil\break
\noindent Department of Mathematics \hfil\break
\noindent University of Wisconsin \hfil\break
\noindent 480 Lincoln Drive \hfil\break
\noindent Madison, WI 53706-1388 USA \hfil\break
\noindent email: {\tt terwilli@math.wisc.edu }\hfil\break


\begin{thebibliography}{10}


\bibitem{banIto}
E.~Bannai and T.~Ito.
\newblock
{\em Algebraic Combinatorics, I. Association schemes.}
\newblock Benjamin/Cummings, Menlo Park, CA, 1984.


 





\bibitem{bas2}
P.~Baseilhac.
\newblock An integrable structure related with tridiagonal
algebras.
\newblock {\em
Nuclear Phys. B}
 705
 (2005)
 605--619;
 {\tt arXiv:math-ph/0408025}.



\bibitem{bas1}
P.~Baseilhac.
\newblock Deformed {D}olan-{G}rady relations in quantum integrable
models.
\newblock {\em
Nuclear Phys. B}
 709
 (2005)
 491--521;
 {\tt arXiv:hep-th/0404149}.





\bibitem{baspp}
P.~Baseilhac.
\newblock The positive part of
$U_q(\widehat{\mathfrak{sl}}_2)$ and tensor product
representations;
\newblock {\em
preprint}. 

\bibitem{basFMA}
P.~Baseilhac.
\newblock The alternating presentation of $U_q(\widehat{gl_2})$ from  Freidel-Maillet algebras.
\newblock{\em  Nuclear Phys. B} 967 (2021) 115400;  {\tt arXiv:2011.01572}.



\bibitem{basXXZ}
P.~Baseilhac and S.~Belliard.
\newblock
The half-infinite XXZ chain in Onsager's approach.
\newblock
{\em
Nuclear Phys. B} 873 (2013) 550--584;
 {\tt arXiv:1211.6304}.



\bibitem{basBel}
P.~Baseilhac and S.~Belliard.
\newblock An attractive basis for the $q$-Onsager algebra;
      {\tt arXiv:1704.02950}.


\bibitem{BK05}
P.~Baseilhac and K.~Koizumi.
\newblock A new (in)finite dimensional algebra for
quantum integrable models.
\newblock {\em 
 Nuclear Phys. B}  720  (2005) 325--347;
   {\tt arXiv:math-ph/0503036}.



\bibitem{bas4}
 P.~Baseilhac and K.~Koizumi.
   \newblock
    A deformed analogue of Onsager's symmetry
      in the
       $XXZ$ open spin chain.
       \newblock {\em
        J. Stat. Mech. Theory Exp.} 
 2005,  no. 10, P10005, 15 pp. (electronic);
	  {\tt arXiv:hep-th/0507053}.

  \bibitem{basKoi}
    P.~Baseilhac and K.~Koizumi.
       \newblock
    Exact spectrum of the $XXZ$ open spin chain from the
          $q$-Onsager algebra representation theory.
          \newblock {\em  J. Stat. Mech. Theory Exp.}
       2007,  no. 9, P09006, 27 pp. (electronic);
      {\tt arXiv:hep-th/0703106}.


\bibitem{BK}
P.~Baseilhac and S.~Kolb.
\newblock Braid group action and root vectors for the $q$-Onsager algebra.
\newblock {\em Transform. Groups} 25 (2020) 363--389;
   {\tt arXiv:1706.08747}.

\bibitem{basnc}
 P.~Baseilhac and K.~Shigechi.
  \newblock
   A new current algebra and the reflection equation.
    \newblock{\em
     Lett. Math. Phys. }
       92
        (2010)   47--65;
{\tt arXiv:0906.1482v2}.

 
\bibitem{bockting}
S.~Bockting-Conrad.
\newblock
Tridiagonal pairs of $q$-Racah type,
the double lowering operator $\psi$, and the quantum algebra
$U_q(\mathfrak{sl}_2)$.
\newblock {\em
Linear Algebra Appl.} 445 (2014) 256--279; {\tt arXiv:1307.7410}.


\bibitem{BCN}
A.~E.~Brouwer,  A.~Cohen, A.~Neumaier.
\newblock{\em Distance-Regular Graphs.}
\newblock Springer-Verlag, Berlin, 1989.


\bibitem{charp}
V.~Chari and A.~Pressley.
\newblock {Q}uantum affine algebras.
\newblock {\em Commun. Math. Phys.}
142 (1991) 261--283. 

\bibitem{dkt}   
E.~R.~van Dam, J.~H.~Koolen, H.~Tanaka.
\newblock Distance-regular graphs.
\newblock{\em Electron. J. Combin.} (2016) DS22; {\tt arXiv:1410.6294}.

\bibitem{damiani}
I.~Damiani.
\newblock
A basis of type Poincare-Birkoff-Witt for the quantum
algebra of $\widehat{\mathfrak{sl}}_2$.
\newblock {\em
J. Algebra} 161 (1993) 291--310.




\bibitem{nonnil}
T.~Ito and P.~Terwilliger. 
\newblock
 Two non-nilpotent linear transformations that satisfy the cubic $q$-Serre relations.
 \newblock{\em 
 J. Algebra Appl.} 6 (2007) 477--503; {\tt arXiv:math/0508398}.


\bibitem{drg}
T.~Ito and P.~Terwilliger.
\newblock Distance-regular graphs and the 
$q$-tetrahedron algebra.
\newblock{\em European J. Combin.} {30} (2009)
682--697;
{\tt arXiv:math.CO/0608694}.



\bibitem{TD00}
T.~Ito, K.~Tanabe, and P.~Terwilliger.
\newblock Some algebra related to ${P}$- and ${Q}$-polynomial association
  schemes,  in:
\newblock {\em Codes and Association Schemes (Piscataway NJ, 1999)}, Amer.
Math. Soc., Providence RI, 2001, pp.
     167--192; 
{\tt arXiv:math.CO/0406556}.


\bibitem{JM}
M.~Jimbo and T.~Miwa.
\newblock
Algebraic analysis of solvable lattice models.
\newblock 
CBMS Regional Conference Series in Mathematics, 85. 
\newblock Published for the Conference Board of the Mathematical Sciences, Washington, DC; by the American Mathematical Society, Providence, RI, 1995.





\bibitem{wenLiu}
W.~Liu.
\newblock
 The attenuated space poset $A_q(M,N)$.
 \newblock{\em Linear Algebra Appl.}  506 (2016) 244--273; {\tt arXiv:1605.00625}.


\bibitem{lusztig}
G.~Lusztig.
\newblock
{\em
Introduction to quantum groups}.
\newblock
Progress in Mathematics, 110. Birkhauser, Boston, 1993.

\bibitem{miklavic}
S.~Miklavic and P.~Terwilliger. 
\newblock
Bipartite $Q$--polynomial distance-regular graphs and uniform posets.
\newblock{ \em J. Algebraic Combin.} 38 (2013)  225--242;
{\tt arXiv:1108.24849}.



\bibitem{NomTerTB}
K.~Nomura and P.~Terwilliger.
\newblock
Totally bipartite tridiagonal pairs. Preprint
\newblock
{\tt arXiv:1711.00332}.

\bibitem{rosso1}
M.~Rosso.
\newblock
Groupes quantiques et alg{\`e}bres de battage quantiques.
\newblock{\em
C.~R. Acad. Sci. Paris} 320 (1995) 145--148.

\bibitem{rosso}
M.~Rosso.
\newblock
Quantum groups and quantum shuffles.
\newblock{\em
Invent. Math} 133 (1998) 399--416.












\bibitem{uniform}
P.~Terwilliger.
\newblock The incidence algebra of a uniform poset, in
\newblock  {\em Coding Theory and Design Theory, Part I}, pp. 193--212.
\newblock  IMA Vol. Math. Appl. 20, Springer, New York, 1990.





\bibitem{z2z2z2}
P.~Terwilliger.
\newblock
An action of the free product
$\mathbb Z_2 \star \mathbb Z_2 \star \mathbb Z_2$ on the
$q$-Onsager algebra and its current algebra.
\newblock {\em
Nuclear Phys. B}
936 
 (2018)
 306--319;
{\tt arXiv:1808.09901}.



\bibitem{alternating}
P.~Terwilliger.
\newblock
The alternating PBW basis for the positive part
of $U_q(\widehat{\mathfrak{sl}}_2)$.
\newblock {\em J. Math. Phys.}
60 (2019)  071704;
{\tt arXiv:1902.00721}.


\bibitem{altCE}
P.~Terwilliger.
\newblock
The alternating central extension for the positive part of
$U_q(\widehat{\mathfrak{sl}}_2)$.
\newblock {\em
Nuclear Phys. B}
947
(2019)
114729;
{\tt arXiv:1907.09872}.

\bibitem{pbwqO}
P.~Terwilliger.
\newblock
The alternating central extension 
 of the $q$-Onsager algebra.
 \newblock Preprint {\tt arXiv:2103.03028}.



 \end{thebibliography}
\end{document}